%
\documentclass[12 pt]{amsart}

\usepackage{hyperref}
\usepackage{etex}
\usepackage[shortlabels]{enumitem}
\usepackage{amsmath}
\usepackage{amsxtra}
\usepackage{amscd}
\usepackage{amsthm}
\usepackage{amsfonts}
\usepackage{amssymb}
\usepackage{eucal}
\usepackage[all]{xy}
\usepackage{graphicx}
\usepackage{tikz-cd}
\usepackage{mathrsfs}
\usepackage{subfiles}
\usepackage{mathpazo}
\usepackage[colorinlistoftodos, textsize=tiny]{todonotes}
\usepackage{morefloats}
\usepackage{pdfpages}
\usepackage{thm-restate}
\usepackage[utf8]{inputenc}
\usepackage{epigraph}
\usepackage{csquotes}
\usepackage[margin=1.0in]{geometry}
\usepackage{stmaryrd}

\graphicspath{ {images/} }

\RequirePackage{color}
\definecolor{myred}{rgb}{0.75,0,0}
\definecolor{mygreen}{rgb}{0,0.5,0}
\definecolor{myblue}{rgb}{0,0,0.65}

\usepackage{hyperref}
\hypersetup{citecolor=blue}
\usepackage{tikz}
\usetikzlibrary{matrix,arrows,decorations.pathmorphing}

\theoremstyle{plain}
\newtheorem{theorem}{Theorem}[section]
\newtheorem{proposition}[theorem]{Proposition}
\newtheorem{lemma}[theorem]{Lemma}
\newtheorem{corollary}[theorem]{Corollary}
\theoremstyle{definition}
\newtheorem{definition}[theorem]{Definition}
\newtheorem{remark}[theorem]{Remark}
\newtheorem{example}[theorem]{Example}

\newtheorem{question}[theorem]{Question}

\theoremstyle{remark}
\newtheorem{notation}[theorem]{Notation}
\numberwithin{equation}{section}
  
\newcommand\nc{\newcommand}
\nc\on{\operatorname}
\nc\renc{\renewcommand}

\newcommand\bc{{\mathbb C}}
\newcommand\br{{\mathbb R}}

\newcommand\bp{{\mathbb P}}

\newcommand\bz{{\mathbb Z}}
\newcommand\ba{{\mathbb A}}
\newcommand\bg{{\mathbb G}}

\newcommand\scc{\mathscr C}

\newcommand\scj{\mathscr J}

\newcommand\scl{\mathscr L}

\newcommand\sco{\mathscr O}

\newcommand\scx{\mathscr X}
\newcommand\scy{\mathscr Y}

\newcommand \ra{\rightarrow}
\newcommand \xra{\xrightarrow}
\DeclareMathOperator\spec{\text{Spec}}

\newcommand*{\shom}{\mathscr{H}\kern -.5pt om}
\newcommand*{\stor}{\mathscr{T}\kern -.5pt or}
\newcommand*{\sext}{\mathscr{E}\kern -.5pt xt}
\newcommand \mg{{\mathscr M_g}}
\newcommand \mgc{{\mathscr M_g^c}}
\newcommand \hg{{\mathscr H_g}}
\newcommand \ag{{\mathscr A_g}}

\makeatletter
\newcommand{\customlabel}[2]{\protected@write \@auxout {}{\string \newlabel {#1}{{#2}{\thepage}{#2}{#1}{}} }\hypertarget{#1}{#2}}

\newcommand\hgb[1]{\mathscr H_{g,#1}} 
\newcommand\mgb[1]{\mathscr M_{g,#1}} 
\newcommand\agb[1]{\mathscr A_{g,#1}} 
\DeclareMathOperator\id{id}

\DeclareMathOperator\coker{coker}

\DeclareMathOperator\pic{Pic}

\DeclareMathOperator\spn{Span}
\DeclareMathOperator\im{im}

\DeclareMathOperator\sym{Sym}
\DeclareMathOperator\xsym{Sym_2}
\DeclareMathOperator\pgl{PGL}

\DeclareMathOperator\chr{\text{char}}

\newcommand\ul{\underline}
\newcommand\ol{\overline}

\DeclareMathOperator\isom{isom}

\DeclareMathOperator\aut{Aut}
\DeclareMathOperator\gl{GL}

\DeclareFontFamily{U}{wncy}{}
\DeclareFontShape{U}{wncy}{m}{n}{<->wncyr10}{}
\DeclareSymbolFont{mcy}{U}{wncy}{m}{n}
\DeclareMathSymbol{\Sha}{\mathord}{mcy}{"58}

\setcounter{MaxMatrixCols}{20}

\def\listtodoname{List of Todos}
\def\listoftodos{\@starttoc{tdo}\listtodoname}

\title{The Torelli map restricted to the hyperelliptic locus}
\author{Aaron Landesman}

\usepackage{microtype}
\begin{document}

\begin{abstract}
	Let $g \geq 2$ and let the Torelli map denote the map sending a genus $g$ curve to its principally polarized Jacobian.
	We show that the restriction of the Torelli map to the hyperelliptic locus is an immersion in characteristic not $2$.
	In characteristic $2$, we show the Torelli map restricted to the hyperelliptic locus 
	fails to be an immersion because it is generically inseparable;
	moreover, the induced map on tangent spaces has kernel of dimension $g-2$ at every point.
\end{abstract}

\maketitle

\section{Introduction}
Let $\hg$ over $\spec \bz$ denote the moduli stack of smooth hyperelliptic curves of genus $g$, $\mg$ over $\spec \bz$ denote the moduli stack of
smooth curves of genus $g$, and $\ag$ over $\spec \bz$ denote the moduli stack of principally polarized abelian varieties of dimension $g$.
Throughout, for $S$ a scheme, by a {\em curve of genus $g$ over $S$}, we mean a smooth proper morphism of schemes $f: C \to S$
of relative dimension $1$ whose fibers are geometrically connected $1$-dimensional schemes of arithmetic genus $g$.
For $R$ a ring, we use $\hgb R, \mgb R, \agb R$ to denote the base changes of $\hg, \mg$, and $\ag$ over $\spec \bz$ along $\spec R \ra \spec \bz$.
Throughout, we assume $g \geq 2$.

The main goal of this paper is to understand whether the composition
$\hg \xrightarrow{\iota_g} \mg \xra{\tau_g} \ag$ is an immersion, (i.e., a locally closed immersion,) for $\tau_g$ the Torelli map sending a curve to its principally polarized Jacobian.
Let $\phi_g: \hg \ra \ag$ denote this composition.
We use $\tau_{g,R}$ and $\phi_{g,R}$ for the base changes of $\tau_g$ and $\phi_g$ along a map $\spec R \ra \spec \bz$.
To start, we recall the classical characterization of when $\tau_g$ is injective on tangent vectors.
This follows from \protect{\cite[Theorems 2.6 and 2.7]{oortS:local-torelli-problem}}
together with the converse of
\protect{\cite[Theorem 2.7]{oortS:local-torelli-problem}}, which is easy to verify directly, see \autoref{lemma:dual-tangent-space-description}.
Also, see \cite[Theorem 4.3]{landesman:the-infinitesimal-torelli-problem}.

\begin{theorem}[\protect{\cite[Theorems 2.6 and 2.7]{oortS:local-torelli-problem}}]
	\label{theorem:torelli}
	Let $k$ be a field. For $g \geq 3$, and $[C] \in \mgb k$ a field-valued point, the Torelli map $\tau_{g,k}: \mgb k \ra \agb k$ is injective on tangent vectors at $[C]$
	if and only if $[C] \in \mgb k - \hgb k \subset \mgb k$.
	When $g = 2$, the map $\tau_{g,k}$ is injective on tangent vectors at all points $[C] \in \mgb k$.
\end{theorem}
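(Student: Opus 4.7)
The plan is to identify the codifferential of $\tau_{g,k}$ at $[C]$ with the canonical multiplication map on quadratic differentials, and then invoke a classical surjectivity theorem of M.~Noether. By standard deformation theory, $T_{[C]}\mgb k \cong H^1(C, T_C)$ and $T_{\tau_g([C])}\agb k \cong \sym^2 H^1(C, \sco_C)$; Serre duality yields the dual identifications $T^*_{[C]}\mgb k \cong H^0(C, \Omega_C^{\otimes 2})$ and $T^*_{\tau_g([C])}\agb k \cong \sym^2 H^0(C, \Omega_C)$. The central input, which is presumably the content of the forthcoming \autoref{lemma:dual-tangent-space-description}, is that under these identifications the codifferential of $\tau_{g,k}$ is exactly the multiplication map
\[
\mu_C \colon \sym^2 H^0(C, \Omega_C) \to H^0(C, \Omega_C^{\otimes 2}),
\]
which falls out of the description of $\tau_g$ on first-order deformations via cup-product with the Kodaira--Spencer class. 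Consequently, $d\tau_{g,k}$ is injective on tangent vectors at $[C]$ if and only if $\mu_C$ is surjective.

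For the ``if'' direction I would invoke the classical theorem of Max Noether, valid over any field: for a smooth projective curve $C$ of genus $g \geq 2$, the map $\mu_C$ is surjective whenever $g = 2$ or $C$ is non-hyperelliptic. In genus $2$ both source and target have dimension $3g-3 = 3$ and one checks directly that $\mu_C$ is an isomorphism; for non-hyperelliptic $C$ with $g \geq 3$, surjectivity is equivalent to projective normality of the canonical embedding.

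For the converse, assume $g \geq 3$ and $C$ is hyperelliptic with degree-$2$ map $\pi \colon C \to \bp^1$. Using the standard identifications $\pi_* \sco_C \cong \sco_{\bp^1} \oplus \sco_{\bp^1}(-(g+1))$ and $\Omega_C \cong \pi^* \sco_{\bp^1}(g-1)$, the projection formula gives a canonical decomposition
\[
H^0(C, \Omega_C^{\otimes 2}) \cong H^0(\bp^1, \sco_{\bp^1}(2g-2)) \oplus H^0(\bp^1, \sco_{\bp^1}(g-3)),
\]
with dimensions $(2g-1) + (g-2) = 3g-3$. Since $H^0(C, \Omega_C) = \pi^* H^0(\bp^1, \sco_{\bp^1}(g-1))$, every product $\omega_1 \omega_2$ in the image of $\mu_C$ equals $\pi^*(\eta_1 \eta_2)$ for $\eta_i \in H^0(\bp^1, \sco_{\bp^1}(g-1))$ and so lies in the first summand. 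That summand is a proper $(2g-1)$-dimensional subspace of the $(3g-3)$-dimensional target for $g \geq 3$, so $\mu_C$ is not surjective, completing the converse.

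The main technical obstacle is doing this uniformly across characteristics, particularly in characteristic $2$ where $\pi$ is of Artin--Schreier type and the decomposition of $\pi_* \sco_C$ must be handled with care. The essential geometric point survives, however: the hyperelliptic series factors the canonical morphism $C \to \bp^{g-1}$ through a rational normal curve of degree $g-1$, and this forces the image of $\mu_C$ to lie in the $(2g-1)$-dimensional space of sections pulled back from $\bp^1$. This is the only place where separate treatment of the two characteristic cases is needed, and once the identification of $d^*\tau_{g,k}$ with $\mu_C$ is verified characteristic-freely via the lemma cited above, the rest of the argument goes through as stated.
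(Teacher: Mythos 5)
Your proposal follows essentially the same route as the paper: identify the codifferential of $\tau_{g,k}$ with the multiplication map $\sym^2 H^0(C,\omega_C) \to H^0(C,\omega_C^{\otimes 2})$ via deformation theory and Serre duality, invoke Noether's theorem for surjectivity in the non-hyperelliptic case, and observe the $(2g-1)$-dimensional image for hyperelliptic curves via the factorization through $\bp^1$.

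One imprecision worth flagging: you write $T_{\tau_g([C])}\agb k \cong \sym^2 H^1(C,\sco_C)$, but in characteristic $2$ this is not the right object — the tangent space is the \emph{kernel} of $H^1 \otimes H^1 \to \wedge^2 H^1$, which is the divided power square $\Gamma^2 H^1(C,\sco_C)$, not the symmetric-product quotient $\sym^2$. The paper makes exactly this distinction (with its $\xsym$ notation) and observes that the duality $(\Gamma^2 V)^\vee \cong \sym^2(V^\vee)$ is what makes the cotangent identification with $\sym^2 H^0(C,\omega_C)$ work in all characteristics. Your final conclusion about the cotangent side happens to come out right, but the intermediate statement about the tangent side is wrong in characteristic $2$. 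A minor second quibble: surjectivity of the degree-two piece of the canonical ring map is weaker than projective normality of the canonical embedding, so "equivalent to projective normality" overstates; Noether's theorem is exactly the degree-two statement and is what you need. Your explicit use of the projection formula and $\pi_*\sco_C \cong \sco_{\bp^1} \oplus \sco_{\bp^1}(-(g+1))$ to compute the image in the hyperelliptic case is a nice concrete supplement to the paper's more compressed citation of Noether's theorem, and your observation that this decomposition survives in characteristic $2$ is correct (the paper verifies $f_*\sco_C \cong \sco_{\bp^1}(-g-1)\oplus \sco_{\bp^1}$ characteristic-freely in its Lemma 2.4).
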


Moreover, away from characteristic $2$,
the precise scheme theoretic fiber of $\tau_g$ over geometric points corresponding to hyperelliptic 
Jacobians is computed in \cite[p. 7]{ricolfi:hilbert-scheme-hyperelliptic}.
As a consequence, \autoref{theorem:torelli} shows that $\tau_g$ is not even a monomorphism at points of $\hg$ when $g \geq 3$.
It is therefore natural to ask whether the restriction $\tau_g|_{\hg} = \phi_g : \hg \ra \ag$ is a monomorphism.
Our main theorem answers this question.
We say a morphism of algebraic stacks is a {\em radimmersion} if it factors as the composition
of a finite radicial morphism and an open immersion, see \autoref{definition:radimmersion}.
\begin{theorem}
	\label{theorem:hyperelliptic}
	For $g \geq 2$, the map $\phi_g: \hg \ra \ag$ over $\spec \bz$ is a radimmersion.
	Additionally, $\phi_2$ is an immersion and $\phi_{g, \bz[1/2]}: \hgb {\bz[1/2]} \ra \agb {\bz[1/2]}$ is an immersion.
	However, when $g > 2$, for $k$ a field of characteristic $2$, $\phi_{g, k}$
	is not an immersion; instead, $\phi_{g, k}$ is generically inseparable and the induced
	map on tangent spaces at any geometric point of $\hgb k$ has kernel of dimension $g-2$. 
\end{theorem}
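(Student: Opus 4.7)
The plan is to factor $\phi_g$ as $\hg \xrightarrow{f} Y \xrightarrow{j} \ag$ with $f$ finite radicial and $j$ an open immersion, which gives the radimmersion, and then separately check when $f$ is also unramified (equivalent to $\phi_g$ being an immersion). Taking $Y = \ag$ itself, it suffices to show $\phi_g$ is finite and radicial. Representability of $\phi_g$ follows from injectivity of $\aut(C) \to \aut(\operatorname{Jac} C, \Theta)$, which is the classical Torelli theorem with automorphisms (for a hyperelliptic curve the hyperelliptic involution maps to $[-1]$, and the map is injective in general). Being radicial is the Torelli theorem: the principally polarized Jacobian $\operatorname{Jac} C$ determines $C$ over any field, and this statement is stable under arbitrary base change. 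Quasi-finiteness is immediate. For properness I would use the valuative criterion via the good reduction theorem: given a hyperelliptic curve $C_K$ over the fraction field of a DVR $R$ whose Jacobian extends to an abelian scheme over $R$, $C_K$ extends to a smooth proper curve $\mathcal C/R$, and its hyperelliptic involution $\sigma_K$ extends uniquely to $\mathcal C$, so $\mathcal C/R$ is hyperelliptic. Thus $\phi_g$ is finite, radicial, and representable, giving the radimmersion factorization for every $g \geq 2$.

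To decide when $\phi_g$ is moreover an immersion, I would compute the differential $d\phi_g$ at a geometric point $[C]$. Identifying $T_{[C]}\hg = H^1(C, T_C)^\sigma$ and $T_{[\operatorname{Jac} C]} \ag = \sym^2 H^1(C, \sco_C)$, the dual of $d\phi_g$ is the multiplication map
\[
\mu : \sym^2 H^0(C, \omega_C) \to H^0(C, \omega_C^{\otimes 2})^\sigma.
\]
Since a finite radicial morphism of algebraic stacks is a closed immersion exactly when it is also unramified, the immersion statement reduces to surjectivity of $\mu$ at every geometric point. In characteristic not $2$, $\sigma$ acts as $-1$ on $H^0(\omega_C)$, so $\sym^2 H^0(\omega_C)$ is $\sigma$-invariant, and in the basis $\{x^i \, dx/y\}_{i=0}^{g-1}$ from an affine model $y^2 = f(x)$, the image of $\mu$ is spanned by $\{x^i (dx/y)^2\}_{i=0}^{2g-2}$, which is all of $H^0(\omega_C^{\otimes 2})^\sigma$. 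Hence $\phi_g$ is an immersion in characteristic not $2$. The $g = 2$ case is handled in every characteristic by \autoref{theorem:torelli}, which asserts that $d\tau_2$ is injective at every point, so $d\phi_2$ is injective and $\phi_2$ is an immersion.

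The main obstacle is the characteristic $2$ tangent computation for $g > 2$. I would work in an Artin-Schreier model $y^2 + h(x) y = f(x)$ with $\deg h = g+1$, where the hyperelliptic involution is $\sigma : y \mapsto y + h$. Because the standard basis $\{x^i \, dx/h(x)\}_{i=0}^{g-1}$ of $H^0(\omega_C)$ does not involve $y$, $\sigma$ acts trivially on $H^0(\omega_C)$; this changes the analysis drastically, because $\sym^2 H^0(\omega_C) \to H^0(\omega_C^{\otimes 2})^\sigma$ can no longer surject on the earlier dimension grounds. I would produce an explicit basis of $H^0(\omega_C^{\otimes 2})^\sigma$, whose $\sigma$-invariants include $y$-linear terms controlled by the unipotent action of $\sigma$ on $H^0(\omega_C^{\otimes 2})$, then compute the image of $\mu$ and show the cokernel has dimension exactly $g - 2$ uniformly over all hyperelliptic points. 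Dualizing yields a $(g-2)$-dimensional kernel of $d\phi_g$ at every geometric point, so $\phi_g$ is nowhere unramified in characteristic $2$ for $g > 2$; combined with the radicial property, this forces $\phi_g$ to be generically inseparable, completing all three statements of the theorem.
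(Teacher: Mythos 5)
Your proposed radimmersion factorization is incorrect: you take $Y = \ag$ itself and claim $\phi_g$ is finite, but $\phi_g$ is not proper (hence not finite), so this factorization cannot work. The step that fails is the ``good reduction'' argument. If $C_K$ is a hyperelliptic curve over $\operatorname{Frac}(R)$ whose Jacobian extends to an abelian scheme over $R$, it does \emph{not} follow that $C_K$ extends to a smooth proper curve over $R$. What is true (Deligne--Mumford plus the theory of N\'eron models) is that the stable model $\mathcal C/R$ of $C_K$ is of \emph{compact type}; compact type curves with singular fibers have abelian Jacobians, and this happens generically along the boundary of $\hg$. Concretely, take a hyperelliptic curve degenerating to two lower-genus hyperelliptic curves glued at a Weierstrass point: the Jacobian limit is a product of abelian varieties, hence lies in $\ag$, but the limit curve is not in $\hg$. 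So the image of $\phi_g$ is not closed in $\ag$, and no factorization with $U = \ag$ exists. This is precisely why the paper passes through the compactified Torelli map $\tau_g^c : \mgc \to \ag$ and formulates a valuative criterion for radimmersions that allows an extension $T' \to T$ of the trait; properness is used only for the \emph{compactified} map $\mgc \to \ag$, and one must separately argue (using that theta divisors of singular compact type curves are reducible, hence the limit curve is forced to be smooth) that the lift lands back in $\hg$.

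There is also a smaller issue in your tangent-space setup: the identification $T_{[\operatorname{Jac}C]}\ag \simeq \sym^2 H^1(C,\sco_C)$ fails in characteristic $2$; the correct cotangent space is $\sym^2 H^0(C,\omega_C)$ with the tangent space being its divided-power dual $\xsym H^1(C,\sco_C)$, as the paper is careful to spell out. More importantly, your claim that $T_{[C]}\hg$ is identified with $H^1(C,T_C)^\sigma$ and that the dual of $d\phi_g$ is multiplication into $H^0(\omega_C^{\otimes 2})^\sigma$ would also need justification, and it is not obviously the right description in characteristic $2$, where $\sigma$ acts unipotently on cohomology. The paper instead computes $T_{[C]}\hg$ via the deformation theory of pairs $(C, \scl)$ (the Arbarello--Cornalba--Griffiths maps $\mu_0$, $\mu_1$), which works uniformly in all characteristics once one uses the Hurwitz-stack definition of $\hg$. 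Your characteristic-$2$ computation is therefore a plan rather than a proof; the paper carries it out using the normal form $y^2 - y = f$ and an explicit basis of differentials $dx/(x - a_i)$, from which the $(g-2)$-dimensional cokernel is extracted directly.
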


We carry out the proof of \autoref{theorem:hyperelliptic} at the end of \autoref{section:proof-hyperelliptic}.
To paraphrase the statement, \autoref{theorem:torelli} says, loosely speaking, that there are many tangent vectors to a hyperelliptic point in $\mg$ that are killed under $\tau_g$. 
We wish to understand whether those tangent vectors which are killed can lie inside $\hg$, or whether they correspond to deformations to non-hyperelliptic curves.
The answer, provided by \autoref{theorem:hyperelliptic}, is that they do all correspond to deformations to non-hyperelliptic curves when the characteristic is not $2$,
but this fails quite badly in characteristic $2$.

\begin{remark}
	\label{remark:}
	The statement that $\phi_{g,k}$ is an immersion for a field $k$ of characteristic not $2$ appears in \cite[p. 176]{oortS:local-torelli-problem},
	though some details are omitted. Our guess is that the authors verified the map $\phi_{g,k}$ is injective on tangent spaces via explicitly calculating the Kodaira Spencer map sending a differential
	to a corresponding deformation, in order to understand the image of the map $T_{[C]} \hgb k \ra T_{[C]} \mgb k \simeq H^1(C, T_C)$ on tangent spaces at a point $[C] \in \hgb k$.
	However, no indication is given there as to how to use this to show the map of stacks is an immersion.
	In this article we take a different approach, which also applies in characteristic $2$;
	we do not see how to carry out the approach of \cite{oortS:local-torelli-problem} in characteristic $2$.
\end{remark}
\begin{remark}
	\label{remark:}
	Our initial interest in this problem was motivated by the computation of the essential dimension of level structure covers of $\hg$.
	Using \autoref{theorem:hyperelliptic}, it is shown in \cite[Example 2.3.6]{farbKW:modular-functions-and-resolvent-problems}
	that for $g \geq 2$, the cover of $\hg$ parameterizing pairs of a hyperelliptic curve and an $n$-torsion point of the Jacobian of that curve
	(over a field of characteristic $0$)
	has essential dimension $2g-1$ when $2 \nmid n$ but only has essential dimension $g + 1$ when $n = 2$.
\end{remark}

There are two main components to the proof of \autoref{theorem:hyperelliptic}.
The first component of the proof is to describe the map $\phi_g$ induces on tangent vectors.
This is done by analyzing the deformation theory of hyperelliptic curves, which is possible by means of their relatively simple equations.
The key tool to analyzing the induced map on tangent spaces is \autoref{lemma:dual-tangent-space-description}, which relies on a nonstandard definition of $\hg$
given in \autoref{subsection:hg}.
The second component of the proof is to very that $\phi_g$ is a radimmersion.
This will imply $\phi_g$ is an immersion when it is a monomorphism, i.e., away from characteristic $2$.
For checking $\phi_g$ is a radimmersion, we use a valuative criterion, 
which roughly says that a map of stacks $f: X \ra Y$ is a radimmersion
when, given a map from the spectrum a discrete valuation ring to $Y$ with its two points factoring through $X$, the map from the 
spectrum of the discrete valuation ring factors uniquely through $X$.
We verify this valuative criterion using that $\phi_g$ factors as the composition of an immersion into the 
moduli stack of compact type curves $\hg \ra \mgc$, and a proper ``compactified Torelli map'' $\mgc \ra \ag$.

The outline of the paper is as follows:
In \autoref{section:background} we recall background on the infinitesimal Torelli theorem and
the moduli stack of hyperelliptic curves.
In \autoref{section:general-setup} we describe the map $\phi_g$ induces on tangent spaces.
We then compute this map on tangent spaces when the characteristic is not $2$ in \autoref{section:char-not-2}
and when the characteristic is $2$ in \autoref{section:char-2}.
After some preliminaries on the compactified Torelli map, we prove \autoref{theorem:hyperelliptic} in \autoref{section:proof-hyperelliptic}.
Finally, in \autoref{section:valuative}, we prove a valuative criterion for immersions of stacks, which is used in the proof
of \autoref{theorem:hyperelliptic}.

\subsection{Acknowledgements}
We would like to thank Bogdan Zavyalov for many helpful discussions.
We thank Rachel Pries for asking the question which led to the genesis of this article, and for much help understanding hyperelliptic curves
in characteristic $2$.
We thank an anonymous referee for numerous helpful comments and suggestions.
We also thank Benson Farb, Mark Kisin, and Jesse Wolfson for helpful correspondence and encouraging us to write this paper.
We thank Brian Conrad for the idea to use the valuative criterion for immersions.
We thank Valery Alexeev, Sean Cotner, Johan de Jong, Martin Olsson, Frans Oort, Bjorn Poonen, Will Sawin, and Ravi Vakil for further helpful discussions.
This material is based upon work supported by the National Science Foundation Graduate Research Fellowship Program under Grant No. DGE-1656518.

\section{Background}
\label{section:background}
In this section, we review relevant background notation we will need from \autoref{theorem:torelli} in \autoref{subsection:inputs} 
and also
a nonstandard construction of the moduli stack of hyperelliptic 
curves which will be crucial to the ensuing proof.
We define the stack $\hg$ of genus $g$ hyperelliptic curves in
\autoref{subsection:hg}.
We show $\hg$ is a smooth algebraic stack in \autoref{subsection:algebraic-stack}.
Finally, in 
\autoref{subsection:closed-immersion},
we show $\hg$ has a closed immersion into $\mg$.

\subsection{Key inputs in the proof of the infinitesimal Torelli theorem}
\label{subsection:inputs}

We next review the key inputs in the proof of \autoref{theorem:torelli}, as we will rely on understanding
explicitly the map on tangent spaces associated to $\tau_g: \mg \ra \ag$ in our ensuing analysis of the map $\hg \ra \ag$.

The statement regarding injectivity on geometric points is the classical Torelli theorem 
\cite[Chapter VII, Theorem 12.1(a)]{CornellS:Arithmetic},
see also the original proof by Torelli \cite{torelli:sulle-varieta-di-jacobi} and Andreotti's beautiful proof \cite{andreotti:on-a-theorem-of-torelli}.
Thus, we just address the statement on tangent vectors.
	This too is classical, and boils down to Noether's theorem, regarding the map 
	\eqref{equation:torelli-dual-map} below,
	though is perhaps
	less well known.

	Let $k$ be a field and let $[C] \in \mgb k$ be a field valued point of $\mgb k$ corresponding to the curve $C$.
	We'd like to understand whether the map
	\begin{align*}
		T_{[C]} \mgb k \ra T_{[\tau_{g,k}(C)]} \agb k
	\end{align*}
	is injective, for $T_{[C]} \mgb k$ denoting the tangent space to $\mgb k$ at $[C]$.
	By deformation theory, $T_{[C]} \mgb k \simeq H^1(C, T_C)$.
	For $V$ a vector space, define $\xsym V$ as the kernel of $V^{\otimes 2} \ra \wedge^2 V$, where $\wedge^2 V = V^{\otimes 2}/ \spn(v\otimes v : v \in V)$.
	(Note that $\xsym V \simeq \sym^2 V$ in characteristic $p \neq 2$, but differs in characteristic $2$. Here $\sym^2 V$ denotes the natural quotient of $V \otimes V$ by the span of $v \otimes w - w \otimes v$ for $v, w \in V$.)
	Further,
	\begin{align*}
		T_{[\tau_{g}(C)]}\agb k &= \ker(H^1(\tau_g(C), \sco_{\tau_g(C)})^{\otimes 2} \ra \wedge^2 H^1(\tau_g(C), \sco_{\tau_g(C)}) ) \\
	&\simeq \xsym H^1(C, \sco_C).
	\end{align*}
	as described in the proof of \cite[Theorem 2.6]{oortS:local-torelli-problem}.
	Also see \cite[Theorem 3.3.11(iii)]{sernesi:deformations-of-algebraic-schemes} for this identification.

	Therefore, we wish to understand whether the natural map
	\begin{align}
		\label{equation:torelli-tangent-map}
		H^1(C, T_C) \ra \xsym H^1(C, \sco_C),
	\end{align}
	induced by $T_{[C]}\tau_{g,k}: T_{[C]} \mgb k \ra T_{[\tau_g(C)]}\agb k$, is injective.

	Applying Serre duality,
	since $H^1(C, T_C)$ is dual to $H^0(C, \omega_C^{\otimes 2})$,	
	it is equivalent to understand surjectivity of the corresponding
	map
	\begin{align}
		\label{equation:torelli-dual-map}
		H^0(C, \omega_C^{\otimes 2}) \leftarrow \sym^2 H^0(C, \omega_C).
	\end{align}
	This duality, valid even in characteristic $2$, uses that, for $V$ a finite dimensional vector space, 
	$\xsym V^\vee$ can be viewed as the second graded piece of the algebra $\sym_\bullet V^\vee$ with its divided power structure that is naturally dual to $\sym^\bullet V$.
	The map \eqref{equation:torelli-dual-map} is explicitly the map given by multiplying two sections, 
	see
	\cite[Theorem 4.3]{landesman:the-infinitesimal-torelli-problem} and \cite[Theorem 2.6]{oortS:local-torelli-problem}.
	By Noether's theorem \cite[Theorem 2.10]{saint-donat:on-petris-analysis-of-the-linear-system-of-quadrics}
	the map
	\eqref{equation:torelli-dual-map}
	is surjective when $C$ is not hyperelliptic
	and fails to be surjective when $C$ is hyperelliptic.
	See \autoref{remark:hyperelliptic-image-char-0} and \autoref{remark:hyperelliptic-image-char-2} for an explicit description of the image of \eqref{equation:torelli-dual-map}
	in the
	hyperelliptic case.

\subsection{Definition of $\hg$}
\label{subsection:hg}
There are several different definitions of $\hg$, the moduli stack of hyperelliptic curves of genus $g$, in the literature. For the purposes of this paper, we will be especially concerned with the more delicate case when $2$ is not invertible
on the base, so let us now expand a bit on the definition of $\hg$ over $\spec \bz$ we employ.
We will essentially define $\hg$ as the Hurwitz stack of degree $2$ covers of a genus $0$ curve.
We assume $g \geq 2$.
For the next definition,
recall that a map $\phi : X \to Y$ is {\em locally free of degree $d$} if $\phi_* \sco_X$ is a locally free rank $d$ sheaf on $Y$, 
or equivalently $\phi$ is a degree $2$ finite map which is flat and of finite presentation
\cite[\href{https://stacks.math.columbia.edu/tag/02KB}{Tag 02KB}]{stacks-project}.

\begin{definition}
	\label{definition:hg}
	Suppose $g \in \bz$ and $g \geq 2$.
	Define the $\hg$, {\em the stack of hyperelliptic genus $g$ curves}
	as the category fibered in groupoids over schemes, whose fiber over a scheme $B$
	corresponds tuples $(B,C,f,P,h)$
	where $C$ is a smooth proper curve of genus $g$ over $B$ with geometrically connected fibers,
	$h: P \to B$ is a smooth proper curve of genus $0$ over $B$ with geometrically connected fibers,
	and $f: C \to P$
	is a degree $2$ locally free morphism. 
	Morphisms $(B,C,f,P,h) \to (B',C',f',P',h')$ are morphisms $t: B \to B', r: C \to C', s: P \to P'$ making all squares in the diagram
	\begin{equation}
		\label{equation:}
		\begin{tikzcd} 
			C \ar {r}{r} \ar {d}{f} & C' \ar {d}{f'} \\
			P \ar {r}{s} \ar{d}{h} & P' \ar{d}{h'} \\
			B \ar{r}{t} & B'
	\end{tikzcd}\end{equation}
	fiber squares.
\end{definition}
Note that $\hg$ as defined above is indeed a stack because the $\isom$ presheaf between any two points is a sheaf and 
descent data for $(B,C,f,P,h)$ is effective; the effectivity of descent data holds because descent data for $C$ and $(P,h)$ are separately effective, as is descent data for the morphism $f$.

\begin{remark}
	\label{remark:}
	We note that hyperelliptic curves over fields as defined in
	\autoref{definition:hg}
	may fail to have a map to $\bp^1$, as they may have a map to a genus $0$ curve which is not isomorphic to $\bp^1$.
	However, over an algebraically closed field, any hyperelliptic curve has a map to $\bp^1$.
\end{remark}

\subsection{Showing $\hg$ is an algebraic stack}
\label{subsection:algebraic-stack}

To show $\hg$ is an algebraic stack, we will construct a smooth cover by a scheme.
This scheme will be an open subscheme of a certain linear system on a Hirzebruch surface,
which we now define.
\begin{notation}
	\label{notation:hirzebruch}
	For $n \in \bz_{\geq 0}$, let $\mathbb F_n := \bp\left( \sco_{\bp^1_\bz}(-n) \oplus \sco_{\bp^1_\bz} \right)$
denote the $n$th Hirzebruch surface over $\spec \bz$.
Let $E \subset \mathbb F_n$ denote the ``directrix'' section of $\mathbb F_n \to \mathbb P^1$ corresponding to the surjection 
$\sco_{\bp^1_\bz}(-n) \oplus \sco_{\bp^1_\bz} \to \sco_{\bp^1_\bz}(-n)$
and let $F$ denote a fiber of the map $\mathbb F_n \to \mathbb P^1$.
Letting $\ul{G}$ denote the constant group scheme associated to a group $G$, 
we claim there is an isomorphism $\ul{\bz \oplus \bz} \to \pic_{\mathbb F_n/\bz}$
sending $(1,0) \mapsto \sco(F)$ and $(0,1) \mapsto \sco(E)$. 
Indeed,
this is an isomorphism by the fibral isomorphism criterion 
\cite[17.9.5]{EGAIV.4}
because it induces an isomorphism over every geometric point of $\spec \bz$.
The intersection pairing on $\mathbb F_n$ satisfies $E \cdot E = -n, E \cdot F = 1, F \cdot F = 0$,
see \cite[Proposition IV.1]{beauville:complex-algebraic-surfaces}.
\end{notation}

To start, we show that every hyperelliptic curve of genus $g$ has a canonical immersion into a Hirzebruch surface, and lies in 
a particular linear system. 
This will allow us to check that the schematic cover of $\hg$ we construct maps surjectively to $\hg$.
\begin{lemma}
	\label{lemma:hg-cover}
	Any hyperelliptic curve $C$ of genus $g \geq 2$ over an algebraically closed field $k$ is a closed subscheme of
	$\mathbb F_{g+1}$ in the linear system $\sco(2E + (2g+2)F)$.
	Further, any curve in the linear system associated to 
	$\sco(2E + (2g+2)F)$ has genus $g$.
\end{lemma}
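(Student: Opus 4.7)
My plan is to realize the double cover $f: C \to P$ (identifying $P \cong \bp^1_k$, possible since $k$ is algebraically closed) as an effective divisor on $\mathbb{F}_{g+1}$, using the structure of $\sca := f_*\sco_C$ as a rank-$2$ $\sco_{\bp^1}$-algebra. First I would invoke Grothendieck's splitting theorem to write $\sca \cong \sco_{\bp^1}(a) \oplus \sco_{\bp^1}(b)$ with $a \geq b$; the conditions $h^0(\sca) = h^0(\sco_C) = 1$ and the existence of the nonzero unit $\sco_{\bp^1} \hookrightarrow \sca$ force $a = 0$ and $b \leq -1$, and then comparing $\chi(\sca) = b+2$ to $\chi(\sco_C) = 1-g$ gives $b = -(g+1)$. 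Setting $L := \sco_{\bp^1}(g+1)$, this yields $\sca \cong \sco \oplus L^{-1}$.

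Next I would use the $\sco$-algebra structure on $\sca$ to present it as a quotient of $\sym^\bullet L^{-1}$ by a degree-$2$ relation, realizing $C = \underline{\spec}_{\bp^1}(\sca)$ as a closed subscheme of the total space $\mathbb{V}(L) := \underline{\spec}_{\bp^1}(\sym^\bullet L^{-1})$. Since $\mathbb{F}_{g+1} = \bp(\sco(-g-1) \oplus \sco) = \bp(L^{-1} \oplus \sco)$ contains $\mathbb{V}(L)$ as the open complement of one of its two natural sections, a relative-Euler-sequence computation of self-intersections will identify this complementary section as the directrix $E$, producing a closed immersion $C \hookrightarrow \mathbb{F}_{g+1}$ with $C \cap E = \emptyset$. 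Writing $[C] = \alpha E + \beta F$ in $\pic(\mathbb{F}_{g+1})$, the intersection numbers $[C] \cdot F = 2$ (degree of $f$) and $[C] \cdot E = -\alpha(g+1) + \beta = 0$ (from disjointness) then force $\alpha = 2$ and $\beta = 2g+2$.

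For the converse assertion, I would apply adjunction with $K_{\mathbb{F}_{g+1}} = -2E - (g+3)F$: any $C' \in |2E + (2g+2)F|$ satisfies
\[
2p_a(C') - 2 \;=\; (2E + (2g+2)F) \cdot ((g-1)F) \;=\; 2(g-1),
\]
so $p_a(C') = g$. The principal obstacle I expect is the convention tracking in the embedding step --- one must verify carefully that $\mathbb{V}(L) \subset \mathbb{F}_{g+1}$ is the complement of the \emph{directrix} (the section with self-intersection $-(g+1)$), not the complement of the other natural section, in order to reach $[C] \cdot E = 0$. Once that identification is pinned down, everything else reduces to routine intersection theory on a Hirzebruch surface.
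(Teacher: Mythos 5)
Your proof is correct, and it differs from the paper's argument in two substantive places. The paper computes the splitting type of $f_*\sco_C$ by evaluating $h^0$ of twists by $\sco_{\bp^1}(m)$ for $m \gg 0$, whereas you invoke Grothendieck splitting together with an Euler characteristic count; these are equivalent in content but your bookkeeping is a bit leaner. The more interesting divergence is in pinning down the class of $C$: after reducing to $\alpha = 2$ via $C \cdot F = 2$ (note the paper's statement ``$C \cdot E$ has degree $2$'' is a typo for $C \cdot F$), the paper computes $\beta$ using the adjunction formula $2g-2 = (K + C)\cdot C$, while you extract $\beta$ from the disjointness $C \cap E = \emptyset$, which you obtain by identifying the complement of $\mathbb V(L)$ inside $\mathbb F_{g+1} = \bp(L^{-1}\oplus\sco)$ with the directrix $E$. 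Your identification is correct: the zero section of $\mathbb V(L)$ has normal bundle $L$, hence self-intersection $g+1$, so the complementary section has self-intersection $-(g+1)$ and is the directrix. (The disjointness is also visible in the paper's framing, since the unit section $\sco_{\bp^1} \hookrightarrow f_*\sco_C$ is nonvanishing, so the fiberwise quotient defining $C \to \bp(f_*\sco_C)$ never factors through $\sco(-g-1)$.) Your approach then uses adjunction only for the converse genus count, whereas the paper uses it in the forward direction and falls back on Riemann--Roch or a flat-family argument for the converse. Both are valid; your route gives a slightly more unified picture via the disjointness from $E$, at the modest cost of the self-intersection computation needed to identify which complementary section is $E$ --- a point you correctly flag as the place where the convention must be tracked carefully.
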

\begin{proof}
	Given a map $f: C \to \bp^1$ over $k$
	the surjective adjunction map $f^* f_* \sco_C \to \sco_C$ induces an map
	$C \to \bp \left( f_* \sco_C \right)$.
	This map is an immersion, as can be verified on fibers over $\bp^1$.
	We claim $f_* \sco_C \simeq \sco_{\bp^1}(-g-1) \oplus \sco_{\bp^1}$.
	We know $f_* \sco_C$ splits as a direct sum of line bundles because is a locally free sheaf on $\bp^1$.
	Because $1 = h^0(C, \sco_C) = h^0(\bp^1, f_* \sco_C)$, one of the summands of $f_* \sco_C$ must be $\sco_{\bp^1}$.
	To compute the other summand, we find the degree of $f_* \sco_C$.
	For sufficiently large $m$, we have
	\begin{align*}
	h^0(\bp^1, (f_* \sco_C)(m)) = h^0(\sco_C \otimes f^* \sco_{\bp^1}(m)) = h^0(f^* \sco_{\bp^1}(m)) = 2m-g+1.
	\end{align*}
	If $f_* \sco_C \simeq \sco_{\bp^1}(-n) \oplus \sco_{\bp^1}$ then we find 
	\begin{align*}
	h^0(\bp^1, (f_* \sco_C)(m)) = (m+1) + (m+1-n) = 2m+2-n
	\end{align*}
	and therefore $2-n = -g + 1$ and $n = g + 1$ as claimed.

	To conclude, it remains to check $C$ lies in the linear system $\sco(2E + (g+1)F)$.
	Write $C = \alpha E + \beta F$.
	Since $C \to \bp^1$ has degree $2$, the intersection $C \cdot E$ has degree $2$, which implies
	$\alpha= 2$.
	Since $C$ has genus $g$ and the canonical divisor of $\mathbb F_{g+1}$ is $-2E - (g+1+2)F$
	adjunction implies $2g-2 = ((-2E - (g+1+2)F) + (2E + \beta F)) \cdot (2E + \beta F)$.
	Solving for $\beta$ yields $\beta = 2g + 2$, as we wanted.
	The final statement that 
	any curve of class
	$\sco(2E + (2g+2)F)$ has genus $g$
	can be deduced directly from Riemann-Roch.
	Alternatively, one may deduce it from the fact that hyperelliptic curves of genus $g$ exist and that the genus is constant in flat connected families.
\end{proof}

The above lemma will allow us to show that a certain linear space of sections on a Hirzebruch surface is a cover
of the stack $\hg$.
Let $G := \aut_{\mathbb F_n/\bz}$ denote the automorphisms group scheme
of the Hirzebruch surface $\mathbb F_n$ over $\spec \bz$.
The cover of $\hg$ we construct will be a $G$ torsor, and so to show $\hg$ is an algebraic stack (so that it has a smooth cover
by a scheme) 
we will need to know $G$ 
is smooth.
\begin{lemma}
	\label{lemma:smooth-aut}
	For any $n \geq 1$ 
	$\aut_{\mathbb F_n/\bz}$ is isomorphic to a certain semi-direct product $\bg_a^{n+1} \rtimes (\gl_2/\mu_n)$.
	In particular, it
	is smooth and connected of relative dimension $n+5$.
\end{lemma}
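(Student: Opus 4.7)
My plan is to construct a short exact sequence
\begin{equation*}
	1 \longrightarrow \bg_a^{n+1} \longrightarrow \aut_{\mathbb F_n/\bz} \longrightarrow \gl_2/\mu_n \longrightarrow 1
\end{equation*}
that splits, by isolating the ``Levi'' $\gl_2/\mu_n$ via an explicit action on $\mathbb F_n$ and recognizing the normal unipotent $\bg_a^{n+1}$ as the fiberwise endomorphisms $\sco(-n) \to \sco$ of the rank two bundle $\mathcal E := \sco_{\bp^1_\bz}(-n) \oplus \sco_{\bp^1_\bz}$ (whose projectivization is $\mathbb F_n$).

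The first step is a rigidity argument. For $n \geq 1$ and on every geometric fiber, the directrix $E$ is the unique irreducible curve with negative self-intersection, so every automorphism preserves $E$; consequently the fiber class $F$, characterized as the unique effective class with $F^2 = 0$ and $F \cdot E = 1$, is also preserved, and every automorphism commutes with the structural projection $\pi: \mathbb F_n \to \bp^1_\bz$ (applied fibrally and spread out by the fibral isomorphism criterion). This yields a morphism of group schemes $\aut_{\mathbb F_n/\bz} \to \aut_{\bp^1_\bz/\bz} = \pgl_2$, whose kernel is $\underline{\aut}_{\bp^1_\bz}(\mathbb F_n) = \underline{\aut}(\mathcal E)/\bg_m$ (modding out by fiberwise scaling which acts trivially on $\mathbb P(\mathcal E)$). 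A cohomology and base change computation of $\pi_*(\mathcal E^\vee \otimes \mathcal E)$ over $\bz$ shows, for $n \geq 1$, that $\underline{\aut}(\mathcal E)$ is the group of upper triangular matrices $\bigl(\begin{smallmatrix} a & c \\ 0 & b \end{smallmatrix}\bigr)$ with $a, b \in \bg_m$ and $c \in H^0(\bp^1_\bz, \sco(n)) \cong \bg_a^{n+1}$. Hence the kernel is smooth and connected of relative dimension $n+2$.

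The second step is to lift $\pgl_2$. The tautological inclusion $\sco_{\bp^1_\bz}(-1) \hookrightarrow \sco_{\bp^1_\bz}^{\oplus 2}$ gives $\sco(-1)$ a canonical $\gl_2$-equivariant structure; take the $n$th tensor power on the first summand of $\mathcal E$ and the trivial structure on the second. This produces a $\gl_2$-action on $\mathcal E$, hence on $\mathbb F_n = \mathbb P(\mathcal E)$. The center $\bg_m \subset \gl_2$ scales $\sco(-n)$ by weight $n$ and fixes $\sco$; on the projectivization it is then $\mu_n \subset \bg_m$ (and only $\mu_n$) that acts trivially. Therefore the action descends to a monomorphism $\gl_2/\mu_n \hookrightarrow \aut_{\mathbb F_n/\bz}$, and its composition with the projection to $\pgl_2$ is the standard surjection $\gl_2/\mu_n \twoheadrightarrow \pgl_2$. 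Combining with the description of the kernel, the image of $\gl_2/\mu_n$ normalizes the unipotent subgroup $\bg_a^{n+1}$ of the kernel (acting on $H^0(\bp^1, \sco(n))$ via its quotient to $\gl_2/\bg_m \cdot \text{stuff}$, twisted by a character) and meets the kernel precisely in the central $\bg_m/\mu_n$, furnishing the desired splitting and the semidirect product decomposition $\bg_a^{n+1} \rtimes (\gl_2/\mu_n)$.

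Smoothness, connectedness, and the dimension $n+5$ then come for free: $\bg_a^{n+1}$ and $\gl_2$ are smooth and connected over $\spec \bz$, the quotient $\gl_2/\mu_n$ by the central finite flat diagonalizable subgroup $\mu_n$ remains smooth and connected, and an extension of such by the smooth connected normal subgroup $\bg_a^{n+1}$ is again smooth and connected, of dimension $(n+1) + 4 = n+5$. The main obstacle I anticipate is carrying all of this out cleanly over $\spec \bz$ rather than over a field: in particular, verifying that the map $\aut_{\mathbb F_n/\bz} \to \pgl_2$ exists as a map of \emph{group schemes} (not just on points) requires spreading out the fibral rigidity of $E$ using the fibral isomorphism criterion, and identifying the kernel as $\underline{\aut}(\mathcal E)/\bg_m$ requires flat descent of projective bundle automorphisms compatible with the $\mathbb P(\mathcal E)$-description of $\mathbb F_n$.
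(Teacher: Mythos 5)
Your proof is correct in outline but takes a genuinely different route from the paper. You build the short exact sequence $1 \to \underline{\aut}(\mathcal E)/\bg_m \to \aut_{\mathbb F_n/\bz} \to \pgl_2 \to 1$, identify the kernel structurally as upper-triangular automorphisms of $\mathcal E = \sco(-n)\oplus\sco$ (giving $\bg_a^{n+1}\rtimes\bg_m$ after dividing by scalars), and lift $\pgl_2$ via a $\gl_2$-equivariant structure on $\mathcal E$, whereas the paper constructs the semidirect product's action on $H^0(\bp^1,\sco\oplus\sco(n))$ all at once, shows it preserves the cone $Y$ (the image of $\mathbb F_n$ under $|E+nF|$), obtains $\theta:\bg_a^{n+1}\rtimes(\gl_2/\mu_n)\to\aut_{\mathbb F_n/\bz}$ by blowing up the cone point, and then verifies $\theta$ is an isomorphism by the fibral criterion, reducing to geometric fibers. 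Your decomposition of the kernel as $\underline{\aut}(\mathcal E)/\bg_m$ is more conceptual than the paper's ``direct calculation,'' and makes the weight-$n$ action of the central torus (hence the appearance of $\mu_n$) transparent. The trade-off is exactly the one you flag: constructing $\aut_{\mathbb F_n/\bz}\to\pgl_2$ as a morphism of group schemes over $\spec\bz$ takes real work, and the ``fibral isomorphism criterion'' is not quite the right tool for it --- that criterion checks a given morphism is an isomorphism but does not produce a morphism. One cleaner route is to use that $\pic_{\mathbb F_n/\bz}\cong\underline{\bz\oplus\bz}$ (as in the paper's Notation on Hirzebruch surfaces), so for any connected $\bz$-scheme $S$ an $S$-automorphism $\phi$ of $(\mathbb F_n)_S$ acts on the constant Picard sheaf preserving the intersection form and hence fixes the class of $F$ up to a twist from $\pic(S)$; since $\pi_S$ is the morphism attached to the relative complete linear system of $\sco(F)$, $\phi$ descends to $\bp^1_S$. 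The paper avoids this entirely by constructing $\theta$ on the nose and only invoking a map to $\pgl_2$ over algebraically closed fields, where the spreading-out issue disappears; for that economy it pays by being less transparent about the group-theoretic structure of the kernel. Both arguments are valid; yours requires the extra care over $\bz$ that you correctly anticipate.
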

\begin{proof}
	The plan is to construct a map from the smooth group scheme 
	$\bg_a^{n+1} \rtimes (\gl_2/\mu_n)$
	to $\aut_{\mathbb F_n/\bz}$, which we will verify to be an isomorphism.
	We now describe the semidirect product structure in terms of a group action on $H^0(\bp^1, \sco \oplus \sco(n))$.
	Choose $x,y$ as a basis for $H^0(\bp^1, \sco(1))$
	and represent elements of $H^0(\bp^1, \sco \oplus \sco(n))$ by pairs $(r, z(x,y))$ for $z$ a degree $n$ polynomial.
	Then $g \in \gl_2$ sends $(r,z(x,y)) \mapsto (r, z(gx,gy))$ and $(\alpha_0, \ldots, \alpha_n) \in \bg_a^{n+1}$ sends
	$(r,z(x,y)) \mapsto (r, z(x,y) + \sum_{i=0}^n \alpha_i x^iy^{n-i})$.
	The action of $\gl_2$ has kernel $\mu_n \subset \gl_2$.
	Now, because the pushforward of $\sco_{\mathbb F_{n}}(E + nF)$ along $\mathbb F_{n} \to \bp^1$ is 
	$\sco \oplus \sco(n)$
	the complete linear system associated to $nF + E$ induces a map
	$\mathbb F_n \to \bp H^0(\bp^1, \sco \oplus \sco(n))$.
	This contracts the directrix $E$ but is an immersion on the open complement of $E$.
	We claim the action of $\bg_a^{n+1} \rtimes (\gl_2/\mu_n)$ fixes the image $Y$ of $\mathbb F_{n}$ in $\bp H^0(\bp^1, \sco \oplus \sco(n))$.
	First we check the action of $\gl_2$ fixes the image.
	The action fixes both the point $p$ corresponding to the quotient $H^0(\bp^1, \sco \oplus \sco(n)) \to H^0(\bp^1, \sco)$ 
	and the rational normal curve $R$ lying in the hyperplane associated to the quotient $H^0(\bp^1, \sco \oplus \sco(n)) \to H^0(\bp^1, \sco(n))$.
	As $Y$ is the 
	cone over $R$ with cone point $p$, any such automorphism must preserve $Y$.
	The action of $\bg_a^{n+1}$ fixes $p$ and sends $R$ to another curve on the cone $Y$ not meeting $p$. It therefore preserves $Y$ as well.
	We obtain the induced map $\theta: \bg_a^{n+1} \rtimes (\gl_2/\mu_n) \to \aut_{\mathbb F_n/\bz}$ via the identification $\aut_{Y/\bz} \simeq \aut_{\mathbb F_n/\bz}$
	from blowing up $p$.

	It remains to check this map is an isomorphism.
By the fibral isomorphism criterion
\cite[17.9.5]{EGAIV.4},
since 
$\bg_a^{n+1} \rtimes (\gl_2/\mu_n)$ is flat over $\spec \bz$, in order to check $\theta: \bg_a^{n+1} \rtimes (\gl_2/\mu_n) \to \aut_{\mathbb F_{n}/\bz}$
is an isomorphism, it suffices to do so on geometric fibers.
%
%
We now check the restriction of $\theta$ to $\spec k$, for $k$ an algebraically closed field, is an isomorphism.
This may be checked using the above description of $G$ acting on $Y$,
the image of $\mathbb F_n \to \bp H^0(\bp^1, \sco \oplus \sco(n))$
induced by $\sco_{\mathbb F_{n}}(nF + E)$.
As mentioned above, we can identify $\aut_{Y/k} \simeq \aut_{\mathbb F_n/k}$.
Every automorphism of $Y$ must preserve the point $p$.
Therefore, every automorphism of $\mathbb F_n$ preserves $E$ and
we obtain a map $\aut_{Y/k} \simeq \aut_{\mathbb F_n/k} \to \aut_{E/k} = \pgl_2$.
Using that every automorphism of $Y$ is induced by one of 
$\bp H^0(\bp^1, \sco \oplus \sco(n))$ fixing $Y$,
this map is surjective, and the kernel can be identified with $\bg_a^{n+2} \times \bg_m$ by direct calculation. 
The latter factor corresponds to the central
torus in $\gl_2/\mu_n$.
This verifies that $\theta$ is an isomorphism.
\end{proof}

To state the next proposition proving that $\hg$ is a smooth algebraic stack, we now introduce a smooth scheme with a map to $\hg$.
Let $\pi: \mathbb F_{g+1} \to \spec \bz$ denote the projection and define
$U \subset \bp \left(\pi_*\sco_{\mathbb F_{g+1}}(2E+(2g+2)F) \right)$ over $\spec \bz$
as the open subscheme parameterizing 
smooth curves in the linear system
$\pi_*\sco_{\mathbb F_{g+1}}(2E+(2g+2)F$
	with universal family $C \to \bp^1_U \to U$.
	The family $C \to \bp^1_U \to U$ is equivariant for the action of $G := \aut_{\mathbb F_{g+1}/\bz}$
	and descends to a map of stacks $[C/G] \to [\bp^1_U/G] \to [U/G]$ inducing a map $[U/G] \to \hg$.
	
\begin{proposition}
	\label{proposition:smooth-hg-cover}
	For $g \geq 2$, the above constructed map $[U/G] \to \hg$ is an equivalence of stacks.
Further, $U \to \hg$ is a smooth cover and hence $\hg$ is a smooth integral algebraic stack
of relative dimension $2g - 1$ over $\spec \bz$.
\end{proposition}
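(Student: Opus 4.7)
The plan is to construct a quasi-inverse to the given functor $[U/G] \to \hg$. Given $(B,C,f,P,h) \in \hg(B)$, we form the locally free rank-$2$ sheaf $\mathcal E := f_*\sco_C$ on $P$, together with the closed immersion $C \hookrightarrow \bp(\mathcal E)$ over $P$ induced by the surjective adjunction $f^*\mathcal E \twoheadrightarrow \sco_C$. We then define $T := \isom_B(\bp(\mathcal E), \mathbb F_{g+1,B})$, the $B$-sheaf of isomorphisms, on which $G := \aut_{\mathbb F_{g+1}/\bz}$ acts by post-composition. The key technical claim will be that $T$ is an étale $G$-torsor on $B$. Granting this, pulling back along $T \to B$ and composing with the universal isomorphism identifies $C \times_B T$ with a flat family of effective Cartier divisors in $\mathbb F_{g+1, T}$ of fiberwise class $2E + (2g+2)F$ (by \autoref{lemma:hg-cover}), yielding a $G$-equivariant morphism $T \to U$, which descends to a morphism $B \to [U/G]$. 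A routine check confirms this construction is quasi-inverse to $[U/G] \to \hg$.

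The main technical step is to verify that $\bp(\mathcal E) \to B$ is étale-locally on $B$ isomorphic to $\mathbb F_{g+1,B} \to B$. First, $h : P \to B$ is a Brauer--Severi scheme of relative dimension $1$ and hence étale-locally trivial; we pass to such a cover and take $P = \bp^1_B$, with structure map $p : \bp^1_B \to B$. Second, by \autoref{lemma:hg-cover} the rank-$2$ bundle $\mathcal E$ on $\bp^1_B$ has geometric fibers of splitting type $(0,-g-1)$. The fiberwise cohomology dimensions $h^0(\mathcal E|_{\bar b}) = 1$ and $h^1(\mathcal E|_{\bar b}) = g$ are locally constant, so by cohomology and base change $p_*\mathcal E$ is a line bundle on $B$ commuting with base change, and the natural map $p^*p_*\mathcal E \to \mathcal E$ is fiberwise the inclusion of the trivial summand. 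Trivializing $p_*\mathcal E$ Zariski-locally on $B$ yields a short exact sequence $0 \to \sco_{\bp^1_B} \to \mathcal E \to M \to 0$ with $M$ a line bundle of fiberwise degree $-g-1$; a further Zariski localization trivializes $M(g+1)$ as a pullback from $B$, giving $M \cong \sco_{\bp^1_B}(-g-1)$. The extension class lies in $\ext^1(\sco(-g-1), \sco) = H^1(\bp^1_B, \sco(g+1))$, which vanishes for $B$ affine, so the sequence splits and $\mathcal E \cong \sco \oplus \sco(-g-1)$ Zariski-locally. Thus $\bp(\mathcal E) \cong \mathbb F_{g+1, B}$ étale-locally on $B$, and since $G$ is smooth by \autoref{lemma:smooth-aut}, $T$ is an étale $G$-torsor.

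With the equivalence $\hg \simeq [U/G]$ established, the remaining statements follow quickly. The scheme $U$ is smooth over $\spec \bz$ as an open subscheme of a projective bundle, and $G$ is smooth by \autoref{lemma:smooth-aut}, so $U \to \hg$ is a smooth cover and $\hg$ is a smooth algebraic stack. For integrality, $U$ is reduced (being smooth) and a non-empty open subscheme (smooth hyperelliptic curves of each genus exist in every characteristic) of an irreducible projective bundle over the irreducible base $\spec \bz$, hence integral, whence so is $\hg$. Finally, for the dimension, the projection formula gives $\pi_*\sco_{\mathbb F_{g+1}}(2E+(2g+2)F)$ of rank $3g+6$ as a $\bz$-module, so $\dim U = 3g+5$ relative to $\spec \bz$; together with $\dim G = (g+1)+5 = g+6$ from \autoref{lemma:smooth-aut}, this gives $\dim \hg = (3g+5)-(g+6) = 2g-1$.
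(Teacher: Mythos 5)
Your proposal is correct and follows the same overall strategy as the paper: both construct a quasi-inverse $\hg \to [U/G]$ by sending a hyperelliptic family to the $\isom$-scheme between $\bp(f_*\sco_C)$ and $\mathbb F_{g+1}$, show this $\isom$-scheme is a $G$-torsor, and deduce the equivalence, with the same dimension count $(3g+5)-(g+6)=2g-1$. The real difference lies in the torsor step: the paper asserts that $G\times_{\spec\bz}I\to I\times_B I$ is straightforwardly an isomorphism and then cites \autoref{lemma:smooth-aut} for smoothness and \autoref{lemma:hg-cover} for surjectivity of $I\to B$, leaving the flatness (hence the fppf/étale-local triviality) of $I\to B$ implicit. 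Your proof instead establishes étale-local triviality of $T$ directly, by passing to an étale cover trivializing the genus-$0$ Brauer--Severi family $P$, then using cohomology and base change to produce $0 \to \sco_{\bp^1_B}\to f_*\sco_C \to \sco_{\bp^1_B}(-g-1)\to 0$ Zariski-locally over affine $B$ and killing the extension class via $H^1(\bp^1_B,\sco(g+1))=0$. This gives a self-contained justification of the $G$-torsor claim (and hence of the smoothness of $I\to B$) that the paper treats tersely; the rest of the argument — equivariance of the universal family, the inverse-functor check, integrality, and the dimension — is essentially identical. One small terminological quibble: you invoke ``the projection formula'' for the rank computation of $\pi_*\sco_{\mathbb F_{g+1}}(2E+(2g+2)F)$, but what is actually used is the cohomology computation of \autoref{lemma:linear-system-dimension} (or a direct Euler-characteristic argument), not the projection formula per se.
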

\begin{proof}
	Note that the final statement follows from the first by \autoref{lemma:smooth-aut}
	because $U \to [U/G]$ is a smooth cover, and $[U/G]$ is an algebraic stack.

	Hence, it suffices to show $[U/G] \to \hg$ is an equivalence of stacks.
	In the statement of \autoref{proposition:smooth-hg-cover}
	we have constructed a map $[U/G] \to \hg$ and we now construct an inverse map.
	In order to construct a map $\hg \to [U/G]$, 
	given any $B \to \hg$, we wish to construct a $G$ torsor over $B$ with a map to $U$.
	The map $B \to \hg$ yields a family $C \xra{f} P \xra{h} B$ and we have an immersion $C \to \bp(f_*\sco_C)$ induced by the surjective adjunction map $f^* f_* \sco_C \to \sco_C$.
	Then, consider the scheme $I := \isom(\mathbb F_{g+1}, \bp(f_* \sco_C)) \to B$.
	To construct a map $I \to U$, it is enough to produce a family of smooth curves in the linear system $\sco(2E + (2g+2)F)$ in $(\mathbb F_{g+1})_I$.
	This is provided by the pullback of $C \to \bp(f_* \sco_C) \to B$ along $I \to B$, using the isomorphism
$(\mathbb F_{g+1})_I \simeq (\bp(f_* \sco_C))_I$ coming from the universal property of $I$.
	We next claim $I$ is a $G$-torsor over $B$.
	It certainly has a $G$ action via the action of $G$ on $\mathbb F_{g+1}$, and it is straightforward to verify
	the map $G \times_{\spec \bz} I \to I \times_B I, (g,x) \mapsto (x,gx)$ is an isomorphism.
	The cover $I \to B$ is therefore smooth by \autoref{lemma:smooth-aut}, and it is surjective by \autoref{lemma:hg-cover}.

	To conclude, it is enough to show that this map $\hg \to [U/G]$ defines an inverse equivalence to the map $[U/G] \to \hg$ from the statement.
	On objects, the composition $\hg \to [U/G] \to \hg$ constructs the immersion into a relative Hirzebruch surface, and then forgets it,
	so is equivalent to the identity.
	Further, the composition $[U/G] \to \hg \to [U/G]$ is also equivalent to the identity because the immersion into a 
	Hirzebruch surface is uniquely
	determined by the hyperelliptic curve, and any automorphism of the hyperelliptic curve induces a unique automorphism of the relative
	Hirzebruch surface.

	The final statement about the dimension holds because $G$ has relative dimension $g + 6$, using \autoref{lemma:smooth-aut}
	and $U$ has relative dimension $3g + 5$ by \autoref{lemma:linear-system-dimension} below.
	Therefore, $\dim \hg = (3g+5) - (g+6) = 2g - 1$.
\end{proof}

The following standard calculation on the dimension of a linear system was needed above to compute the dimension of $\hg$.
\begin{lemma}
	\label{lemma:linear-system-dimension}
	For $k$ a field and $g \geq 0$, $\bp H^0( \mathbb F_{g+1}, \sco_{\mathbb F_{g+1}}(2E + (2g+2)F))$ on $\mathbb F_{g+1}$ over $k$
	has dimension $3g + 5$.
\end{lemma}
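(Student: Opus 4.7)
The plan is to compute $h^0(\mathbb F_{g+1}, \sco(2E + (2g+2)F))$ by pushing the line bundle forward along the Hirzebruch projection $\pi \colon \mathbb F_{g+1} \to \bp^1_k$, which reduces the problem to summing dimensions of global sections of line bundles on $\bp^1_k$.

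Concretely, I would first identify $\sco(E)$ with the relative $\sco(1)$ of $\mathbb F_{g+1} = \bp(\sco \oplus \sco(-(g+1)))$ in the Grothendieck convention adopted in \autoref{notation:hirzebruch}; this is pinned down by comparing intersection numbers against $E$ and $F$ on both sides, using the formulas $E \cdot E = -(g+1)$, $E \cdot F = 1$, $F \cdot F = 0$ recorded there. Since $F$ is a fiber of $\pi$, we also have $\sco(F) \simeq \pi^* \sco_{\bp^1}(1)$. Applying the standard pushforward formula $\pi_* \sco_{\bp(\sce)}(m) \simeq \sym^m \sce$ with $\sce = \sco \oplus \sco(-(g+1))$, together with the projection formula, then yields
\begin{equation*}
\pi_* \sco(2E + (2g+2)F) \;\simeq\; \sco(2g+2) \oplus \sco(g+1) \oplus \sco
\end{equation*}
on $\bp^1_k$. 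Summing the three contributions gives $h^0 = (2g+3) + (g+2) + 1 = 3g+6$, so the projectivization has dimension $3g + 5$.

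I do not anticipate any substantive obstacle, since the whole argument is a bookkeeping calculation on $\bp^1_k$. The only point worth flagging is that in characteristic $2$ the splitting $\sym^2(A \oplus B) \simeq A^{\otimes 2} \oplus (A \otimes B) \oplus B^{\otimes 2}$ for line bundles $A, B$ continues to hold, since the defining relation of $\sym^2$ merely identifies the two mixed summands of $(A \oplus B)^{\otimes 2}$ and does not invoke the invertibility of $2$; hence the computation goes through uniformly over any field.
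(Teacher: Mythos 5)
Your proof is correct, but it takes a genuinely different route from the paper's. The paper picks a smooth curve $C$ in the linear system $|2E + (2g+2)F|$, uses the ideal sheaf exact sequence $0 \to \sco_{\mathbb F_{g+1}} \to \sco_{\mathbb F_{g+1}}(C) \to \sco_C(C) \to 0$ together with the vanishing $H^1(\mathbb F_{g+1}, \sco_{\mathbb F_{g+1}}) = 0$, computes $\deg \sco_C(C) = (2E+(2g+2)F)^2 = 4g+4$, and applies Riemann--Roch on $C$ (using that $C$ has genus $g$, established in \autoref{lemma:hg-cover}) to get $h^0(\sco_C(C)) = 3g+5$ and hence $h^0(\sco_{\mathbb F_{g+1}}(C)) = 3g+6$. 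You instead push the line bundle forward along $\pi \colon \mathbb F_{g+1} \to \bp^1$, identifying $\sco(E)$ with the relative $\sco(1)$ and obtaining $\pi_* \sco(2E+(2g+2)F) \simeq \sco \oplus \sco(g+1) \oplus \sco(2g+2)$, then summing $h^0$'s on $\bp^1$. Your computation is self-contained and avoids having to know in advance that the curve $C$ has genus $g$; the cost is that one must be careful with the Grothendieck projectivization convention and the identification of $\sco(E)$ with the tautological $\sco(1)$, both of which you handle correctly (and which indeed match the conventions laid out in \autoref{notation:hirzebruch}, where $E$ is the section with $\sco(1)|_E \simeq \sco(-(g+1))$, $E^2 = -(g+1)$, $E \cdot F = 1$). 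Your closing remark about $\sym^2$ of a direct sum of line bundles splitting integrally, hence in characteristic $2$, is also right and worth flagging given the paper's emphasis on characteristic-$2$ subtleties elsewhere.
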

\begin{proof}
	Let $C$ be a smooth curve in the linear system $2E + (2g + 2)F$.
	We obtain an ideal sheaf exact sequence
	\begin{equation}
		\label{equation:hirzebruch-ideal-sequence}
		\begin{tikzcd}
			0 \ar {r} & \sco_{\mathbb F_{g+1}} \ar {r} & \sco_{\mathbb F_{g+1}}(C) \ar {r} & \sco_C(C) \ar {r} & 0.
	\end{tikzcd}\end{equation}
	Note that $H^1(\mathbb F_{g+1}, \sco_{\mathbb F_{g+1}})= 0$ as can be computed by the Leray spectral sequence associated to 
	the composition $\mathbb F_{g+1} \xra{f} \bp^1 \to \spec k$ because $R^1 f_* \sco_{\mathbb F_{g+1}} = 0$ and $H^1(\bp^1, \sco_{\bp^1}) = 0$.
	Therefore, upon taking cohomology of
	\eqref{equation:hirzebruch-ideal-sequence}, we get an exact sequence
	\begin{equation}
		\label{equation:}
		\begin{tikzcd}
			0 \ar {r} & H^0(\mathbb F_{g+1}, \sco_{\mathbb F_{g+1}}) \ar {r} & H^0(\mathbb F_{g+1}, \sco_{\mathbb F_{g+1}}(C)) \ar {r} & H^0(\mathbb F_{g+1}, \sco_C(C)) \ar {r} & 0.
	\end{tikzcd}\end{equation}
	We are aiming to show $h^0(\mathbb F_{g+1}, \sco_{\mathbb F_{g+1}}(C)) = 3g + 6$, which has dimension $1$ more than the projectivization in the statement of the lemma.
	Since $h^0(\mathbb F_{g+1}, \sco_{\mathbb F_{g+1}}) = 1$, it is enough to show 
	$h^0(\mathbb F_{g+1}, \sco_C(C)) = 3g +5$.
	We have
	\begin{align*}
		\deg \sco_C(C) =  (2E + (2g+2)F) \cdot (2E + (2g+2)F) = -4(g+1) + 4(2g+2) = 4g + 4.
	\end{align*}
	Since $C$ has genus $g$ by \autoref{lemma:hg-cover},
	it follows from Riemann Roch that
	$h^0(\mathbb F_{g+1}, \sco_C(C)) = 4g + 4 - g + 1 = 3g + 5$.
\end{proof}

\subsection{Showing $\hg$ has a closed immersion into $\mg$}
\label{subsection:closed-immersion}
We next check the natural map $\hg \ra \mg$ is a closed immersion.
The following general lemma will be useful.
\begin{lemma}
	\label{lemma:closed-immersion-criterion}
	Suppose $f: X \to Y$ is a proper morphism of algebraic stacks with diagonals that are separated and of finite type.
	Assume $f$ induces a bijection on isotropy subgroups at every geometric point of $X$, is injective on geometric points, 
	and injective on tangent vectors at every geometric point.
	Then $f$ is a closed immersion.
\end{lemma}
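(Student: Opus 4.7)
The plan is to reduce to a morphism of algebraic spaces, verify it is proper, unramified, and universally injective (radicial), and then invoke the standard fact that such a morphism is a closed immersion.

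First, the hypothesis that $f$ induces an injection on isotropy groups at every geometric point implies that $f$ is representable by algebraic spaces, since the automorphism group of the geometric fiber of $f$ at $x$ is the kernel of $\aut_X(x)\to\aut_Y(f(x))$. Being a closed immersion is smooth-local on the target for representable morphisms, so after choosing a smooth surjection $T\to Y$ from a scheme, it suffices to show the base change $f_T\colon X\times_Y T\to T$ of algebraic spaces is a closed immersion. All three remaining hypotheses (proper, injective on geometric points, injective on tangent vectors) are preserved by smooth base change; checking the second uses the full bijectivity on isotropy to modify an isomorphism of geometric points of $X$ so that it respects the given identifications in $Y$.

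Second, $f_T$ is unramified: properness gives that $f_T$ is locally of finite type, and at a geometric point $x$ the kernel of the tangent map equals the tangent space of the geometric fiber at $x$, so injectivity of the tangent map forces the local ring of this fiber at $x$ to be a field, hence unramifiedness at $x$; since this holds at every geometric point, $\Omega_{X_T/T}=0$.

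Third, $f_T$ is finite and universally injective. Unramified plus locally of finite type yields quasi-finite, and proper plus quasi-finite implies finite by the algebraic-spaces version of Zariski's main theorem. Over an algebraically closed field $\bar k$, the geometric fibers of the unramified morphism $f_T$ are disjoint unions of copies of $\spec\bar k$; injectivity on geometric points forces at most one such copy. A fiber-by-fiber check then shows that for each $y\in T$ the scheme-theoretic fiber is either empty or $\spec\kappa(y)$, proving universal injectivity.

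Finally, a finite, unramified, radicial morphism is a closed immersion. Working Zariski-locally on $T$ one reduces to an affine homomorphism $A\to B$ with $B$ finite over $A$, and the above conditions force $B\otimes_A\kappa(\mathfrak{m})=\kappa(\mathfrak{m})$ at every maximal ideal $\mathfrak{m}\subset A$, so Nakayama's lemma yields surjectivity of $A\to B$. The main conceptual step is the representability reduction that allows a smooth-local attack; the remaining implications form a routine chain of standard facts about morphisms of algebraic spaces.
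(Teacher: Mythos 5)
Your proposal follows essentially the same skeleton as the paper's proof: use the isotropy hypothesis to deduce representability, base change along a smooth cover $T \to Y$ to reduce to (spaces over) schemes, and then conclude from properness, injectivity on geometric points, and injectivity on tangent vectors. Where the paper simply cites \cite[18.12.6]{EGAIV.4} for the final step, you unpack that citation into elementary pieces (tangent-injective $\Rightarrow$ unramified; proper $+$ quasi-finite $\Rightarrow$ finite by ZMT for algebraic spaces; unramified radicial fibers; Nakayama); this is a more self-contained but equivalent route. Two small points worth tightening: first, you assert representability from ``injection on isotropy groups'' alone, whereas the paper invokes the hypothesis that $X$ and $Y$ have separated finite type diagonals together with \cite[Theorem 2.2.5(1)]{Conrad:arithmeticModuliOf} — injectivity on geometric points of isotropy group schemes does not by itself give monomorphy of those group schemes (consider an infinitesimal kernel), so some such auxiliary hypothesis is genuinely needed. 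Second, the paper takes the extra step of noting $X' = X\times_Y Y'$ is actually a scheme (via quasi-affineness of $f'$), so that the final citation is to the scheme-theoretic EGA result; you instead work directly in the algebraic-space setting, which is fine but should be stated carefully (once $f_T$ is finite, $X_T$ is automatically a scheme, so the Zariski-local Nakayama argument at the end is legitimate). Neither of these is a serious gap, and your argument is correct in substance.
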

With some additional work, one can remove the hypotheses on the diagonals of $X$ and $Y$, though we will not need this generalization.

\begin{proof}
	Let $Y' \to Y$ be a smooth cover by a scheme, let $X' := X \times_Y Y'$, and let $f' : X' \to Y'$ denote the base change of $f$.
	It is enough to check $f'$ is a closed immersion.
	Because $f$ is an injection on isotropy groups, 
	and $X$ and $Y$ have separated finite type diagonals,	
	$f$ is representable by
	\cite[Theorem 2.2.5(1)]{Conrad:arithmeticModuliOf}.
	Thus, $X'$ is an algebraic space.
	Note that $X'$ is in fact a scheme because $f'$ is quasi-affine by a version of Zariski's main theorem
	\cite[\href{https://stacks.math.columbia.edu/tag/082J}{Tag 082J}]{stacks-project}.
	Further, because $f$ is a surjection on isotropy groups and $f$ is injective on geometric points, 
	$f'$ is also injective on geometric points.
	We find that $f'$ is proper, injective on geometric points, and injective on tangent vectors, 
	hence a closed immersion \cite[18.12.6]{EGAIV.4}.
\end{proof}

	\begin{lemma}
		\label{lemma:closed-immersion-hg}
		For $g \geq 2$, the natural map $\hg \to \mg$ sending $(B,C,f,P,h)$ over a scheme $B$ to the curve $C$ over $B$ is a closed immersion.
	\end{lemma}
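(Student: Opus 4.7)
The plan is to verify the hypotheses of \autoref{lemma:closed-immersion-criterion} for the natural map $\phi : \hg \to \mg$. The condition of separated, finite type diagonals is standard for both stacks: for $\mg$ it is classical, and for $\hg$ it follows from the presentation $\hg \simeq [U/G]$ of \autoref{proposition:smooth-hg-cover} together with the finite typeness and separatedness of $G = \aut_{\mathbb F_{g+1}/\bz}$ established in \autoref{lemma:smooth-aut}. It therefore remains to check properness, bijection on isotropy, injection on geometric points, and injection on tangent vectors.

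The three pointwise conditions all reduce to the classical uniqueness of the hyperelliptic structure when $g \geq 2$: over an algebraically closed field $k$ of any characteristic, the hyperelliptic involution $\iota$ of a hyperelliptic curve $C$ is the unique non-trivial automorphism whose quotient has genus $0$, and every automorphism of $C$ commutes with $\iota$. I would justify this by the intrinsic factorization of the canonical map $C \to \bp^{g-1}$ through the hyperelliptic $\bp^1$, which rests on the characteristic-independent identification $\omega_C \simeq f^* \sco_{\bp^1}(g-1)$. Granting uniqueness, the triple $(C,f,P)$ is reconstructed from $C$ alone as $(C, C \to C/\iota, C/\iota)$, giving injection on geometric points, and every automorphism of $C$ descends to $P = C/\iota$, giving bijection on isotropy. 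For injection on tangent vectors at $(C,f,P)$, a first-order deformation of the triple lifting a prescribed deformation $\tilde C$ of $C$ is determined by an extension $\tilde \iota$ of $\iota$ to an involution of $\tilde C$ (with $\tilde P := \tilde C / \tilde \iota$); two such extensions would differ by an infinitesimal automorphism of $\tilde C$ restricting to the identity on $C$, and these are classified by $H^0(C, T_C)$, which vanishes for $g \geq 2$.

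For properness I would apply the valuative criterion. Given a DVR $R$ with fraction field $K$, a hyperelliptic structure $(C_K, f_K, P_K)$ over $K$, and an extension of $C_K$ to a smooth proper curve $C$ over $R$, the generic involution $\iota_K$ extends uniquely to an involution $\iota$ of $C/R$ by separatedness of the relative automorphism scheme (equivalently, by taking the scheme-theoretic closure of the graph in $C \times_R C$). One then sets $P := C / \iota$; this is smooth and proper over $R$ because quotients of smooth curves by finite group actions are smooth, and has fibers of genus $0$ by flat-family invariance of the arithmetic genus, whence $C \to P$ is automatically finite locally free of degree $2$. Uniqueness of the extension is forced by uniqueness of $\iota$.

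The hard part will be the intrinsic characterization of $\iota$ in characteristic $2$, where the standard argument via the eigenspace decomposition of $H^0(\omega_C)$ under $\iota$ is unavailable. This is sidestepped by the Stein-factorization approach above, which uses only the formula $\omega_C \simeq f^* \sco_{\bp^1}(g-1)$ and so works uniformly in every characteristic; also, the quotient $C / \iota$ is smooth in characteristic $2$ even though the $\bz/2$-action is wild, since quotients of smooth curves by any finite group action are smooth.
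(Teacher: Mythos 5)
Your plan is structurally the same as the paper's: verify the hypotheses of \autoref{lemma:closed-immersion-criterion}. The pointwise parts (bijection on isotropy, injection on geometric points) are also handled the same way, via uniqueness of the hyperelliptic map coming from the factorization of the canonical map through the degree-two cover. Where you diverge is in the tangent-vector and properness steps, and in both places you lean on the hyperelliptic \emph{involution} where the paper leans on the degree-two \emph{line bundle} $\scl$, and the involution-based route has genuine gaps in residue characteristic $2$.

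The more serious gap is in properness. You extend $\iota_K$ to an $R$-involution $\iota$ of $C$ (correct, by properness of $\aut_{C/R}$) and then declare $P := C/\iota$ smooth proper over $R$ with genus-$0$ fibers, citing ``quotients of smooth curves by finite group actions are smooth.'' That fact is about quotients over a \emph{field}; here you need the \emph{relative} smoothness of $C/\iota \to \spec R$. This reduces to knowing that the formation of the invariant subring commutes with passage to the special fiber, i.e.\ that $(C/\iota)_k = C_k/\iota_k$. When the residue characteristic is $2$ and the $\bz/2$-action is wildly ramified, this is not automatic: a priori $(C/\iota)_k$ is only flat proper of arithmetic genus $0$, and one still has to argue that it is reduced and hence smooth. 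The paper sidesteps this entirely by extending the invertible sheaf $\scl$ across the special fiber (using properness of $\pic_{C/R}$), computing $h^0$ on fibers via Riemann--Roch and semicontinuity, and applying Grauert to produce the map $C \to \bp^1_R$ directly.

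The tangent-vector step has a similar, smaller gap. You assert that a first-order deformation of the triple over the trivial deformation of $C$ is determined by an extension $\tilde\iota$ of $\iota$, with $\tilde P := \tilde C/\tilde\iota$. The uniqueness of $\tilde\iota$ via $H^0(C,T_C)=0$ is fine, but the first clause implicitly requires $\sco_{\tilde P} = (\tilde f_*\sco_{\tilde C})^{\tilde\iota}$ over $k[\varepsilon]$, which is again the quotient-commutes-with-base-change issue in characteristic $2$. It can be pushed through (the separability of $f$ makes the local degree-$2$ algebras generically generated by a non-zero-divisor $b$, and the invariants work out), but it needs to be said. The paper instead identifies the fiber of $\hg\to\mg$ over $[C]$ with a locus in $\pic$ and reduces injectivity on tangent vectors to surjectivity of the multiplication map $\mu_0$, which it defers to \autoref{lemma:dual-tangent-space-description}; that computation is characteristic-independent and does not touch quotients at all.

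In short: same skeleton, different flesh. Your involution-based approach is conceptually appealing but needs an explicit argument that quotients by $\iota$ commute with base change to the closed (resp.\ infinitesimal) fiber in residue characteristic $2$; as written, this is the missing step. The paper's line-bundle approach trades that for a short cohomological computation and avoids quotients altogether.
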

	\begin{proof}
	The idea is to verify the conditions of \autoref{lemma:closed-immersion-criterion}.
	To check that $\hg \ra \mg$ is a closed immersion, 
	we first verify that the map induces bijections on isotropy group spaces at each geometric point. 
	In particular, this will imply the map is representable.
	As a preliminary step, we claim given a hyperelliptic curve $C$ of genus $g \geq 2$ over an algebraically closed field $k$ and an automorphism $r: C \to C$, there is a unique map $f: C \to \bp^1$,
	up to automorphism of $\bp^1$. 	
	To see this, it is enough to verify that for any degree $2$ map $C \to \bp^1$, the composition with the $(g-1)$-Veronese $\bp^1 \to \bp^{g-1}$ realizes the map $C \to \bp(H^0(C, \omega_C)) \simeq \bp^{g-1}$
	associated to the invertible sheaf $\omega_C$. This holds because $\omega_C$ is the unique invertible sheaf on $C$ of degree $2g-2$ with a $g$-dimensional space of global sections.
	Fixing such a map $C \to \bp^1$, we find that $\bp^1$ is the quotient of $C$ by the hyperelliptic involution as a scheme.
	Therefore, by the universal property of quotients,
	there is a unique automorphism $s: \bp^1 \to \bp^1$ over $k$ so that $f \circ r = s \circ f$.
	This shows the map on isotropy groups is surjective, and has kernel supported on a single geometric point. To show the kernel is trivial, it is
	enough to show the kernel is reduced over $k$. The same argument as above applied over the dual numbers in place of $k$ shows the kernel is indeed reduced.
	
	Next, we check $\hg \to \mg$ is injective on geometric points and tangent vectors at each geometric point.
	The fiber over an map $\spec k \to \mg$ for $k$ an algebraically closed field,
	corresponding to a curve $C$ over $\spec k$
	is identified with the scheme $W$ 
	parameterizing degree $2$ line bundles on $C$ which have a $2$-dimensional space of global sections, up to isomorphism.
	First, $W$ is either empty or set theoretically a single point, because
	given any curve $C$ of genus at least $2$ over an algebraically closed field, there is at most one degree $2$ map to $\bp^1$,
	up to automorphisms of $\bp^1$.
	It remains to check $W$ is reduced in the case it is nonempty.
	The tangent space to the unique point of $W$ corresponding to the line bundle $\scl$ on $C$
	can be identified with the cokernel of the multiplication map
	$\mu_0 : H^0(C, \scl) \otimes H^0(C, \omega_C \otimes \scl^\vee) \to H^0(C, \omega)$
	\cite[Proposition 4.2(i)]{ACGH:I}, so it is enough to verify $\mu_0$ is surjective.
	We verify this standard calculation below in \autoref{lemma:dual-tangent-space-description}.

	To conclude, by \autoref{lemma:closed-immersion-criterion},
	it suffices to check $\hg \ra \mg$ is proper.
	For this, we use the valuative criterion for properness.
	Let $R$ be a discrete valuation ring, let $s$ denote the closed point of $\spec R$ and let $\eta$ denote the generic point. 
	We begin with a curve $h: C \to \spec R$ so that the generic fiber has a degree $2$ map to a genus $0$ curve $P$.
	It is enough to show there is some extension of $R$ on which $C$ factors through a degree $2$ map to $\bp^1_R$.
	By replacing $R$ with a suitable extension, we may assume we have a factorization $C_\eta \to \bp^1_\eta \to \eta$.
	Therefore, it is enough to show that given $C \to \spec R$ with $C_\eta \to \bp^1_\eta$ there is a unique extension of this map to a map $C \to \bp^1_R \to \spec R$.
	The pullback of $\sco_{\bp^1_\eta}(1)$ to $C_\eta$ gives a degree $2$ invertible sheaf on $C_\eta$
	which extends uniquely to a degree $2$ invertible sheaf $\scl$ on $H$, for example by properness of $\pic_{C/R}$.
	Riemann Roch (using $g \geq 2)$ and upper semicontinuity of cohomology together imply  $h^0(C_s, \scl_s) = 2$.
	Therefore, Grauert's theorem tells us $h_* \scl$ is locally free, hence free, of rank $2$.
	Then, up to elements of $R^\times$, there is then a unique choice of basis for $h_* \scl$ inducing a map
	$C \to \bp^1_R$ compatible with the given map $C_\eta \to \bp^1_\eta$, as we wished to show.
\end{proof}

\section{The general setup for checking injectivity on tangent vectors}
\label{section:general-setup}

Let $k$ be a field and let $C$ be a hyperelliptic curve over $k$. 
To understand whether the map $\phi_g: \hg \ra \ag$ is injective on tangent vectors, we want
to understand the composition
\begin{align*}
	T_{[C]} \hgb k \xra{T_{[C]} \iota_{g,k}} T_{[C]} \mgb k \xra{T_{[C]}\tau_{g,k}} T_{[\tau_g(C)]} \agb k.
\end{align*}
We have already explicitly described $T_{[C]}\tau_{g,k}$ by identifying it as dual to
\eqref{equation:torelli-dual-map} 
(see also \autoref{remark:hyperelliptic-image-char-0} and \autoref{remark:hyperelliptic-image-char-2} below for explicit descriptions of \eqref{equation:torelli-dual-map} in terms of differentials)
so we next want to understand the image of $T_{[C]}\iota_{g,k}$.
Following 
\cite[Chapter 21, \S5-\S6]{arbarelloCG:geomtry-of-algebraic-curves-ii}
we can identify $T_{[C]}\iota_{g,k}$ as follows.

Let $C$ be a hyperelliptic curve as above and $\scl$ the unique isomorphism class of invertible sheaf giving
rise to a hyperelliptic map $C \ra \bp^1$.
This $\scl$ has nontrivial automorphisms, but the automorphisms will be irrelevant for the ensuing computations.
Let
\begin{align*}
	\mu_0 : H^0(C, \scl) \otimes H^0(C, \omega_C \otimes \scl^\vee) \ra H^0(C, \omega_C)
\end{align*}
denote the multiplication map.
Then, 
as in \cite[Chapter 21, (6.1)]{arbarelloCG:geomtry-of-algebraic-curves-ii},
there is a canonical map 
\begin{align}
	\label{equation:mu-1}
	\mu_1 : \ker \mu_0 \ra H^0(C, \omega_C^{\otimes 2}).
\end{align}
To describe this map explicitly, recall that the differential is a map
$\sco_C \to \Omega_C = \omega_C$.
This induces a map $\scl \to \omega_C \otimes \scl$,
and hence a map $H^0(C, \scl) \to H^0(C, \omega_C \otimes \scl)$
which sends $r \mapsto dr$.
One can describe \eqref{equation:mu-1} explicitly as the map sending $r \otimes s \mapsto dr \cdot s$.
It is not obvious this is well-defined, but the well definedness along with this description is verified in \cite[Chapter 21, p. 810]{arbarelloCG:geomtry-of-algebraic-curves-ii}.
(The language used there makes it seem like they are working over $\bc$, but their proof works equally well over any field.)

\begin{proposition}
	\label{lemma:dual-tangent-space-description}
	For $C$ a hyperelliptic curve,
	the composition 
	\begin{align}
		\label{equation:tangent-space-composition}
		T_{[C]} \hgb k \ra T_{[C]} \mgb k \ra T_{[\tau_{g}(C)]} \agb k
	\end{align}
is dual to the composition
\begin{align}
	\label{equation:ag-to-hg-dual}
	H^0(C, \omega_C^{\otimes 2})/\im \mu_1 \xleftarrow{(T_{[C]}\iota_{g,k})^\vee} H^0(C, \omega_C^{\otimes 2}) \xleftarrow{(T_{[C]}\tau_{g,k})^\vee} \sym^2 H^0(C, \omega_C).
\end{align}
Hence, the dimension of the kernel of \eqref{equation:tangent-space-composition} agrees with the dimension of the cokernel of \eqref{equation:ag-to-hg-dual}.
In particular, \eqref{equation:tangent-space-composition} is injective if and only if \eqref{equation:ag-to-hg-dual} is surjective.
Further, $\coker \mu_0 = 0, \dim \im \mu_1 = g - 2,$ and $\dim \im (T_{[C]} \tau_{g,k})^\vee = 2g-1$.
\end{proposition}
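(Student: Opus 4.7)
The plan is to dualize the tangent composition using Serre duality and the Arbarello--Cornalba--Griffiths identification of $T_{[C]}\hgb k$, then to read off all numerical claims from Riemann--Roch on the hyperelliptic cover $f\colon C \to \bp^1$.

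\emph{Step 1: identifying the dual composition.} I would begin by recording the Serre duality isomorphism $T_{[C]}\mgb k \simeq H^1(C,T_C) \simeq H^0(C,\omega_C^{\otimes 2})^{\vee}$. \autoref{subsection:inputs} already identifies $(T_{[C]}\tau_{g,k})^{\vee}$ with the multiplication map $\sym^2 H^0(C,\omega_C) \to H^0(C,\omega_C^{\otimes 2})$, so the only thing left is to describe $(T_{[C]}\iota_{g,k})^{\vee}$. The content of \cite[Chapter~21, \S\S5--6]{arbarelloCG:geomtry-of-algebraic-curves-ii}, cited just above the proposition, is precisely that under Serre duality the image of $T_{[C]}\iota_{g,k}$ in $T_{[C]}\mgb k$ is the annihilator of $\im\mu_1 \subset H^0(C,\omega_C^{\otimes 2})$; dualizing this inclusion gives the quotient map $H^0(C,\omega_C^{\otimes 2}) \to H^0(C,\omega_C^{\otimes 2})/\im\mu_1$. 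Composing yields the diagram \eqref{equation:ag-to-hg-dual}.

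\emph{Step 2: kernel--cokernel duality.} Once the tangent composition is the $k$-linear dual of \eqref{equation:ag-to-hg-dual}, the standard fact that for a linear map between finite-dimensional vector spaces the kernel of the dual has the same dimension as the cokernel of the original map gives both the dimension equality and the injectivity-iff-surjectivity equivalence.

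\emph{Step 3: dimension computations via $\bp^1$.} For the three numerical assertions I would exploit the hyperelliptic identifications $\scl \simeq f^{*}\sco_{\bp^1}(1)$ and $\omega_C \simeq f^{*}\sco_{\bp^1}(g-1)$ (the latter because the canonical map factors through $f$ followed by the $(g-1)$-Veronese), together with $f_{*}\sco_C \simeq \sco_{\bp^1} \oplus \sco_{\bp^1}(-g-1)$ and the projection formula, to obtain
\[
H^0(C,\scl^{\otimes m}) \simeq H^0(\bp^1,\sco(m)) \oplus H^0(\bp^1,\sco(m-g-1)).
\]
Specializing at $m = 1, g-2, g-1, 2g-2$ gives $h^0(\scl)=2$, $h^0(\omega_C \otimes \scl^{\vee})=g-1$, $h^0(\omega_C)=g$, and $h^0(\omega_C^{\otimes 2})=3g-3$. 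Under these identifications $\mu_0$ becomes the multiplication of polynomials $H^0(\bp^1,\sco(1)) \otimes H^0(\bp^1,\sco(g-2)) \to H^0(\bp^1,\sco(g-1))$, which is visibly surjective, so $\coker\mu_0=0$ and $\dim\ker\mu_0 = 2(g-1)-g = g-2$. The same reasoning shows $\sym^2 H^0(\omega_C) \to H^0(\omega_C^{\otimes 2})$ factors through and surjects onto the $(2g-1)$-dimensional subspace $f^{*}H^0(\bp^1,\sco(2g-2))$, yielding $\dim\im(T_{[C]}\tau_{g,k})^{\vee} = 2g-1$.

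\emph{Step 4: injectivity of $\mu_1$.} It remains to show $\dim\im\mu_1 = g-2$, which since $\dim\ker\mu_0 = g-2$ amounts to $\mu_1$ being injective on $\ker\mu_0$. I expect this to be the main obstacle, since the explicit formula $\mu_1(r\otimes s) = dr\cdot s$ must be unwound on a hyperelliptic equation and the argument needs to survive in characteristic $2$, where the differential map on the coordinate function on $\bp^1$ vanishes on the $y$-component. The plan is to choose an explicit $k$-basis of $\ker\mu_0$ coming from the Koszul syzygies of $H^0(\bp^1,\sco(1)) \otimes H^0(\bp^1,\sco(g-2)) \to H^0(\bp^1,\sco(g-1))$, compute $\mu_1$ on this basis in local coordinates on $C$, and verify linear independence of the resulting sections of $\omega_C^{\otimes 2}$ directly; the characteristic $2$ case should be handled separately by choosing coordinates adapted to the Artin--Schreier form of the hyperelliptic equation.
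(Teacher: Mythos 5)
Your overall strategy matches the paper's: dualize via Serre duality, describe the map to $T_{[C]}\mgb k$ and identify the image of $T_{[C]}\iota_{g,k}$ via the Arbarello--Cornalba--Griffiths construction $\Sigma_L$ and the maps $\mu, \mu_1$, then read off dimensions by pushing forward to $\bp^1$ and using $\omega_C \simeq \scl^{\otimes (g-1)}$. However, two points in your plan need correction.

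First, in Step 1 you attribute to the ACG reference the \emph{full} identity that $T_{[C]}\iota_{g,k}(T_{[C]}\hgb k)$ is exactly the annihilator of $\im\mu_1$. What the ACG machinery (via the rank-$2$ sheaf $\Sigma_L$, the map $\mu$, and the criterion for sections of $L$ to extend) directly provides is only the \emph{forward} containment $T_{[C]}\hgb k \subset (\im\mu_1)^\perp$: if a first-order deformation of $C$ carries a compatible deformation of the hyperelliptic line bundle, then tracing through the commutative square relating $\mu$ and $\mu_1$ via $\sigma^\vee$ shows it pairs to zero with $\im\mu_1$. Getting equality is a separate step. The paper closes the gap by a dimension count: $\dim T_{[C]}\hgb k = 2g-1$ (this uses the smoothness of $\hg$ of relative dimension $2g-1$ established earlier), while $\dim(\im\mu_1)^\perp = (3g-3)-(g-2) = 2g-1$, the latter requiring you to \emph{first} prove $\dim\im\mu_1 = g-2$, i.e.\ that $\mu_1$ is injective and $\dim\ker\mu_0 = g-2$. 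So your Step 1 and Step 4 are logically entangled, and you cannot simply cite away Step 1.

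Second, your Step 4 worry about characteristic $2$ is unfounded, and the anticipated ``main obstacle'' evaporates. The Koszul syzygies $\{1 \otimes x^j - x \otimes x^{j-1}\}_{1 \leq j \leq g-2}$ span $\ker\mu_0$, and applying the formula $\mu_1(r\otimes s) = dr\cdot s$ gives $\mu_1(1 \otimes x^j - x \otimes x^{j-1}) = -\,dx\cdot x^{j-1}$. These $g-2$ elements $dx, x\,dx, \ldots, x^{g-3}\,dx$ are visibly linearly independent in $H^0(C,\omega_C^{\otimes 2})$, and the whole computation only involves the coordinate $x$ pulled back from $\bp^1$ and its differential $dx$; neither the $y$-coordinate nor the defining equation of $C$ enters, so there is no Artin--Schreier subtlety and the argument is uniform across characteristics. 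The characteristic-$2$ behavior that genuinely differs from characteristic $\neq 2$ is not the injectivity of $\mu_1$ but the relative position of $\im\mu_1$ and $\im(T_{[C]}\tau_{g,k})^\vee$ inside $H^0(C,\omega_C^{\otimes 2})$, which is the content of the separate lemmas in the next two sections, not of this proposition.
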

\begin{proof}
Let $C$ a smooth projective geometrically connected curve and $L$ an invertible sheaf on $C$.
Then, $C$ has a rank $2$ locally free sheaf, $\Sigma_L$, as defined in \cite[p. 804]{arbarelloCG:geomtry-of-algebraic-curves-ii},
such that $H^1(C, \Sigma_L)$ parameterizes first order deformations of the pair $(C,L)$
\cite[Chapter 21, Proposition 5.15]{arbarelloCG:geomtry-of-algebraic-curves-ii}.
Further, as described in \cite[Chapter 21, (5.24)]{arbarelloCG:geomtry-of-algebraic-curves-ii},
there is a natural map $\mu : H^0(C, L) \otimes H^0(C, \omega_C \otimes L^\vee) \ra H^0(C, \omega_C \otimes \Sigma_L^\vee)$.
The key property of $\mu$ is that if $\alpha \in H^1(C,\Sigma_L)$ corresponds to a first order 
deformation $(\scc, \scl)$ of $(C,L)$,
then all sections of $L$ extend to sections of $\scl$ (meaning that, if $\pi: \scc \ra \spec k[\varepsilon]/(\varepsilon^2)$ is the structure map,
$\pi_* \scl$ is locally free,)
if and only if $\alpha \in (\im \mu)^\perp$ \cite[Chapter 21, Proposition 5.26]{arbarelloCG:geomtry-of-algebraic-curves-ii}.
Here $(\im \mu)^\perp$ denotes the orthogonal subspace to $\im \mu$ under the
Serre duality pairing $H^0(C, \omega_C \otimes \Sigma_L^\vee) \otimes H^1(C, \Sigma_L) \ra k$.

Suppose $C$ is a hyperelliptic curve with corresponding line bundle $L$ defining the hyperelliptic map $C \ra \bp^1$.
Given $\beta \in H^1(C, T_C)$, we obtain a deformation of $C$, corresponding to a curve $\pi: \scc \ra \spec k[\varepsilon]/(\varepsilon^2)$.
Our main goal is to show that $\beta$ corresponds to a hyperelliptic curve precisely when $\beta \in (\im \mu_1)^\perp$.
By definition, if $[\scc]$ is hyperelliptic, there is an invertible sheaf $\scl$ on $\scc$ with $\pi_* \scl$ locally free of rank $2$.
As described above, using
\cite[Chapter 21, Proposition 5.15]{arbarelloCG:geomtry-of-algebraic-curves-ii},
we obtain that the pair $(\scc, \scl)$ corresponds to some $\alpha \in H^1(C, \Sigma_L)$
with $\alpha \in (\im \mu)^\perp$.

We next show that if $\beta$ corresponds to a hyperelliptic curve, then $\beta \in (\im \mu_1)^\perp$. 
In other words, we will show $T_{[C]} \hgb k \subset \left( \im \mu_1 \right)^\perp$.
By \cite[Chapter 21, Proposition 5.15]{arbarelloCG:geomtry-of-algebraic-curves-ii}, there is a natural map of sheaves
$\Sigma_L \ra T_L$ inducing the map $\sigma : H^1(C, \Sigma_L) \ra H^1(C, T_L)$ which sends the deformation $[(\scc, \scl)]$ to the deformation $[\scc]$.
In particular, $\sigma(\alpha) = \beta$.
Let $(T_{[C]}\tau_{g,k})^\vee: H^0(C, \omega_C^{\otimes 2}) \ra H^0(C, \omega_C \otimes \Sigma_L^\vee)$ denote the map which is Serre dual to $\sigma$.
By definition of $\mu_1$ (see \cite[Chapter 21, (6.1)]{arbarelloCG:geomtry-of-algebraic-curves-ii})
we have a commutative diagram
\begin{equation}
	\label{equation:mu-1-square}
	\begin{tikzcd} 
		H^0(C, L)\otimes H^0(C, \omega_C \otimes L^\vee) \ar {r}{\mu}  & H^0(C, \omega_C \otimes \Sigma_L^\vee)  \\
		\ker \mu_0 \ar {r}{\mu_1}\ar{u} & H^0(C, \omega_C^{\otimes 2}) \ar{u}{\sigma^\vee}.
\end{tikzcd}\end{equation}
For any $a \in \im \mu$, we have $\langle a, \alpha \rangle = 0$, where $\langle , \rangle $ denotes the pairing from Serre duality.
Therefore, for $b \in \im \mu_1$, commutativity of \eqref{equation:mu-1-square} implies $\langle \sigma^\vee(b), \alpha \rangle = 0$.
Functoriality of Serre duality then implies $0= \langle \sigma^\vee(b), \alpha \rangle  = \langle b, \sigma(\alpha) \rangle = \langle b,\beta \rangle$, and hence
$\beta \in \left( \im \mu_1 \right)^\perp$.

We claim that $\dim \ker \mu_0 = g-2, \dim \im (T_{[C]} \tau_{g,k})^\vee = 2g-1,$ and $\mu_1$ is injective. 
We now explain why verifying these three claims finishes the proof.
First, if $\dim \ker \mu_0 = g - 2$ then $\coker \mu_0 = 0$ because $\mu_0$ is a map from a $2g - 2$ dimensional vector space to a $g$ dimensional vector space.
We next explain why these claims imply $\dim \im \mu_1 = g-2$.
We know $T_{[C]} \hgb k$ is $2g-1$ dimensional (as $\hgb k$ is smooth of dimension $2g-1$ 
	\autoref{proposition:smooth-hg-cover}) and 
$T_{[C]}\hgb k \subset (\im \mu_1)^\perp$ inside the $3g-3$ dimensional vector space $H^0(C, T_C)$, 
as shown above.
Since $\dim \im \mu_1 = g-2$, by dimensional considerations, the containment $T_{[C]} \hgb k \subset \left( \im \mu_1 \right)^\perp$ must be an equality.
Therefore, the inclusion $T_{[C]} \hgb k \hookrightarrow H^0(C, T_C)$ is dual to the surjection $H^0(C, \omega_C^{\otimes 2}) \twoheadrightarrow H^0(C, \omega_C^{\otimes 2})/\im \mu_1$.

It remains to check $\dim \ker \mu_0 = g-2, \dim \im (T_{[C]} \tau_{g,k})^\vee = 2g-1,$ and $\mu_1$ is injective. 
First, let us check $\dim \ker \mu_0 = g-2$.
Because $C$ is hyperelliptic, we have $\omega_C = L^{\otimes (g-1)}$. In particular, $\omega_C \otimes L^\vee \simeq L^{\otimes (g-2)}$
and so $\mu_0$ is given by the map $H^0(C, L) \otimes H^0(C, L^{\otimes (g-2)}) \ra H^0(C, L^{\otimes (g-1)})$.
Letting $\left\{ 1,x \right\}$ denote a basis for $H^0(C, L)$, we see $\left\{ 1, x, \ldots, x^i \right\}$ is a basis
for $H^0(C, L^{\otimes i})$.
Applying this when $i = g-2$ and $i = g-1$, we find $\mu_0$ is surjective, so $\dim \ker \mu_0 = g-2$.
Explicitly, we see $\ker \mu_0$ is generated by $\left\{ 1 \otimes x^j - x \otimes x^{j-1} \right\}_{1 \leq j \leq g-2}$.

Next, we verify $\dim \im (T_{[C]} \tau_{g,k})^\vee = 2g-1$. As in the previous paragraph, we may choose a basis $\left\{ 1, x \right\}$ for $H^0(C, L)$ so that
$1, x, \ldots, x^{g-1}$ is a basis for $H^0(C, \omega_C)$. Then, the image of the multiplication map $(T_{[C]} \tau_{g,k})^\vee$ is spanned by
$1, x, \ldots, x^{2g-2}$, which has dimension $2g-1$.

To conclude, we show $\mu_1$ is injective.
Recall $\left\{ 1 \otimes x^j - x \otimes x^{j-1} \right\}_{1 \leq j \leq g-2}$ spans $\ker \mu_0$, as shown above,
and recall that $\mu_1$ is given by $r \otimes s \mapsto dr \otimes s$
\cite[Chapter 21, (6.6)]{arbarelloCG:geomtry-of-algebraic-curves-ii}, (alternatively, see \cite[p. 813-p. 814]{arbarelloCG:geomtry-of-algebraic-curves-ii}).
Therefore, $\im \mu_1$ is spanned by $dx \cdot x^{j-1}$ for $1 \leq j \leq g-2$, which are independent elements of $H^0(C, \omega_C \otimes L^{\otimes (g-1)}) \simeq H^0(C, \omega_C^{\otimes 2})$.
Hence, $\mu_1$ is injective.
\end{proof}

\section{Hyperelliptic curves in characteristic not 2}
\label{section:char-not-2}
The key to analyzing the map induced by $\phi_{g,k}$ on tangent spaces for $\chr(k) \neq 2$ is \autoref{proposition:char-0-tangent-space} below.
Let $k$ be an algebraically closed field of characteristic $p$ (allowing $p = 0$)
with $p \neq 2$.
Before proving injectivity, we set up some notation.

\subsection{Hyperelliptic differentials in characteristic not $2$}
	\label{remark:hyperelliptic-image-char-0}
Every hyperelliptic curve $C$ over an algebraically closed field $k$ of characteristic not $2$
can be expressed as the proper regular model of the affine curve $y^2 = f$
for $f \in k[x]$ a polynomial of degree $2g + 1$ with no repeated roots.
We can choose a basis of differentials for $C$ of the form
\begin{equation}
	\frac{dx}{y} , \frac{x \, dx}{y}, \ldots, \frac{x^{g-1} \, dx}{y}.
	\label{equation:char0-diff-basis}
\end{equation}
where here $x$ and $y$ are viewed as rational functions and $dx$ is viewed as a rational section of $H^0(C, \omega_C)$.

In the above basis, the multiplication map \eqref{equation:torelli-dual-map} above (which is dual to $T_{[C]} \mgb k \ra T_{[\tau_g(C)]} \agb k$)
has image
	\begin{align}
		\label{equation:image-char-0-mult-basis}
		\frac{(dx)^2}{y^2}, \ldots, \frac{x^{2g-2}\,(dx)^2}{y^2}.
	\end{align}
	Written another way, the basis is
	$\frac{1}{f}\,(dx)^2, \ldots, \frac{x^{2g-2}}{f}\,(dx)^2$.
	This is just what one obtains by multiplying together pairs of functions from the
	above described basis \eqref{equation:char0-diff-basis}.
	In particular, the image is a $2g-1$ dimensional subspace
	of the $3g-3$ dimensional vector space
	$H^0(C, \omega_C^{\otimes 2})$.

\subsection{Computing the tangent map in characteristic not $2$}
We now use our explicit description of the differentials to show 
\eqref{equation:ag-to-hg-dual} is surjective when the characteristic is not $2$.
\begin{lemma}
	\label{proposition:char-0-tangent-space}
	For $C$ a hyperelliptic curve over an algebraically closed field $k$ of characteristic $p \neq 2$, the composition \eqref{equation:ag-to-hg-dual}
	is surjective.
\end{lemma}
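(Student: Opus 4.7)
The plan is to reduce surjectivity to a dimension count and complete it via the hyperelliptic involution.

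By \autoref{lemma:dual-tangent-space-description}, $\dim \im \mu_1 = g-2$ and $\dim \im (T_{[C]}\tau_{g,k})^\vee = 2g-1$, and these sum to $3g-3 = \dim H^0(C,\omega_C^{\otimes 2})$. Hence surjectivity of \eqref{equation:ag-to-hg-dual} is equivalent to the triviality of $\im (T_{[C]}\tau_{g,k})^\vee \cap \im \mu_1$. I would check this vanishing in characteristic $\neq 2$ using the hyperelliptic involution $\iota:(x,y)\mapsto(x,-y)$, which acts diagonalizably on $H^0(C,\omega_C^{\otimes 2})$ with $\pm 1$-eigenspaces. A pole-order count at the point at infinity on $y^2=f(x)$ shows $H^0(C,\omega_C^{\otimes 2})^+$ has basis $\left\{\frac{x^i(dx)^2}{y^2}:0\leq i\leq 2g-2\right\}$ of dimension $2g-1$, while $H^0(C,\omega_C^{\otimes 2})^-$ has basis $\left\{\frac{x^j(dx)^2}{y}:0\leq j\leq g-3\right\}$ of dimension $g-2$.

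Since the basis elements $x^i dx/y$ of $H^0(C,\omega_C)$ from \autoref{remark:hyperelliptic-image-char-0} are all $\iota$-anti-invariant, pairwise products are $\iota$-invariant, so $\im(T_{[C]}\tau_{g,k})^\vee \subseteq H^0(C,\omega_C^{\otimes 2})^+$, and by dimension this is an equality. It then suffices to show $\im\mu_1 \subseteq H^0(C,\omega_C^{\otimes 2})^-$; equality will follow automatically by dimension, yielding the desired decomposition $H^0(C,\omega_C^{\otimes 2}) = \im(T_{[C]}\tau_{g,k})^\vee \oplus \im\mu_1$.

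For the remaining containment the key observation is a parity mismatch: $\scl$ is pulled back along the hyperelliptic quotient $C \to \bp^1$, so every section of $\scl$ is $\iota$-invariant, while $H^0(C,\omega_C \otimes \scl^\vee)$ embeds as a subspace of $H^0(C,\omega_C)$ (namely the differentials vanishing to order $\geq 2$ at infinity) and so is entirely $\iota$-anti-invariant. Thus every simple tensor in $H^0(C,\scl) \otimes H^0(C,\omega_C \otimes \scl^\vee)$ is $\iota$-anti-invariant under the diagonal action, making $\ker \mu_0$ entirely a $(-1)$-eigenspace. Since $\mu_1$ of \autoref{lemma:dual-tangent-space-description} is constructed canonically from $(C,\scl)$ and $\iota$ preserves both, $\mu_1$ is $\iota$-equivariant, so $\im\mu_1 \subseteq H^0(C,\omega_C^{\otimes 2})^-$. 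The main obstacle is justifying this $\iota$-equivariance precisely: $\mu_1$'s definition via the first-jet bundle of $\scl$ a priori depends on a local choice of connection, and one must check that the cancellation of the connection-dependent term on $\ker\mu_0$ is compatible with the $\iota$-action inherited from the canonical identification $\iota^*\scl \cong \scl$.
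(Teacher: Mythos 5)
Your proof is essentially the same argument as the paper's, just repackaged conceptually. The paper writes down an explicit basis of $\ker\mu_0$, applies the formula $\mu_1(\alpha\otimes\beta)=d\alpha\cdot\beta$, and observes that the resulting $g-2$ forms $\frac{x^{j}(dx)^2}{y}$ ($0\le j\le g-3$) together with the $2g-1$ forms $\frac{x^i(dx)^2}{y^2}$ in the image of $\sym^2 H^0(\omega_C)$ span $H^0(\omega_C^{\otimes 2})$ because they differ in $y$-parity; your eigenspace decomposition for $\iota:y\mapsto -y$ is precisely this parity observation. The one point worth clarifying: the ``obstacle'' you flag at the end — that $\mu_1$'s definition via the first-jet bundle might not obviously be $\iota$-equivariant — dissolves once you invoke the explicit description $\mu_1(\alpha\otimes\beta)=d\alpha\cdot\beta$ on $\ker\mu_0$ (recorded in the paper via \cite[Chapter 21, (6.6)]{arbarelloCG:geomtry-of-algebraic-curves-ii}), since $d$ commutes with $\iota^*$. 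You can either cite that formula and note equivariance is then immediate, or simply compute $\mu_1$ on the explicit basis of $\ker\mu_0$ as the paper does, in which case you never need the equivariance statement at all. As written your argument leaves that step hanging; closing it is a one-line matter, but it should be closed.
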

\begin{proof}
	We have an explicit understanding of the image of \eqref{equation:torelli-dual-map}
from \autoref{remark:hyperelliptic-image-char-0}.
If we can also explicitly describe $\im \mu_1$, then we will be able to determine
surjectivity of the composite map \eqref{equation:ag-to-hg-dual}.
To start, let's describe $\mu_0$, using the notation from \autoref{remark:hyperelliptic-image-char-0}.
Letting $\{1, x\}$ denote a basis of $\scl$, we find that $\{\frac{dx}{y},\frac{x\,dx}{y} \ldots, \frac{x^{g-2} \,dx}{y}\}$ is a basis for $H^0(C, \omega_C \otimes \scl^\vee)$.
Then, it follows that
\begin{align}
	\label{equation:ker-mu-0-char-0}
	x \otimes \frac{dx}{y} - 1 \otimes \frac{x \, dx}{y}, x \otimes \frac{x \, dx}{y} - 1 \otimes \frac{x^2 \, dx}{y}, \ldots, 
	x \otimes \frac{x^{g-3}\, dx}{y} - 1 \otimes \frac{x^{g-2} \, dx}{y}
\end{align}
is a basis for $\ker \mu_0$.

The map $\mu_1$ is given explicitly
by sending $\alpha \otimes \beta \mapsto d\alpha \cdot \beta$
\cite[Chapter 21, (6.6)]{arbarelloCG:geomtry-of-algebraic-curves-ii}.
Therefore, applying $\mu_1$ to \eqref{equation:ker-mu-0-char-0}, we find $\im \mu_1$ is generated by 
\begin{align}
	\label{equation:im-mu1-char-0}
	\frac{(dx)^2}{y},\frac{x(dx)^2}{y} \ldots, \frac{x^{g-3} (dx)^2}{y}.
\end{align}
On the other hand, the image of the map $H^0(C, \omega_C^{\otimes 2}) \leftarrow \sym^2 H^0(C, \omega_C)$ from \eqref{equation:torelli-dual-map}
is given in \eqref{equation:char0-diff-basis}.
Since together, the union of the $2g-1$ elements of \eqref{equation:char0-diff-basis}
and the $g-2$ elements of \eqref{equation:im-mu1-char-0} span $H^0(C, \omega_C^{\otimes 2})$,
it follows that the composite map
\eqref{equation:ag-to-hg-dual} is surjective.
\end{proof}

\section{Hyperelliptic curves in characteristic $2$}
\label{section:char-2}

As in \autoref{section:char-not-2}, to check injectivity of $\phi_g$ on tangent vectors, we may assume our base field $k$
is algebraically closed.
To conclude the proof of \autoref{theorem:hyperelliptic}, we only need prove \autoref{proposition:char-2-tangent-space} below.
We now set up notation for the proof.
The key difference in characteristic $2$ is that hyperelliptic curves
cannot be described in terms of an equation of the form $y^2 = f$, for $f \in k[x]$, of degree more than $1$, as any such curve
would be singular at the roots of $\frac{\partial f}{\partial x}$.
We now describe a general form for hyperelliptic curves in characteristic $2$.

\subsection{Equations for hyperelliptic curves in characteristic 2}
	\label{remark:char-2-form}
We start by reviewing a standard normal form for hyperelliptic curves in characteristic $2$.
This is stated in 
\cite[Notation 1.1]{elkinP:ekedahl-oort-strata-of-hyperelliptic-curves-in-characteristic-2},
and we provide some more details here.

\begin{lemma}
	\label{lemma:char-2-form}
	Over an algebraically closed field $k$ of characteristic $2$, every hyperelliptic curve
of genus at least $2$ can be written as the projective regular model of a curve of the form 
$y^2 - y = f$, for $f \in k(x)$. 
For a general such curve, $f$ can be chosen in the form
\begin{align}
	\label{equation:char-2-form}
f = \alpha_0 x + \frac{\alpha_1}{x-a_1} + \cdots + \frac{\alpha_g}{x-a_g} 
\end{align}
with $\alpha_0, \ldots, \alpha_g, a_1, \ldots, a_g \in k$.
The corresponding curve $C$ is ramified over $\bp^1$ at the preimages of $a_1, \ldots, a_g$, and $\infty$,
with ramification order $2$ at each such point.
\end{lemma}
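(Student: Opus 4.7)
The plan is to combine Artin-Schreier theory with a Riemann-Hurwitz genus computation. The first step is to check that the hyperelliptic map $\pi: C \to \bp^1$ is separable. If it were purely inseparable, then $k(C)$ would be a purely inseparable degree $2$ extension of $k(x)$, forcing $k(C) = k(x)^{1/2} = k(x^{1/2})$ since $k$ is perfect; but then $C \cong \bp^1$, contradicting $g \geq 2$. Hence $k(C)/k(x)$ is separable of degree $2$, and Artin-Schreier theory identifies it with $k(x)(y)$ where $y^2 - y = f$ for some $f \in k(x)$, modulo the equivalence $f \sim f + h^2 - h$ for $h \in k(x)$.

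Next I would reduce $f$ to a normal form with only odd-order poles. At a pole of even order $2m$ with leading term $c^2 t^{-2m}$ (where $t$ is a uniformizer at the pole and $c \in k$ exists because $k$ is algebraically closed), the replacement $f \mapsto f - \bigl( (c t^{-m})^2 - c t^{-m} \bigr)$ strictly lowers the order of that pole without introducing poles elsewhere. Iterating this step at each pole, I may assume every pole of $f$ has odd order. In this normal form, the projective regular model of $y^2 - y = f$ is smooth with ramification locus equal to the pole set of $f$, and standard local Artin-Schreier theory gives different exponent $d + 1$ at a pole of odd order $d$.

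Now I would apply Riemann-Hurwitz. If the odd-order poles of $f$ are $d_1, \ldots, d_r$, then
\[
2g - 2 \,=\, -4 \,+\, \sum_{i=1}^r (d_i + 1), \qquad \text{i.e.,} \qquad 2g + 2 \,=\, \sum_{i=1}^r (d_i + 1).
\]
For a \emph{general} hyperelliptic curve of genus $g$, no pole has order $\geq 3$, since imposing such a higher-order pole is a codimension condition on the parameter space. Hence all poles are simple and there are exactly $g + 1$ of them. Using an automorphism of $\bp^1$ I may place one of the poles at $\infty$ and the remaining ones at distinct points $a_1, \ldots, a_g \in \ba^1(k)$. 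A partial fraction decomposition then yields
\[
f \,=\, \alpha_0 x \,+\, c \,+\, \sum_{i=1}^g \frac{\alpha_i}{x - a_i},
\]
with $\alpha_0, \alpha_1, \ldots, \alpha_g \in k^\times$ (so that each listed point is genuinely a pole) and $c \in k$. The constant $c$ is absorbed by the Artin-Schreier replacement $f \mapsto f + (h^2 - h)$ with $h \in k$ chosen so that $h^2 - h = -c$, which is solvable since $k$ is algebraically closed. This produces the stated form.

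The ramification claim is then immediate: the cover $y^2 - y = f$ ramifies exactly at the poles of $f$, and since it has degree $2$ the ramification index at each such point is $2$, giving the ramification locus $\{\infty, a_1, \ldots, a_g\}$. The main technical hurdle is the even-to-odd pole reduction in step two, but it is a well-known manipulation; the remaining work is just matching the Riemann-Hurwitz bookkeeping with the partial fraction expression.
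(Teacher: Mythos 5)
Your proof is correct but takes a genuinely different route from the paper. You start directly from Artin--Schreier theory, first verifying separability of the hyperelliptic map (a point the paper leaves implicit, handled there by smoothness of the curve in $\mathbb F_{g+1}$ forcing $\beta_1 \neq 0$), then apply the standard reduction to odd-order poles, and use Riemann--Hurwitz with the different exponent $d+1$ at a pole of odd order $d$ to obtain $2g+2 = \sum (d_i+1)$; a genericity argument then pins the configuration down to $g+1$ simple poles. The paper instead begins with the embedding of $C$ into the Hirzebruch surface $\mathbb F_{g+1}$ (from \autoref{lemma:hg-cover}), obtains an equation $\beta_0 y^2 + \beta_1 yz + \beta_2 z^2 = 0$ with explicit degree bounds, normalizes via concrete changes of variables (dividing through by $b^2$, then substitutions $y \mapsto (y+1)/(x-a_i)^{r_i}$ to remove even-order denominators), and gets the ramification count from $\deg \Omega_{C/\bp^1} = 2g+2$ rather than from Riemann--Hurwitz. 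Both arguments are sound; yours is more self-contained from the function-field viewpoint and leverages standard Artin--Schreier local theory, while the paper's fits the Hirzebruch-surface framework running through the article. One spot where you could tighten up: the assertion that higher-order poles are a codimension condition would be more convincing with the dimension count made explicit --- all-simple-pole data give $2g+1$ parameters $(\alpha_0, \dots, \alpha_g, a_1, \dots, a_g)$ minus the $2$-dimensional stabilizer of $\infty$ in $\pgl_2$, matching $\dim \hgb k = 2g-1$, whereas allowing a pole of order $3$ forces $r = g$ and thus a strictly smaller family.
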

\begin{remark}
	\label{remark:}
	A hyperelliptic curve can be written in this form precisely if it is ordinary, though we will not need this fact.
\end{remark}
\begin{proof}
	To start, we claim the curve can be written in the form $ay^2 + by + c = 0$ with $a \in k, b \in k[x], c \in k[x]$.
	To see this, given a hyperelliptic curve $C \to \bp^1$, we know from \autoref{lemma:hg-cover}
	it can be described as a closed subscheme of $\mathbb F_{g+1}$ in the linear system $2E + (2g+2)F$.
	This implies the curve can be written as $\beta_0 y^2 + \beta_1 yz + \beta_2 z^2$ with $\beta_i \in H^0(\bp^1, \sco(i \cdot (g+1)))$.
	Restricting this to $\ba^1 \subset \bp^1$ gives the claim.	

	Next, we show one may change variables to put a generic such curve in the form of \eqref{equation:char-2-form}.
	By our generality assumption, we may assume $b$ has $g + 1$ distinct roots.
	Then, by applying a change of variables, we may send one of the roots of $b$ to $\infty$, and hence assume that $b$ has degree $g$ as a polynomial in $x$ and has $g$ distinct roots.
	By scaling $y$ by $1/\sqrt{a}$, we may assume $a = 1$ and the curve is given by $y^2 + byz + cz^2$, or equivalently just $y^2 + by + c$ in affine coordinates.
	Next, we may replace $y$ by $by$ and divide the whole equation by $b^2$ so as to assume that
	the equation is of the form $y^2 - y = \frac{c}{b^2}$.
	We next
	apply a partial 
	fraction decomposition for $\frac{c}{b^2}$
so as to write $\frac{c}{b^2} = c_2 x^2 + c_1x + c_0 + \sum_i \frac{p_i}{(x-a_i)^{2r_i}}$, where $b = \prod (x-a_i)^{r_i}$ for distinct $a_i$ and $p_i$ are polynomials in $x$ of degree at most $2r_i$.
	Observe that the change of variables $y \mapsto \frac{y+1}{(x-a_i)^{r_i}}$
	sends $y^2 - y \mapsto \left( \frac{y}{(x-a_i)^{r_i}} \right)^2 - \frac{y}{(x-a_i)^{r_i}} + \left( \frac{1}{\left( x-a_i \right)^{2r_i}} - \frac{1}{(x-a_i)^{r_i}} \right)$.
	Hence, renaming $\frac{y}{(x-a_i)^{r_i}}$ to $y$, we can perform such changes of variables to 
	cancel out the highest even power of any denominator appearing in the partial fraction decomposition of $\frac{c}{b^2}$.
	By a similar change of variables, we may modify the quadraic and linear terms to assume $c_2 = c_0 = 0$.
	Therefore, in the case that $a,b,c$ are general so that $r_i = 1$ for all $i$, it follows that $f$ may be written in the form \eqref{equation:char-2-form}.
	
	To conclude, it remains to compute the ramification points and their ramification orders.
	A general hyperelliptic curve is given by 
	$\beta_0 y^2 + \beta_1 yz + \beta_2 z^2$ with $\beta_i \in H^0(\bp^1, \sco(i \cdot (g+1)))$
	as above, so that $\beta_1$ has $g+1$ distinct roots.
	Smoothness of the curve implies $\beta_1$ and $\beta_2$ have no common roots.
	We can also assume that one of the roots is over $\infty \in \bp^1$.
	Then, for $b \in k[x]$ the dehomogenized version of $\beta_1$, $b$ has $g$ distinct simple roots. By computing the derivative of
	$\beta_0 y^2 + \beta_1 yz + \beta_2 z^2$ with respect to $y$, we see $C \to \bp^1$ is precisely ramified at the roots of $\beta_1$.

	We claim that under the setup of the previous paragraph,
	the ramification orders at the preimages of $a_1, \ldots, a_g, \infty$ is $2$, and $C \to \bp^1$ has no other ramification points.
	To see this, because we have a degree $2$ map and we are in characteristic $2$, the ramification order is at least $2$ at each such point.
	Observe also that the sum of the ramification orders at all ramified points is 
	$\deg \Omega_{C/\bp^1} = 2g + 2$
	and there are $g+1$ points $a_1, \ldots, a_g$, and $\infty$.
	Hence, each point must have ramification order exactly $2$, and these must account for all the ramification points of $C \to \bp^1$.
\end{proof}

\subsection{Hyperelliptic differentials in characteristic $2$}
	\label{remark:hyperelliptic-image-char-2}
Now let's proceed to explicitly write down the differentials, as in the characteristic $0$ case.

	Let $k$ be a field of characteristic $2$ and 
	let $C$ be a hyperelliptic curve given as the projective regular model of the curve defined by the equation $y^2 - y = f$, for 
	$f = \alpha_0 x + \frac{\alpha_1}{x-a_1} + \cdots + \frac{\alpha_g}{x-a_g} \in k(x).$
	As described in \autoref{lemma:char-2-form}, $\pi: C \ra \bp^1$ is ramified at the preimages of 
	the points $a_1, \ldots, a_g$ and $\infty$
	(where $\infty$ is the point in the complement of $\spec k[x] \simeq \ba^1 \subset \bp^1$).

	By the assumption
	that $f$ is as in \eqref{equation:char-2-form}, it follows from
	\autoref{lemma:char-2-form} that
	there are $g+1$ ramification points of $\pi$, and the differential $dx$ is ramified to order $2$ at 
	the preimages of $V(x-a_1), \ldots, V(x-a_g)$.
	In other words, the relative sheaf of differentials $\Omega_\pi$ is a skyscraper sheaf at $\pi^{-1}(V(x - a_i))$ with degree $2$ at $\pi^{-1}(V(x - a_i))$,
	for $1 \leq i \leq g$.
	Since $\deg dx = 2g - 2$, we conclude that $dx$ has a pole of order $2$ at the unique point over $\infty \in \bp^1$.
	Further, the function $x-a_i$ vanishes to order $2$ at the preimage of $\pi^{-1}(V(x-a_i))$ and therefore has a pole of order $2$ at $\pi^{-1}(\infty)$.
	It follows that the functions 
	\begin{align*}
	\omega_1 := \frac{dx}{x-a_1}, \ldots, \omega_g := \frac{dx}{x-a_g}
	\end{align*}
	form a basis of $H^0(C, \omega_C)$.

	In the above basis, the map \eqref{equation:torelli-dual-map}
	\begin{align*}
		H^0(C, \omega_C^{\otimes 2}) \leftarrow \sym^2 H^0(C, \omega_C).
	\end{align*}
has image spanned by elements of the form 
	\begin{align}
		\label{equation:image-char-2-mult-basis}
		\frac{(dx)^2}{(x-a_i)(x-a_j)}
	\end{align}
	Of course, these elements will not be independent, but they will necessarily span a $2g-1$ dimensional subspace of 
	$H^0(C, \omega_C^{\otimes 2})$ by \autoref{lemma:dual-tangent-space-description}.

\subsection{Computing the map on tangent spaces in characteristic $2$}

\begin{lemma}
	\label{proposition:char-2-tangent-space}
	For $C$ a hyperelliptic curve of genus $g \geq 2$ over an algebraically closed field $k$ of characteristic $2$, 
	the composition \eqref{equation:ag-to-hg-dual}
	has cokernel of dimension $g -2$.
\end{lemma}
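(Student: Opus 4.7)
My plan is to prove the containment $\im \mu_1 \subset \im\bigl(\sym^2 H^0(C, \omega_C) \to H^0(C, \omega_C^{\otimes 2})\bigr)$. Once this is established, the image of the composite \eqref{equation:ag-to-hg-dual} coincides with the image of the multiplication map alone, which has dimension $2g - 1$ by \autoref{lemma:dual-tangent-space-description}, so the cokernel has dimension $(3g - 3) - (2g - 1) = g - 2$. This is a striking reversal of the characteristic not $2$ case (\autoref{proposition:char-0-tangent-space}), where $\im \mu_1$ and the multiplication image are complementary subspaces jointly spanning $H^0(C, \omega_C^{\otimes 2})$.

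\textbf{Description of $\ker \mu_0$ and $\mu_1$.} Using the basis $\omega_i = \frac{dx}{x-a_i}$ of $H^0(C, \omega_C)$ from \autoref{remark:hyperelliptic-image-char-2} and the identification $\scl \simeq \sco_C(2 P_\infty)$ with $H^0(C, \scl) = \spn(1, x)$, a local expansion at $P_\infty$ identifies $H^0(C, \omega_C \otimes \scl^\vee)$ with the subspace $\{\sum \lambda_i \omega_i : \sum \lambda_i = 0\}$ of differentials vanishing to order $\geq 2$ at $P_\infty$. The identity $\frac{x}{x - a_i} = 1 + \frac{a_i}{x - a_i}$ gives
\[
x \omega_b = \Bigl(\sum \lambda_i\Bigr) dx + \sum \lambda_i a_i\, \omega_i = \sum \lambda_i a_i\, \omega_i
\]
when $\omega_b = \sum \lambda_i \omega_i$ with $\sum \lambda_i = 0$, so the constraint $x \omega_b \in H^0(C, \omega_C \otimes \scl^\vee)$ becomes $\sum \lambda_i a_i = 0$. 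Thus $\ker \mu_0$ is parameterized by $\omega_b$ subject to both $\sum \lambda_i = 0$ and $\sum \lambda_i a_i = 0$ (dimension $g-2$, matching \autoref{lemma:dual-tangent-space-description}), and since $\mu_1(r \otimes s) = dr \cdot s$ with $d(1) = 0$, the resulting element maps to $dx \cdot \omega_b = \sum \lambda_i \frac{(dx)^2}{x - a_i}$.

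\textbf{The containment via characteristic $2$ partial fractions.} The key identity is
\[
\omega_i \omega_j = \frac{(dx)^2}{(x-a_i)(x-a_j)} = \frac{1}{a_i + a_j} \!\left( \frac{(dx)^2}{x - a_i} + \frac{(dx)^2}{x - a_j} \right), \qquad i \ne j,
\]
valid in characteristic $2$. Thus $\sum_{i<j} c_{ij} \omega_i \omega_j = \sum_k \lambda_k \frac{(dx)^2}{x - a_k}$ with $\lambda_k = \sum_{j \ne k} \frac{c_{kj}}{a_k + a_j}$, and the symmetry $c_{ij} = c_{ji}$ together with $2 = 0$ force $\sum \lambda_k = 0$ automatically. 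Conversely, given any $\lambda$ with $\sum \lambda_k = 0$, setting $c_{1k} = (a_1 + a_k) \lambda_k$ for $k \geq 2$ and all other $c_{ij} = 0$ recovers $\lambda$. Hence $\spn\{\omega_i \omega_j : i \ne j\}$ is precisely the $(g-1)$-dimensional subspace $\{\sum \lambda_k \frac{(dx)^2}{x-a_k} : \sum \lambda_k = 0\}$ of $H^0(C, \omega_C^{\otimes 2})$, which contains $\im \mu_1$ (the codimension-$1$ subspace cut out by the additional condition $\sum \lambda_k a_k = 0$). This yields $\im \mu_1 \subset \im(\text{mult})$ and completes the dimension count.

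\textbf{Main obstacle.} The principal subtlety is the local analysis at the wildly ramified point $P_\infty$: in characteristic $2$, the relation $d(t^{-2}) = 0$ forces $dx$ to have only a pole of order $2$ at $P_\infty$ rather than order $3$, so the leading behavior of $\omega_i$ at $P_\infty$ is independent of $i$. One must verify directly that $\sum \lambda_i \frac{(dx)^2}{x - a_i}$ is regular at $P_\infty$ whenever $\sum \lambda_i = 0$---a strictly weaker hypothesis than defining $\ker \mu_0$---and this regularity, a genuinely characteristic-$2$ phenomenon, is exactly what produces the $(g-2)$-dimensional cokernel.
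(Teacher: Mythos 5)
Your overall strategy---showing $\im\mu_1$ is contained in the image of the multiplication map and then reading off the cokernel dimension---is the same as the paper's, and your explicit computation is correct where it applies. The symmetric description $H^0(C,\omega_C\otimes\scl^\vee) = \{\sum\lambda_i\omega_i : \sum\lambda_i=0\}$, the identification of $\ker\mu_0$ via the additional constraint $\sum\lambda_ia_i=0$, and the characteristic-$2$ partial fraction identity
\[
\omega_i\omega_j = \frac{1}{a_i+a_j}\left(\frac{(dx)^2}{x-a_i}+\frac{(dx)^2}{x-a_j}\right)
\]
are exactly the right ingredients; the paper instead anchors the basis of $H^0(C,\omega_C\otimes\scl^\vee)$ at $a_1$ via $\frac{dx}{(x-a_1)(x-a_j)}$, but the substance of the computation is the same and your version is arguably cleaner.

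There is, however, a genuine gap: your argument assumes $C$ has the normal form of \autoref{lemma:char-2-form} with $g+1$ \emph{distinct} branch points $a_1,\ldots,a_g,\infty$, which holds only for a general (ordinary) hyperelliptic curve in characteristic $2$. For non-ordinary hyperelliptic curves the branch points coalesce, the normal form is different, and in particular your two linear constraints $\sum\lambda_i=0$ and $\sum\lambda_i a_i=0$ need not be independent (if the $a_i$ were not distinct, $\ker\mu_0$ would not be cut out this way). The statement of the lemma, on the other hand, is for \emph{every} hyperelliptic curve over $k$. The paper closes this gap before doing the computation: it first observes from \autoref{lemma:dual-tangent-space-description} (valid for all hyperelliptic curves) that $\dim\im\mu_1 = g-2$ and $\dim\im(T_{[C]}\tau_g)^\vee = 2g-1$, which bounds the cokernel dimension above by $g-2$ at every $[C]$; then, by upper semicontinuity of cokernel dimension, it suffices to establish equality at a general $[C]$---exactly the case your computation handles. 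You need to add this semicontinuity reduction for the proof to cover all hyperelliptic curves.
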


\begin{proof}
	For $C$ any hyperelliptic curve, with notation as in \autoref{lemma:dual-tangent-space-description}, $\dim \im \mu_1 = g-2$ and $\dim \im (T_{[C]}\tau_g)^\vee = 2g - 1.$
	It follows that the composition \eqref{equation:ag-to-hg-dual} has image of dimension at least $(2g-1) - (g-2) = g + 1$.
	Equivalently, 
	\eqref{equation:ag-to-hg-dual}
	has cokernel of dimension at most $g-2$, since $\dim \hgb {\mathbb F_2} = 2g-1 = (g+1)+(g-2)$.
	Therefore, by upper semicontinuity of cokernel dimension, in order to show 
	\eqref{equation:ag-to-hg-dual}
	has cokernel of dimension $g-2$ at all $[C] \in \hgb {\mathbb F_2}$ over a
	field of characteristic $2$, it suffices to show this holds over a general such $[C] \in \hgb {\mathbb F_2}$.

	As described in \autoref{remark:char-2-form}, a general hyperelliptic curve over $k = \ol k$ with $\chr(k) = 2$ can be written in the form 
$y^2 - y = f$, for 
$f = \alpha_0 x+ \frac{\alpha_1}{x-a_1} + \cdots + \frac{\alpha_g}{x-a_g} \in k(x)$,
with elements
$\alpha_0, \ldots, \alpha_g, a_1, \ldots, a_g \in k$ and with $a_1, \ldots, a_g$ distinct.
Following the strategy outlined in \autoref{section:general-setup},
we next determine $\ker \mu_0$ and then $\im \mu_1$ for such curves $C$.

We can choose $1, x-a_1$ as a basis for $H^0(C, \scl)$ identifying $\scl \simeq \sco_C(2 \cdot \pi^{-1}(\infty))$, for $\pi: C \ra \bp^1$ the hyperelliptic map. 
For $1 < i \leq g$, the differential forms $\frac{1}{a_1 - a_i} \left( \omega_1 - \omega_i \right) = \frac{dx}{(x-a_1)(x-a_i)}$ have a zero of order $2$ at $\infty$,
and therefore lie in $H^0(C, \omega_C \otimes \scl^\vee)$.
Because they are also independent,
it follows that
\begin{align}
	\label{equation:char-2-less-l}
	\frac{dx}{(x-a_1)(x-a_2)}, \frac{dx}{(x-a_1)(x-a_3)}, \ldots, \frac{dx}{(x-a_1)(x-a_g)}
\end{align}
form a basis for $H^0(C, \omega_C \otimes \scl^\vee)$.
The kernel of $\mu_0$ is then spanned by elements of the form
\begin{align*}
	(x-a_1) \otimes r_i + 1 \otimes t_i 
\end{align*}
for $1 \leq i \leq g-2$ with $r_i, t_i$ linear combinations of the elements in \eqref{equation:char-2-less-l}.

Therefore, $\im \mu_1$ is generated by $d(x-a_1) \otimes r_i + d(1) \otimes t_i  = dx \otimes r_i$ for $1 \leq i \leq g-2$.
In particular, the $r_i$ are some linear combination of elements appearing in \eqref{equation:char-2-less-l}.
It follows that $dx \otimes r_i$ are also linear combinations of the elements appearing in \eqref{equation:image-char-2-mult-basis}.
Therefore, $\im \mu_1 \subset \im (H^0(C, \omega_C^{\otimes 2}) \leftarrow \sym^2 H^0(C, \omega_C)).$
By \autoref{lemma:dual-tangent-space-description},
$\dim \im \mu_1 = g-2$ and $\dim \im (T_{[C]} \tau_g)^\vee = 2g-1$.
Hence,
the composition \eqref{equation:ag-to-hg-dual} has image of dimension $(2g-1) - (g-2) = g + 1$
and cokernel of dimension $(2g-1) - (g+1) = g-2$.
\end{proof}

\section{Proof of \autoref{theorem:hyperelliptic}}
\label{section:proof-hyperelliptic}

At this point, nearly everything is in place to prove \autoref{theorem:hyperelliptic}.
The work of previous sections will show that $\phi_g$ is a monomorphism over $\spec \bz[1/2]$, but fails to be a monomorphism when restricted to any field $k$ with $\chr(k) = 2$.
In order to show $\phi_g: \hg \ra \ag$ is an immersion over $\spec \bz[1/2]$, we will verify the valuative criterion for immersions (or radimmersions) in \autoref{proposition:valuative}. 
We then use \autoref{corollary:valuative} to deduce $\phi_g$ is an immersion over $\spec \bz[1/2]$.
This valuative criterion loosely says that a monomorphism $f: X \ra Y$ is an immersion
when, given a map from a the spectrum of a discrete valuation ring to $Y$ with its two points factoring through $X$, the map from the spectrum of the discrete valuation ring factors through $X$.

We next set up some notation.
Let $\overline{\mathscr M}_g$ denote the Deligne-Mumford compactification of $\mg$ 
and let
$\mgc$ denote the moduli stack of stable compact type curves of genus $g$. 
Recall that $\mgc$ can be constructed as the open substack of $\overline{\mathscr M}_g$
which, loosely speaking, parameterizes curves whose dual graph of components is a tree.
The geometric points of $\overline{\mathscr M}_g$ lie in $\mgc$ precisely when the Jacobian of the corresponding curve is an abelian variety, as follows from
\cite[\S 9.2, Example 8]{BoschLR:Neron}.
The following lemma is well-known:

\begin{lemma}
	\label{lemma:open-and-proper}
	The Torelli map $\phi_g: \hg \ra \ag$ factors as the composition of an immersion $\hg \ra \mgc$ and a proper map $\tau_g^c: \mgc \ra \ag$.	
\end{lemma}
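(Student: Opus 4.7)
The plan is to decompose the assertion into two parts: that $\hg \to \mgc$ is an immersion, and that there exists a proper ``compactified Torelli map'' $\tau_g^c: \mgc \to \ag$ through which $\phi_g$ factors. For the immersion, I would invoke \autoref{lemma:closed-immersion-hg} to obtain a closed immersion $\hg \hookrightarrow \mg$, and observe that $\mg$ is an open substack of $\mgc$ (smoothness is an open condition in flat families of stable curves, and smooth curves are trivially of compact type). The composite $\hg \hookrightarrow \mg \hookrightarrow \mgc$ is thus the composition of a closed immersion and an open immersion, hence an immersion.

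To construct $\tau_g^c$, I would send a compact type stable curve $C \to B$ to its relative Jacobian $\pic^0_{C/B}$ equipped with its canonical theta polarization. Using the characterization recalled in the excerpt---that a stable curve is of compact type precisely when its Jacobian is an abelian variety, via \cite[\S 9.2, Example 8]{BoschLR:Neron}---together with the fact that the theta divisor yields a principal polarization in this case, one obtains a morphism of stacks $\tau_g^c: \mgc \to \ag$ which manifestly restricts to $\tau_g$ on $\mg$. In particular, the composition $\hg \hookrightarrow \mgc \xra{\tau_g^c} \ag$ recovers $\phi_g$.

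The main obstacle is properness of $\tau_g^c$, which I would verify using the valuative criterion. Given a discrete valuation ring $R$ with fraction field $K$ and residue field $k$, a morphism $\spec R \to \ag$, and a lift of its restriction $\spec K \to \ag$ to a map $\spec K \to \mgc$, the goal is to produce a unique extension $\spec R \to \mgc$ compatible with the given data (after possibly a ramified extension of $R$). Properness of the Deligne--Mumford compactification $\overline{\mathscr M}_g$ first yields, after such an extension, a stable curve $\scc \to \spec R$ whose generic fiber recovers the given one. Let $\sca \to \spec R$ denote the abelian scheme arising from the map $\spec R \to \ag$, so that $\sca_K \simeq \pic^0_{\scc_K/K}$. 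Since an abelian scheme is the N\'eron model of its generic fiber, $\sca$ is the N\'eron model of $\pic^0_{\scc_K/K}$; by Raynaud's theorem this N\'eron model has identity component $\pic^0_{\scc/R}$, and because $\sca$ is already connected, we conclude $\pic^0_{\scc/R} = \sca$. Thus $\pic^0_{\scc/R}$ is an abelian scheme, its special fiber is an abelian variety, and therefore $\scc_k$ is of compact type by another application of \cite[\S 9.2, Example 8]{BoschLR:Neron}. This shows the extension $\spec R \to \overline{\mathscr M}_g$ already lands in $\mgc$; uniqueness of the extension and the remaining hypotheses of the valuative criterion (separatedness and finite type) are inherited from $\mgc$ and $\overline{\mathscr M}_g$.
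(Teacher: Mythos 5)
Your proof is correct but takes a genuinely different route from the paper's official argument. The paper invokes Alexeev's result \cite[Corollary 5.4]{alexeev:compactified-jacobians-and-the-torelli-map}, which extends the Torelli map to a morphism $\overline{\tau}_g: \overline{\mathscr M}_g \to \overline{\mathscr A}_g$ between proper compactifications; since both source and target are proper, $\overline{\tau}_g$ is proper, and $\tau_g^c: \mgc = \overline{\mathscr M}_g \times_{\overline{\mathscr A}_g} \ag \to \ag$ is its base change, hence also proper. Your argument instead verifies the valuative criterion directly: extend the family of stable curves using properness of $\overline{\mathscr M}_g$, observe that an abelian scheme over a trait is the N\'eron model of its generic fiber, and apply Raynaud's theorem on $\pic^0$ to conclude that the special fiber of the extended curve is of compact type. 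This is essentially the ``more direct proof'' the paper alludes to in the remark immediately following the lemma (there citing \cite[\S 7.4, Proposition 3]{BoschLR:Neron}) but declines to spell out for brevity. The paper's route is shorter but leans on a substantial external theorem; yours is more self-contained at the cost of working through the N\'eron-model machinery. One detail you should make explicit if writing this up in full: having identified $\sca \simeq \pic^0_{\scc/R}$ as abelian schemes via uniqueness of N\'eron models, you must also check that the principal polarizations agree, so that the resulting arrow $\spec R \to \mgc$ genuinely $2$-commutes with the given $\spec R \to \ag$ rather than merely agreeing after forgetting polarizations. This does hold --- a polarization of an abelian scheme over a trait is determined by its restriction to the generic fiber, and the relative theta polarization on $\pic^0_{\scc/R}$ restricts to the given one on $\sca_K$ --- but it is part of what the valuative criterion asks for and deserves a sentence.
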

\begin{proof}
	It is shown in \cite[Corollary 5.4]{alexeev:compactified-jacobians-and-the-torelli-map} that there is a compactification of
	$\ag$, which we denote $\overline{\mathscr A}_g$, and a map $\overline \tau_g : \overline{\mathscr M}_g \ra \overline{\mathscr A}_g$ extending $\tau_g$.
	(In \cite[Corollary 5.4]{alexeev:compactified-jacobians-and-the-torelli-map}, it is only stated that this yields a map of coarse spaces, but the map is
	in fact
	constructed as a map of stacks.)
Since $\overline{\mathscr M}_g$ and $\overline{\mathscr A}_g$ are proper, $\overline{\tau}_g$ is as well, and therefore the resulting map 
$\mgc = \overline{\mathscr M}_g \times_{\overline{\mathscr A}_g} \ag \ra \ag$ is also proper.
Finally, $\hg \ra \mgc$ is an immersion because it is a composition of the immersions $\hg \ra \mg \ra \mgc$.
\end{proof}
\begin{remark}
	\label{remark:}
	We used the rather difficult result of \cite[Corollary 5.4]{alexeev:compactified-jacobians-and-the-torelli-map} in the proof of \autoref{lemma:open-and-proper},
	but \autoref{lemma:open-and-proper} can also be verified directly. 
	One can extend the principal polarization on the universal Jacobian over $\mg$ to that over $\mgc$, and then use the valuative criterion of properness and
	\cite[\S7.4, Proposition 3]{BoschLR:Neron}
	to verify properness of $\mgc \ra \ag$.
	See 
	\cite{mathoverflow:does-the-compatified-torelli-map-extend}
	for further discussion of this.
	For the sake of brevity, we omit this more direct proof.
\end{remark}

We now verify $\phi_g$ satisfies the valuative criterion for radimmersions.
\begin{proposition}
	\label{proposition:valuative}
	The map $\phi_g: \hg \ra \ag$ satisfies the valuative criterion for radimmersions for traits as in \autoref{definition:valuative}.
\end{proposition}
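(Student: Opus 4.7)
The plan is to leverage the factorization $\phi_g = \tau_g^c \circ \iota_g^c$ from \autoref{lemma:open-and-proper}, in which $\iota_g^c : \hg \to \mgc$ is an immersion and $\tau_g^c : \mgc \to \ag$ is proper. Given a trait $\spec R$ with fraction field $K$ and residue field $\kappa$ together with a map $\spec R \to \ag$ whose restrictions to $\spec K$ and to $\spec \kappa$ are each equipped with a factorization through $\phi_g$, the first step is to apply the valuative criterion of properness to $\tau_g^c$: this extends the composition $\spec K \to \hg \xra{\iota_g^c} \mgc$ to a map $\tilde\gamma : \spec R \to \mgc$ whose composition with $\tau_g^c$ is $2$-isomorphic to the given map $\spec R \to \ag$.

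The heart of the argument is then to show that $\tilde\gamma$ factors through $\iota_g^c$. For the generic point this holds by construction. For the closed point, the image $\tau_g^c(\tilde\gamma(\spec \kappa)) \in \ag$ is $2$-isomorphic to the polarized Jacobian of a smooth hyperelliptic curve $C_0$, namely the one coming from the hypothesized factorization $\spec \kappa \to \hg$. This is an indecomposable PPAV: its Theta divisor is the image of $\sym^{g-1} C_0 \to \pic^{g-1}(C_0)$ and hence irreducible. By contrast, if $\tilde\gamma(\spec \kappa)$ corresponded to a compact-type curve with two or more components, its polarized Jacobian would split as a polarized product of the Jacobians of its components (as in \cite[\S 9.2, Example 8]{BoschLR:Neron}), giving a reducible Theta divisor and contradicting the above. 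Thus the closed point of $\tilde\gamma$ lies in $\mg$, and the classical Torelli theorem together with the hypothesis identifies this smooth curve with $C_0$, so with a point of $\hg$.

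With both the generic and special points of $\tilde\gamma$ landing in $\hg \subset \mgc$, the final step is to descend $\tilde\gamma$ to a map $\spec R \to \hg$. Since $\mg \subset \mgc$ is open and both points of $\spec R$ now map into $\mg$, the entire map $\tilde\gamma$ factors through $\mg$. Since $\hg \hookrightarrow \mg$ is a closed immersion by \autoref{lemma:closed-immersion-hg}, the pullback of this closed immersion along $\spec R \to \mg$ is a closed subscheme of $\spec R$ containing both of its points, and hence must equal $\spec R$ because $R$ is an integral domain (the scheme-theoretic image of $\spec K \to \spec R$ is $\spec R$ itself). This yields the desired factorization $\spec R \to \hg$.

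The main obstacle in carrying out this outline rigorously is the $2$-categorical bookkeeping in the closed-point comparison: one must identify the map $\tilde\gamma|_{\spec \kappa}$ coming from the properness lift with the map $\spec \kappa \to \hg \xra{\iota_g^c} \mgc$ coming from the hypothesis, and track the accompanying $2$-isomorphisms and isotropy data. The geometric content is clean---Theta-divisor irreducibility pins the preimage down to $\mg$ and Torelli pins it down inside $\hg$---so once the correct formulation of the valuative criterion for radimmersions in \autoref{definition:valuative} is in hand, what remains is essentially routine stack-theoretic bookkeeping.
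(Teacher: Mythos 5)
Your approach matches the paper's: factor $\phi_g$ through $\mgc$, apply the valuative criterion of properness to $\tau_g^c : \mgc \to \ag$, argue via theta-divisor irreducibility and the classical Torelli theorem that the closed point of the resulting lift lands in $\hg \subset \mgc$, and then conclude. Two remarks. First, your invocation of the valuative criterion of properness to obtain $\tilde\gamma : \spec R \to \mgc$ directly over $\spec R$ is slightly too strong as stated: for Deligne--Mumford stacks the criterion only produces the lift after a dominant base change $T' \to T$ of traits, and in fact \autoref{definition:valuative} explicitly builds in such a $T'$ precisely to accommodate this; the paper's proof carries the $T'$ through the argument. This is a fixable imprecision rather than a wrong idea, but the proof must be phrased over $T'$ from the outset, including when invoking integrality at the end. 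Second, where the paper cites \autoref{corollary:valuative} (applied to the immersion $\hg \to \mgc$) to lift $T' \to \mgc$ to $T' \to \hg$, you instead argue directly: both points of the trait land in the open substack $\mg$, so the map factors through $\mg$, and then the pullback of the closed immersion $\hg \hookrightarrow \mg$ along the trait contains the generic point and hence is all of the trait. That is a correct and arguably more elementary way to handle this step, unwinding the content of \autoref{corollary:valuative} in this special case rather than citing it.
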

\begin{proof}
	Let $T$ be a trait (the spectrum of a discrete valuation ring) with $2$-commutative diagrams as in \eqref{equation:closed-and-generic-point} for $X = \hg$ and $Y = \ag$.
	By \autoref{lemma:open-and-proper}
	and the valuative criterion for properness 
	\cite[Theorem 11.5.1]{olsson2016algebraic}
	there is a dominant map of traits $T' \ra T$
	so that the resulting map $T' \ra \ag$ factors through $\mgc$. 
	Let $\scj \ra T'$ denote the family of principally polarized abelian varieties of dimension $g$ corresponding to the map $T' \ra \ag$.
	The factorization $T' \ra \mgc$ yields a family of stable curves
	$\scc \ra T'$, so that the principally polarized Jacobian of the generic fiber of $\scc$ agrees with the generic fiber of $\scj \ra T'$.

	With notation for $T, T', t, t', \eta, \eta'$ as in \autoref{definition:valuative},
	we have diagrams
	\begin{equation}
		\label{equation:hg-to-ag}
		\begin{tikzcd} 
			\eta' \ar{r} \ar{dd} & \eta \ar {r} \ar {dd} & \hg \ar {d}  & t' \ar{r} \ar{dd} & t \ar {r} \ar {dd} & \hg \ar {d} \\
			&& \mgc  \ar{d}{\tau_g^c}  &  && \mgc\ar{d}{\tau_g^c} \\
			T' \ar{urr} \ar {r} & T \ar{r} & \ag &  T' \ar{urr}\ar {r} & T \ar{r} & \ag
	\end{tikzcd}\end{equation}
	The first diagram in \eqref{equation:hg-to-ag} $2$-commutes by the valuative criterion for properness.	
	We claim the second diagram also $2$-commutes. 
	Granting this claim, observe that $\hg \ra \mgc$ is an immersion, being the composition of a closed immersion $\hg \ra \mg$ and an open immersion $\mg \ra \mgc$.
	Therefore, the valuative criterion for immersions \autoref{corollary:valuative} implies $T' \ra \mgc$ lifts to a map $T' \ra \hg$ making the diagrams
	\eqref{equation:extension} $2$-commute for $X = \hg$ and $Y = \ag$.

	So, to conclude, we just need to check the second diagram in \eqref{equation:hg-to-ag} $2$-commutes.
	This will necessarily follow if we verify the fiber product $[C] \times_{\phi_g, \ag, \tau_g^c} \mgc$ contains a unique geometric point for any geometric point $[C] \in \hg$.
	First, we show $[C] \times_{\phi_g, \ag, \tau_g^c} \mgc$ does not contain any geometric points mapping to $\mgc - \mg$.
	Indeed, 
	the 
	theta divisor associated to a singular compact type
	curve is always geometrically reducible, while that associated to a curve in $\mg$ is geometrically irreducible.
	Hence, no geometric points of $\mgc - \mg$ can map under $\tau_g^c$ to $\phi_g([C])$.
	Therefore, it suffices to show $[C] \times_{\phi_g, \ag, \tau_g^c} \mgc$ contains a unique geometric point in $\mg$, which 
	follows from the classical Torelli theorem 
	\cite[Chapter VII, Theorem 12.1]{CornellS:Arithmetic}.
	\end{proof}

\begin{corollary}
	\label{corollary:phi-radimmersion}
	The map $\phi_g: \hg \ra \ag$ is a radimmersion (as defined in \autoref{definition:radimmersion}).
\end{corollary}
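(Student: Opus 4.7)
The plan is to deduce this corollary directly from the valuative criterion for radimmersions that will be proven in \autoref{section:valuative}, combined with \autoref{proposition:valuative}. That criterion should say, roughly, that for a finite type morphism of locally Noetherian algebraic stacks which is injective on isomorphism classes of geometric points, satisfying the valuative criterion stated in \autoref{definition:valuative} is sufficient for being a radimmersion.

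First, I would verify the background hypotheses needed to apply this criterion. The morphism $\phi_g$ is of finite type because both $\hg$ and $\ag$ are of finite type over $\spec \bz$; one can see this for $\hg$ from \autoref{proposition:smooth-hg-cover} and for $\ag$ by standard facts. Both stacks are Deligne--Mumford (or at least locally Noetherian with quasi-compact quasi-separated diagonals), so the formal requirements for Noetherian approximation are available. Next, I would verify that $\phi_g$ is injective on isomorphism classes of geometric points: this follows from the classical Torelli theorem (\cite[Chapter VII, Theorem 12.1]{CornellS:Arithmetic}), since two hyperelliptic curves with isomorphic principally polarized Jacobians are themselves isomorphic.

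With these ingredients in place, the main input is \autoref{proposition:valuative}, which asserts precisely that $\phi_g$ satisfies the valuative criterion for traits. Combining this with the valuative criterion for radimmersions from \autoref{section:valuative} yields the conclusion that $\phi_g$ is a radimmersion.

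I do not expect any serious obstacle here: all of the actual work has already been done. The deformation-theoretic computations of \autoref{section:general-setup}, \autoref{section:char-not-2}, and \autoref{section:char-2} were needed to analyze tangent spaces (and thus immersion behavior, used in the eventual refinement to \autoref{theorem:hyperelliptic}), while the factorization through $\mgc$ from \autoref{lemma:open-and-proper} and the classical Torelli theorem were precisely what powered \autoref{proposition:valuative}. The present corollary is essentially a bookkeeping step, matching the hypotheses of the stack-theoretic valuative criterion to the geometric data already extracted for $\phi_g$.
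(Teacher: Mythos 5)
There is a genuine gap in your proposal: you have mis-stated the hypotheses of the valuative criterion, and as a result you do not verify the conditions that the proof actually hinges on. \autoref{theorem:valuative} does not say that ``injective on isomorphism classes of geometric points plus the valuative criterion implies radimmersion.'' It requires $f$ to be representable and to induce a \emph{universal homeomorphism on isotropy groups at every geometric point of} $X$, in addition to the valuative criterion for traits. This distinction is not cosmetic: \autoref{example:non-radicial-example} in the paper exhibits $\spec k \ra [\spec k/(\bz/2\bz)]$, a map which satisfies the valuative criterion for traits and is bijective on geometric points, yet is not a radimmersion precisely because it fails the isotropy-group condition. Injectivity on geometric points, which you appeal to, is never used as a hypothesis in \autoref{theorem:valuative} at all --- it was already consumed inside \autoref{proposition:valuative}.

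The actual proof therefore has to verify the isotropy-group hypothesis and representability. The argument is: by the Torelli theorem \cite[Chapter VII, Theorem 12.1(b)]{CornellS:Arithmetic}, $\phi_g$ is bijective on geometric points of isotropy groups at each geometric point of $\hg$ (this is the part of Torelli about automorphism groups, not about injectivity on isomorphism classes); since $\hg$ and $\ag$ are Deligne--Mumford, their isotropy groups at geometric points are constant group schemes, so this bijection is in fact an isomorphism of group schemes, hence a universal homeomorphism. Representability of $\phi_g$ is then deduced from this isotropy-group isomorphism via \cite[Theorem 2.2.5(1)]{Conrad:arithmeticModuliOf}, noting that the needed diagonal hypotheses hold because $\mg$ and $\ag$ have finite diagonals. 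Your proposal would need to be rewritten around these two verifications; the Torelli-theorem input you cite is the right result, but the relevant part is statement (b) about automorphisms, not the injectivity statement (a).
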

\begin{proof}
	This will follow from \autoref{proposition:valuative} and \autoref{theorem:valuative}
	once we verify $\phi_g$ is representable and induces a universal homeomorphism on isotropy groups at every point of $\hg$.
	We first check that $\phi_g$ induces isomorphisms on isotropy groups at every point of $\hg$.
	Observe that $\phi_g$ induces a bijection on geometric points of isotropy groups at every point of $\hg$ by the Torelli theorem \cite[Chapter VII, Theorem 12.1(b)]{CornellS:Arithmetic}.
	Since both $\hg$ and $\ag$ are Deligne-Mumford stacks, their isotropy groups at any geometric point are constant group schemes.
	Therefore, $\phi_g$ induces an isomorphism on isotropy groups at geometric points of $\hg$.
	As a consequence, $\phi_g$ is representable, as follows by applying \cite[Theorem 2.2.5(1)]{Conrad:arithmeticModuliOf} 
	to the pullback of
	$\phi_g$ along a schematic cover of $\ag$.
	Note that \cite[Theorem 2.2.5]{Conrad:arithmeticModuliOf} assumes stacks have finite type separated diagonals, but these assumptions apply in 
	this case as $\mg$ and $\ag$ even have finite diagonals.
\end{proof}

Combining \autoref{corollary:phi-radimmersion} with \autoref{lemma:dual-tangent-space-description}, \autoref{proposition:char-0-tangent-space}, and \autoref{proposition:char-2-tangent-space},
we now prove \autoref{theorem:hyperelliptic}.

\begin{proof}[Proof of \autoref{theorem:hyperelliptic}]
We know $\phi_g$ is a radimmersion by \autoref{corollary:phi-radimmersion}.
Next, we show $\phi_g$ is an immersion over $\spec \bz[1/2]$ or when $g =2$. 
Using \autoref{theorem:valuative} and \autoref{corollary:valuative},
we only need to check $\phi_g$ is a monomorphism over $\spec \bz[1/2]$ or when $g = 2$.
Equivalently, we just need to verify $\phi_g$ is injective on geometric points and tangent vectors \cite[17.2.6]{EGAIV.4}.
It follows from the classical Torelli theorem 
\cite[Chapter VII, Theorem 12.1(a)]{CornellS:Arithmetic}
that the map $\phi_g: \hg \ra \ag$ is injective
on geometric points.
Since smooth hyperelliptic curves $C$ in over a field $k$ of any characteristic can be deformed to smooth hyperelliptic curves of characteristic $0,$ (as can be seen by using explicit equations,) we obtain
an identification of
the kernel of $T_{[C]} \phi_g: T_{[C]}\hg \ra T_{[C]} \ag$ with $\ker \left(T_{[C]} \phi_{g,k} \right)$.
Note that $T_{[C]} \phi_{g,k}$ is given as the composition \eqref{equation:tangent-space-composition}.
The vanishing of $\ker T_{[C]} \phi_{g,k}$ 
over $\spec \bz[1/2]$ follows 
from combining \autoref{lemma:dual-tangent-space-description} with \autoref{proposition:char-0-tangent-space}.
Therefore, $\phi_g$ is injective on tangent vectors over $\spec\bz[1/2]$.
In the case $g = 2$ we find $\phi_g$ is a monomorphism by combining the above with \autoref{proposition:char-2-tangent-space}.

To conclude the proof, we just need to check that the restriction of $\phi_g$ to a field of characteristic $2$
induces a map on tangent spaces with kernel of dimension $g -2$.
Again using the identification $\ker T_{[C]} \phi_g =  \ker T_{[C]} \phi_{g,k}$ mentioned above, 
this follows from combining \autoref{lemma:dual-tangent-space-description} with \autoref{proposition:char-2-tangent-space}.
\end{proof}

\appendix
\section{The valuative criterion for locally closed immersions}
\label{section:valuative}

\subsection{Statement of the criterion}
In this section we state a valuative criterion for immersions of algebraic stacks in \autoref{corollary:valuative}.
The result is a generalization of \cite[Chapter 1, Corollary 2.13]{mochizuki2014foundations} used in proving \autoref{theorem:hyperelliptic}. 
Many of the ideas are present in \cite[Chapter 1, \S 2.4]{mochizuki2014foundations}, though nontrivial care
has to be taken to deal with algebraic stacks in place of schemes.

We begin by introducing definitions to state the valuative criterion for radimmersions, which will imply the analogous valuative criterion for immersions.
Recall that a morphism of schemes $X \ra Y$ is {\em radicial} if for every field $K$, $X(\spec K) \ra Y(\spec K)$ is injective, or equivalently each geometric fiber has at most one geometric point.
A morphism $f: X \ra Y$ of algebraic stacks is an immersion (or locally closed immersion) if it factors as a composition
$X \ra U \ra Y$ where $U \ra Y$ is an open immersion and $X \ra U$ is a closed immersion.
\begin{definition}
	\label{definition:radimmersion}
	A morphism of algebraic stacks $X \ra Y$ is a {\em radimmersion} if it factors as a composition
	$X \ra U \ra Y$ where $U$ is an algebraic stack with $U \ra Y$ an open immersion and
	$X \ra U$ a finite radicial map.
\end{definition}
We note in particular that radimmersions are representable by schemes.
\begin{remark}
	\label{remark:}
	In the context of maps of algebraic stacks, being radicial is not equivalent to being injective on geometric points as it is for maps of schemes.
	For example, for $k$ a field, $\spec k \ra \left[\spec k / \left( \bz/2\bz \right)  \right]$ is bijective on geometric points but is not radicial
because after pulling back to a schematic cover of the target, the resulting map is not radicial.
This distinction will play a significant role in what follows.
\end{remark}

For the next definition recall that a {\em trait} is a scheme of the form 
$\spec R$ for $R$ a discrete valuation ring.
\begin{definition}
	\label{definition:valuative}
Let $f: X \ra Y$ be a map of algebraic stacks.
We say {\em $f$ satisfies the valuative criterion for radimmersions} if
the following property holds:
Let $T$ be the spectrum of a discrete valuation ring
with generic point $\eta$ and closed point $t$,
and let $g: T \ra Y$ be any map.
Suppose we have $2$-commutative diagrams
	\begin{equation}
		\label{equation:closed-and-generic-point}
		\begin{tikzcd} 
			\eta \ar {r}{j_\eta} \ar {d}{i_\eta} & X \ar {d}{f} & t \ar{r}{j_t} \ar{d}{i_t} & X \ar{d}{f} \\
			T \ar {r}{g} & Y & T \ar{r}{g} & Y,
		\end{tikzcd}\end{equation}
with $2$-morphisms $\gamma_\eta: f\circ j_\eta \simeq g \circ i_\eta$ and $\gamma_t: f \circ j_t \simeq g \circ i_t$
witnessing $2$-commutativity of the diagrams.
Then, there exists a spectrum of a discrete valuation ring $T'$ with closed point $t'$ and generic point $\eta'$
with a specified dominant map $T' \ra T$ 
such that
there is a unique morphism $h:T' \ra X$ making the diagrams	
	\begin{equation}
		\label{equation:extension}
		\begin{tikzcd} 
		\eta' \ar{r} \ar{d} & \eta \ar {r} \ar {d} & X \ar {d}{f} & t' \ar{r} \ar{d}& t \ar{r} \ar{d} & X \ar{d}{f} \\
		T' \ar {r}\ar[crossing over]{urr}{h} & T \ar {r}{g} & Y & T' \ar{r}\ar[crossing over]{urr}{h} & T \ar{r}{g} & Y
		\end{tikzcd}\end{equation}
$2$-commute compatibly with the above choices of $\gamma_\eta$ and $\gamma_t$ (as for dotted arrows in \cite[\href{https://stacks.math.columbia.edu/tag/0CLA}{Tag 0CLA}]{stacks-project}).

We say {\em $f$ satisfies the valuative criterion for radimmersions with $T = T'$} if
for every spectrum of a discrete valuation ring $T$
and diagrams \eqref{equation:closed-and-generic-point},
there exists a map $h: T \ra X$ such that \eqref{equation:extension} holds with $T' = T$ and the map $T' \ra T$ being the identity map.

We say {\em $f$ satisfies the valuative criterion for radimmersions for traits} if $f$ satisfies the
valuative criterion for radimmersions for all traits $T$ and $T'$.

We say {\em $f$ satisfies the valuative criterion for radimmersions for traits with $T = T'$} if $f$ satisfies the
valuative criterion for radimmersions with $T = T'$ for all traits $T$.
\end{definition}

We can now state the valuative criterion for radimmersions.
We note that in the case that $X$ and $Y$ are finite type schemes over a noetherian base $S$,
the first two conditions of \autoref{theorem:valuative}
were shown to be equivalent in 
\cite[Chapter 1, Theorem 2.12]{mochizuki2014foundations}.
Recall that for $S$ a scheme and $x: S \ra X$ a point of an algebraic stack, the isotropy group at $x$ is by definition the algebraic space
$\isom_X(x,x)$.

\begin{theorem}[Valuative criterion for radimmersions]
	\label{theorem:valuative}
	Let $f:X \ra Y$ be a representable finite type quasi-separated morphism
	of algebraic stacks with $Y$ locally noetherian.
	Then, the following are equivalent:
	\begin{enumerate}
		\item $f$ is a radimmersion
		\item $f$ induces a universal homeomorphism on isotropy groups at every geometric point of $X$, and $f$ satisfies the valuative criterion for radimmersions for traits with $T=T'$
		\item $f$ induces a universal homeomorphism on isotropy groups at every geometric point of $X$, and $f$ satisfies the valuative criterion for radimmersions for traits. 
	\end{enumerate}
\end{theorem}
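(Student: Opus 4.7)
The plan is to prove the cycle $(1) \Rightarrow (2) \Rightarrow (3) \Rightarrow (1)$, noting that $(2) \Rightarrow (3)$ is immediate because condition $(3)$ only asks for a lift along some dominant trait extension $T' \to T$ while $(2)$ insists on $T' = T$. The substantive content therefore lies in $(1) \Rightarrow (2)$ and $(3) \Rightarrow (1)$.

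For $(1) \Rightarrow (2)$, write $f = j \circ i$ with $j : U \hookrightarrow Y$ an open immersion and $i : X \to U$ finite radicial. Open immersions are \'etale, hence induce isomorphisms on isotropy groups; and a representable finite radicial morphism induces, at every geometric point of $X$, a finite universally injective map of automorphism group schemes, which is a universal homeomorphism. To verify the valuative criterion with $T = T'$: given a trait $T \to Y$ whose generic point $\eta$ and closed point $t$ both factor through $X$, their images land in $U$, so the preimage of $U$ in $T$ is open and contains $\eta$ and $t$, hence equals $T$. Thus $T \to Y$ factors uniquely through $U$, and it remains to lift along $i$. Base-changing, $T \times_U X \to T$ is finite radicial with sections at $\eta$ and $t$; the scheme-theoretic closure of the $\eta$-section is an integral closed subscheme mapping finitely and birationally to the normal scheme $T$, so by normality of $T$ this closure maps isomorphically to $T$, producing the desired section of $i$. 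Uniqueness is automatic since $i$ is radicial.

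For $(3) \Rightarrow (1)$, the first move is a reduction to schemes. Pick a smooth scheme cover $Y_0 \to Y$ and set $X_0 := X \times_Y Y_0$; since $f$ is representable, $f_0 : X_0 \to Y_0$ is a morphism of locally noetherian schemes. Being a radimmersion is smooth-local on the target (as are the open immersion and finite radicial conditions individually), so it suffices to show $f_0$ is a radimmersion. The valuative criterion for $f$ transports to $f_0$, using the isotropy hypothesis to convert $2$-commuting trait diagrams on $Y$ into strictly commuting ones on $Y_0$, possibly after a further dominant extension of traits. In the scheme setting, the valuative criterion forces $f_0$ to be quasi-finite: a positive-dimensional fiber would admit a trait in it exhibiting two distinct lifts contradicting uniqueness. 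Zariski's main theorem then yields a factorization $X_0 \xra{j_0} \ol{X_0} \xra{\bar f_0} Y_0$ with $j_0$ an open immersion and $\bar f_0$ finite. Using the uniqueness in the valuative criterion applied to traits through two hypothetical geometric points of $\ol{X_0}$ with a common image, one shows $\bar f_0$ is radicial. Since $\bar f_0$ is finite, the set $Z := \bar f_0(\ol{X_0} \setminus X_0)$ is closed in $Y_0$, and $U_0 := Y_0 \setminus Z$ satisfies $X_0 = \bar f_0^{-1}(U_0)$, with $X_0 \to U_0$ finite radicial, producing the desired factorization.

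The main obstacle I expect is the reduction from stacks to schemes in $(3) \Rightarrow (1)$: the valuative criterion is phrased via $2$-commuting diagrams with specified $2$-isomorphisms $\gamma_\eta, \gamma_t$, and transporting it from $f$ to the schematic pullback $f_0$ requires carefully handling these $2$-morphisms. This is precisely where the universal homeomorphism condition on isotropy groups is exploited: it kills the automorphism ambiguity obstructing strict commutativity upstairs, at the cost of possibly replacing $T$ by a dominant extension $T'$. A secondary technical point is showing that $\bar f_0$ in the Zariski main theorem factorization is actually radicial rather than only quasi-finite, which requires a careful trait approximation argument and, to handle purely inseparable residue field extensions, passage to complete local rings at closed points.
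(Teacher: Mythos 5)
Your cycle $(1)\Rightarrow(2)\Rightarrow(3)\Rightarrow(1)$ matches the paper's, and your $(1)\Rightarrow(2)$ is essentially the paper's argument (the paper invokes the valuative criterion for properness where you take the closure of the generic section; these come to the same thing). The genuine problem is the pivotal step of $(3)\Rightarrow(1)$: the claim that $\bar f_0$ is radicial is false, and the argument offered for it does not apply. For a counterexample, take $Y_0$ an affine nodal cubic, $\ol{X_0}=\ba^1$ its normalization, and $X_0\subset\ba^1$ the complement of the two preimages of the node; then $f_0:X_0\to Y_0$ is an open immersion onto the smooth locus, so $f_0$ satisfies every version of the valuative criterion, yet $\bar f_0$ is the normalization map, with a two-point fiber over the node. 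The underlying issue is that the criterion for $f_0$ only constrains maps from traits \emph{into $X_0$}, not into the compactification $\ol{X_0}$, so one cannot run ``traits through two geometric points of $\ol{X_0}$.'' What you actually need, and what \emph{is} true (when $\ol{X_0}$ is taken to be the scheme-theoretic closure of $X_0$), is the weaker statement that no point of $X_0$ shares a $\bar f_0$-image with a point of $\ol{X_0}\setminus X_0$, i.e. $X_0=\bar f_0^{-1}(U_0)$. This requires a real argument: given $p\in X_0$, $q\in\ol{X_0}\setminus X_0$ over the same $y$, run a trait $T$ in $\ol{X_0}$ from a generic point $\xi\in X_0$ down to $q$, observe that its closed point also lifts through $p$, apply the criterion to produce $h:T'\to X_0$ agreeing with $\xi$ on $\eta'$, and then compare $h$ with $T'\to T\to\ol{X_0}$: two $Y_0$-maps into the separated scheme $\ol{X_0}$ agreeing on the dense generic point must coincide, forcing $p=q$, a contradiction. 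The paper sidesteps this entirely by passing to the strict henselization of $Y$ at each point, where the finite scheme from Zariski's main theorem splits as a disjoint union of local schemes, one shows the open piece over the closed point is a whole component, and then spreads out to an \'etale neighborhood; your global construction of $U_0$ can be made to work, but the specific justification you gave does not.

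There is a second, smaller gap: you assert that $X_0:=X\times_Y Y_0$ is a scheme ``since $f$ is representable,'' but representability only makes $X_0$ an algebraic space; one needs (as in the paper's schematicity lemma) to first run Zariski's main theorem in the algebraic-space setting and deduce quasi-affineness before one can speak of $\ol{X_0}$ as a scheme. Relatedly, the role you attribute to the universal-homeomorphism-on-isotropy hypothesis---``converting $2$-commuting trait diagrams into strictly commuting ones''---is misplaced: once one has pulled back along a scheme cover of $Y$, the $2$-morphisms are trivial. The hypothesis is instead what guarantees (via the paper's radiciality lemma) that geometric fibers are singletons and the lift is unique, which is what makes the separatedness and uniqueness arguments work.
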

The proof is given at the end of this section.
Before giving the proof, we deduce the following valuative criterion for locally closed immersions, which 
generalizes \cite[Corollary 2.13]{mochizuki2014foundations}.

\begin{corollary}[Valuative criterion for locally closed immersions]
	\label{corollary:valuative}
	Let $f:X \ra Y$ be a finite type quasi-separated 
	monomorphism
	of algebraic stacks with $Y$ locally noetherian.
	Then, the following are equivalent:
	\begin{enumerate}
		\item $f$ is an immersion 
		\item $f$ satisfies the valuative criterion for radimmersions for traits with $T=T'$
		\item $f$ satisfies the valuative criterion for radimmersions for traits. 
	\end{enumerate}
\end{corollary}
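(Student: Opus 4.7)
The plan is to deduce this corollary from \autoref{theorem:valuative}. The differences between the two statements are that here $f$ is assumed to be a monomorphism (rather than merely representable) and is concluded to be an immersion (rather than merely a radimmersion); correspondingly, the hypothesis on isotropy groups appearing in \autoref{theorem:valuative} has been dropped. The reduction therefore rests on two observations: (i) any monomorphism is automatically representable and induces isomorphisms on isotropy groups, so satisfies the isotropy hypothesis of \autoref{theorem:valuative}; (ii) for a monomorphism, being a radimmersion is equivalent to being an immersion.

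For (i), since $f: X \ra Y$ is a monomorphism, the induced functor $X(T) \ra Y(T)$ is fully faithful for every scheme $T$. In particular, at every geometric point $x$ of $X$ the induced map $\isom_X(x,x) \ra \isom_Y(f(x), f(x))$ is an isomorphism, and a fortiori a universal homeomorphism. A monomorphism is also representable, since its diagonal is an isomorphism. Thus \autoref{theorem:valuative} applies to $f$ and gives the equivalence of conditions $(2)$ and $(3)$ of the corollary with the statement that $f$ is a radimmersion.

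It remains to prove (ii). One direction is immediate: every immersion is tautologically a radimmersion, since a closed immersion is both finite and radicial. Conversely, suppose $f$ is a monomorphism that factors as $X \xra{g} U \xra{j} Y$ with $j$ an open immersion and $g$ finite and radicial. Since $j$ is a monomorphism and $f = j \circ g$ is a monomorphism, $g$ is also a monomorphism. The main step, which I expect to be the principal obstacle in this formal reduction, is to deduce from this that $g$ is a closed immersion, using the classical fact that a finite monomorphism is a closed immersion. Because $g$ is representable by schemes (indeed affine, being finite), this reduces to the corresponding scheme-theoretic statement that a proper monomorphism of schemes is a closed immersion (see \cite[18.12.6]{EGAIV.4}). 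Granting this, $f = j \circ g$ is the composition of a closed immersion with an open immersion, hence an immersion, as desired.
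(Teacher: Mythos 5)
Your proposal is correct and takes essentially the same approach as the paper's proof: both establish the two reductions (monomorphisms are representable and induce isomorphisms on isotropy groups, so Theorem~\ref{theorem:valuative} applies; and a radimmersion that is a monomorphism is an immersion because the finite radicial part of the factorization is itself a monomorphism, hence a closed immersion by \cite[18.12.6]{EGAIV.4}). The paper is slightly terser about the intermediate step that $g$ inherits the monomorphism property from $f$, but the logical content and the key citation are identical.
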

\begin{proof}
	Recall by definition that a map is a monomorphism if it is representable (i.e., representable by algebraic spaces) and is fppf locally a monomorphism.
	Further, 
	by 
	\cite[\href{https://stacks.math.columbia.edu/tag/04ZZ}{Tag 04ZZ}]{stacks-project}
	monomorphisms must induce isomorphisms on isotropy groups at every point,
	and so in particular, the map on isotropy groups a universal homeomorphism.

	Therefore, using \autoref{theorem:valuative} it suffices to show that a monomorphism $f: X \ra Y$
	is an immersion if and only if it is a radimmersion.
	Certainly immersions are monomorphisms and radimmersions.
	So, we just need to check a 
	radimmersion which is a monomorphism is an immersion.
	Both immersions and radimmersions are representable by schemes by definition, 
	and so it suffices to check a radimmersion monomorphism of schemes
	is an immersion.
	Further, by factoring $f : X \ra Y$ as a composition of a finite morphism and an open
	immersion, it suffices to check a finite monomorphism is a closed immersion.
	This is shown in \cite[18.12.6]{EGAIV.4}.
\end{proof}

\subsection{Remarks and Examples}
We next make some comments on the valuative criterion for radimmersions and give some examples and non-examples.
\begin{remark}
	\label{remark:}
	One can similarly state and prove a version of the valuative criterion for radimmersions \autoref{theorem:valuative} and
	the valuative criterion for locally closed immersions \autoref{corollary:valuative},
	where one removes the noetherian hypotheses on $Y$ at the cost of assuming $f$ is finitely presented (instead of just of finite type)
	and working with all valuation rings (instead of just discrete valuation rings). 
	The proof is essentially the same, where one replaces the references to the noetherian valuative criteria for properness and separatedness
	for discrete valuation rings with references to valuative criteria for properness and separatedness for general valuation rings.
\end{remark}

\begin{example}
	\label{example:}
	As we have seen in \autoref{proposition:valuative}, the restricted Torelli map $\phi_g: \hg \ra \ag$ satisfies the valuative criterion for radimmersions.
	Another example of a map of algebraic stacks which can be seen to be an immersion using the valuative criterion
	is the map from the moduli stack of smooth plane curves of degree $d$ for $d \geq 4$ to $\mg$. 
	Here, the moduli
	stack of plane curves can be defined by taking the open in the Hilbert scheme of plane curves corresponding to smooth plane curves, and quotienting
	by the $\pgl_3$ action. 
	See \cite[Remark 5.4]{landesman-swaminathan-tao-xu:rational-families} for some more details.
	We note that we do not know how to see either of these maps are immersions without the valuative criterion.
\end{example}
\begin{example}
	\label{example:}
	Radimmersions of algebraic stacks do not always induce isomorphisms on isotropy groups.
	For example, for $k$ a field of characteristic $p$, the map $\spec k \ra \left[ \spec k/ \mu_p \right]$ is a radimmersion that does not
	induce an isomorphism on isotropy groups.
	More generally, we can replace $\mu_p$ with any group scheme with a single geometric point over $\spec k$ in the above example.
\end{example}
\begin{example}
	\label{example:non-radicial-example}
	We now give an example of a map which satisfies the valuative criterion for radimmersions for traits 
	but which is not a radimmersion.
	For $k$ a field, consider the representable map $f: \spec k \ra \left[ \spec k/ \left( \bz/2\bz \right) \right]$.
This satisfies the valuative criterion for radimmersions because a $\bz/2\bz$ torsor over a trait which is trivial over the generic fiber is necessarily
	trivial, using normality of the trait.
	Nevertheless, $f$ is not a radimmersion because the fiber of $f$ over $\spec k$ has two geometric points.
	In particular, $f$ induces the map $\left\{ \id \right\} \ra \bz/2\bz$ on isotropy groups, and so is not a universal homeomorphism on
	isotropy groups and hence does not satisfy \autoref{theorem:valuative}(3).
	More generally, one can replace $\bz/2\bz$ in the above example with any nontrivial constant group scheme.
\end{example}
\begin{example}
	\label{example:}
	In addition to \autoref{example:non-radicial-example},
	another example of a map which satisfies the valuative criterion for radimmersions for traits
	but which is not a radimmersion
	is the Torelli map $\tau_g: \mg \ra \ag$ when $g \geq 3$.
	This can be verified using the same method as in the proof of \autoref{proposition:valuative}.
	Of course, $\tau_g$ does not induce an isomorphism on isotropy groups because a generic genus $g$ curve for $g \geq 3$ has only the trivial
	automorphism, while all principally polarized abelian varieties have $\times[-1]$ as a nontrivial automorphism.
\end{example}
\begin{example}
	\label{example:}
	An example of a map which is bijective on geometric points but which does not satisfy the valuative criterion for radimmersions for traits is
	$\left[ \spec \br / \left( \bz/2\bz \right) \right] \ra \spec \br$, where $\br$ denotes the real numbers.
	This fails to satisfy the valuative criterion because one can map the generic point of a trait to a trivial $\bz/2\bz$ torsor and the 
	closed point to a nontrivial $\bz/2\bz$ torsor, and there will be no maps from the trait extending these.
	Indeed,
	any $\bz/2\bz$ torsor over over a trait which is generically trivial is trivial.
\end{example}

The above examples raise the following question:
\begin{question}
	\label{question:generalized-valuative}
	Is there a simple characterization of maps of algebraic stacks $f: X \ra Y$ which satisfy the valuative criterion for radimmersions?
\end{question}
Note that \autoref{question:generalized-valuative} is not answered by \autoref{theorem:valuative}
because we do not assume that $f$ is representable and 
a universal homeomorphism
on isotropy groups.

\subsection{Proving the valuative criterion}
Before proving \autoref{theorem:valuative} at the end of this section,
we establish a number of preliminary lemmas.
One of the main obstructions we face, not encountered in the schematic version from \cite[Chapter 1, \S2.4]{mochizuki2014foundations},
is to verify that $f$ is representable by schemes. This is verified using Zariski's main theorem in \autoref{lemma:schematic} after we show $f$
is separated.
We next verify that $f$ satisfying \autoref{theorem:valuative}(3) have geometric fibers with at most one geometric point.
\begin{lemma}
	\label{lemma:radicial}
	Suppose $f: X \ra Y$ is a finite type representable morphism of algebraic stacks, 
	inducing a universal homeomorphism on isotropy groups at each geometric point of $X$ and satisfying the valuative criterion for radimmersions for traits.
	Then each geometric fiber of $f$ has at most $1$ geometric point.
\end{lemma}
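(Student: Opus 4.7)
The plan is to argue by contradiction, using the valuative criterion to transport a putative pair of distinct geometric fiber points into a trait argument that forces them to coincide. Fix a geometric point $\bar y : \spec k \to Y$ with $k$ algebraically closed, and suppose for contradiction that the fiber $X_{\bar y} := X \times_Y \spec k$ has two distinct geometric points. Since $f$ is representable and of finite type, $X_{\bar y}$ is a finite-type algebraic space over $k$; after enlarging $k$ to an algebraically closed extension if necessary and replacing $\bar y$ accordingly, we may assume the two geometric points are $k$-rational morphisms $x_1, x_2 : \spec k \to X_{\bar y}$ with distinct images $p_1 \ne p_2$ in $|X_{\bar y}|$, both closed (closedness coming from finite type over the algebraically closed field $k$).

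We now build the trait witnessing the desired contradiction. Let $R = k[[u]]$, set $T = \spec R$ with generic point $\eta$ and closed point $t$, and take $g : T \to Y$ to be the composition $T \to \spec k \xrightarrow{\bar y} Y$. Define $j_\eta : \eta \to X$ as $\eta \to \spec k \xrightarrow{x_1} X_{\bar y} \to X$ and $j_t : t \to X$ as $\spec k \xrightarrow{x_2} X_{\bar y} \to X$. The canonical $2$-isomorphisms $f \circ x_i \simeq \bar y$ packaged into the definition of the fiber product $X_{\bar y}$ induce $2$-isomorphisms $\gamma_\eta$ and $\gamma_t$ witnessing the $2$-commutativity of the diagrams \eqref{equation:closed-and-generic-point}. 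Applying the valuative criterion for radimmersions for traits, we obtain a dominant map of traits $T' \to T$ with generic point $\eta'$ and closed point $t'$, together with a morphism $h : T' \to X$ making \eqref{equation:extension} $2$-commute compatibly with $\gamma_\eta$ and $\gamma_t$.

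Since $f \circ h$ is $2$-isomorphic to the composition $T' \to T \xrightarrow{g} Y$, which factors through $\bar y : \spec k \to Y$, the universal property of the fiber product provides a lift $h' : T' \to X_{\bar y}$ recovering $h$ after composing with $X_{\bar y} \to X$. By the compatibility with $\gamma_\eta$ and $\gamma_t$ built into the valuative criterion, $h'|_{\eta'}$ lies over $p_1$ and $h'|_{t'}$ lies over $p_2$ in $|X_{\bar y}|$. But the closed point of a trait is always a specialization of its generic point, so by continuity $p_2 \in \overline{\{p_1\}}$; since $p_1$ is closed, this forces $p_2 = p_1$, contradicting $p_1 \ne p_2$.

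The main point of care is ensuring that, after invoking the valuative criterion, the morphism $h'$ into the algebraic space $X_{\bar y}$ genuinely has the claimed restrictions to $\eta'$ and $t'$ on the level of underlying points; this requires careful bookkeeping with the $2$-isomorphisms $\gamma_\eta$ and $\gamma_t$ to produce $h'$ and identify its fiber behavior. Once that compatibility is extracted, the argument reduces to the standard topological fact that a trait cannot map to two distinct closed points of a target algebraic space.
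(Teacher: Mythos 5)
Your proof is structurally close to the paper's but differs in one substantive, load-bearing way: you work throughout with points of the actual fiber $X_{\bar y} = X \times_Y \spec k$, whereas the paper's proof deliberately works with geometric points of $X$ equipped with a $2$-morphism to $y$, shows they are $2$-isomorphic as points of $X$, and then invokes the universal-homeomorphism-on-isotropy-groups hypothesis to descend the identification to $X_{\bar y}$. The descent step is exactly where the paper cites the example $\spec k \to [\spec k/(\bz/2\bz)]$ to show that $2$-isomorphic geometric points of $X$ need not give the same point of $X_{\bar y}$ in general. Your argument never invokes the isotropy hypothesis at all; you instead route the entire conclusion through the claim that the compatibility of the dotted arrow $h$ with both $\gamma_\eta$ and $\gamma_t$ forces $h'|_{\eta'}$ to land at $p_1$ and $h'|_{t'}$ to land at $p_2$ in $|X_{\bar y}|$.

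That claim is precisely the place you should worry. If the compatibility condition is strong enough to pin down $h'|_{t'}$ in $|X_{\bar y}|$ (rather than merely pinning down $h|_{t'}$ in $|X|$ up to $2$-isomorphism), then your proof is correct and would even show the isotropy hypothesis in the lemma statement is superfluous — but the paper's Example (the map $\spec k \to [\spec k/(\bz/2\bz)]$) is stated to satisfy the valuative criterion and yet has a geometric fiber with two points, which would be a counterexample to your stronger conclusion. The paper's proof carefully avoids this issue: it only extracts from the valuative criterion the weaker fact that $h$ sends the generic and special points of $T'$ to the images of $\alpha$ and $\beta$ in $|X|$, concludes $\alpha \simeq \beta$ as maps $\spec k \to X$, and then uses the isotropy hypothesis to transfer that to a statement about $X_{\bar y}$. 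You should either (a) verify carefully, with the exact $2$-morphism bookkeeping you defer, that the compatibility with $\gamma_t$ does what you need in $X_{\bar y}$ — and reconcile this with the example, which would then need revisiting — or (b) restructure the argument along the paper's lines, i.e.\ first conclude a $2$-isomorphism $\alpha \simeq \beta$ of geometric points of $X$, and then use the isotropy hypothesis explicitly to upgrade this to equality of points of $X_{\bar y}$.

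Two minor points. First, your reduction to "$k$-rational $x_1, x_2$ with distinct closed images" is fine, but should be stated as the contrapositive of the lemma rather than via contradiction: one shows that any two geometric points of the fiber coincide. Second, you should be a bit careful that $\eta'$ is not itself a geometric point (its residue field is not algebraically closed); both your argument and the paper's implicitly handle this by working with underlying topological points, which is fine, but it is worth flagging since the lemma is stated for geometric points.
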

\begin{proof}
	Begin with a geometric point $y: \spec k \ra Y$.
	Suppose $\alpha: \spec k \ra X$ and $\beta: \spec k \ra X$ are two geometric points of $X$ with $2$-morphisms $f \circ \alpha \simeq y \simeq f \circ \beta$.
	Because the map induced by $f$ is a universal homeomorphism 
	on isotropy groups, if $\alpha$ and $\beta$ map to $2$-isomorphic points of $X$, they must map to the same point of $X_y := \spec k \times_{y, Y, f} X$.
	(In general, this property may fail when $f$ is not a universal homeomorphism
	on isotropy groups, such as in the case of $f: \spec k \ra \left[ \spec k/ \left( \bz/2\bz \right) \right]$.)
	Therefore, in order to show $X_y$ has at most one geometric point,
	it suffices to exhibit a $2$-morphism $\alpha \simeq \beta$.

	By the finite type hypothesis $\alpha$ and $\beta$ both factor though closed points of $X_y$.
	On the other hand, taking $T = \spec k\left[ \left[ x \right] \right]$
	in \autoref{definition:valuative}, we may choose diagrams \eqref{equation:closed-and-generic-point}
	sending the generic point of $T$ to the image of $\alpha$ via the inclusion $k \hookrightarrow k\left( \left( x \right) \right)$ and the closed point of $T$ to the image of $\beta$.
	Therefore, by the valuative criterion, we obtain a map $T' \ra X$ sending the generic point of $T'$
	to $\alpha$ and the closed point to $\beta$. Hence $\beta$ lies in the closure of $\alpha$.
	Because $\alpha$ and $\beta$ are both closed geometric points, we find $\beta \simeq \alpha$. 
	Therefore, $X$ has at most one geometric point over $y$ and hence $X_y$ has at most one geometric point by the
	preceding paragraph.
\end{proof}

\begin{lemma}
	\label{lemma:separated}
	Suppose $f: X \ra Y$ is a finite type quasi-separated representable 
	morphism of algebraic stacks with $Y$ locally noetherian,
	and suppose $f$ satisfies \autoref{theorem:valuative}(3).
	Then $f$ is separated.
\end{lemma}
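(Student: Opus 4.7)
The plan is to reduce separatedness to showing that the relative diagonal $\Delta_f \colon X \to X \times_Y X$ is a closed immersion. First I would observe that since $f$ is representable and induces a universal homeomorphism, hence in particular an injection, on isotropy groups at every geometric point of $X$, the morphism $f$ is faithful, so $\Delta_f$ is a monomorphism. Since $f$ is quasi-separated and of finite type, $\Delta_f$ is quasi-compact and of finite type. I would then invoke the fact that a quasi-compact proper monomorphism of algebraic spaces is a closed immersion (e.g., by combining \cite[\href{https://stacks.math.columbia.edu/tag/082J}{Tag 082J}]{stacks-project} with \cite[18.12.6]{EGAIV.4}), reducing the problem to showing that $\Delta_f$ is universally closed.

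Next I would verify universal closedness of $\Delta_f$ using the valuative criterion for properness. Given a trait $T$ with generic point $\eta$ and closed point $t$, and morphisms $a, b \colon T \to X$ with a chosen $2$-iso $\gamma \colon f \circ a \simeq f \circ b$ (set $g := f \circ a$) and a compatible $2$-iso $\delta_\eta \colon a|_\eta \simeq b|_\eta$, the task is to produce a dominant trait extension $T' \to T$ and a $2$-iso $\delta' \colon a|_{T'} \simeq b|_{T'}$ restricting to $\delta_\eta|_{\eta'}$. As a preparatory step, \autoref{lemma:radicial} (whose hypotheses are precisely condition~(3) of \autoref{theorem:valuative}) shows that every geometric fiber of $f$ has at most one geometric point; after passing to a finite separable extension of the residue field at $t$ (which will be absorbed into $T'$ later), one extracts a compatible $2$-iso $\delta_t \colon a|_t \simeq b|_t$.

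The main step is to apply the valuative criterion for radimmersions itself to the map $g \colon T \to Y$, equipped with the lift data $j_\eta := a|_\eta$, $j_t := a|_t$ and $2$-isos $\gamma_\eta, \gamma_t$ inherited from $a$. This yields a dominant trait extension $T' \to T$ and a unique $h \colon T' \to X$ extending the data; clearly $a|_{T'}$ is one such $h$, and using $\delta_\eta$ and $\delta_t$ to twist the $2$-isos, $b|_{T'}$ is another. Uniqueness of $h$ then supplies the required $\delta'$, and universal closedness of $\Delta_f$ follows. The hard part will be the bookkeeping of $2$-morphisms: one must verify that $b|_{T'}$, paired with the $2$-isos obtained from $\delta_\eta|_{\eta'}$, $\delta_t|_{t'}$, and $\gamma$, genuinely represents the same lift diagram as $a|_{T'}$, so that uniqueness of $h$ in the valuative criterion can legitimately be invoked to produce a $2$-iso compatible with $\delta_\eta$. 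Once this compatibility is confirmed, $\Delta_f$ is universally closed, hence a closed immersion, and $f$ is separated.
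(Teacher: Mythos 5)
Your outline is correct and it uses the same two essential ingredients as the paper's proof --- \autoref{lemma:radicial} to force agreement at the closed point and the uniqueness clause of \autoref{definition:valuative} --- but the scaffolding you build around them is different. The paper reduces directly to the uniqueness part of the valuative criterion for separatedness (via \cite[\href{https://stacks.math.columbia.edu/tag/0E80}{Tag 0E80}]{stacks-project}): given two lifts $h_1, h_2: T' \to X$ agreeing at $\eta'$, it shows via \autoref{lemma:radicial} that they also agree at $t'$, and then applies uniqueness in the radimmersion criterion. You instead pass to the relative diagonal $\Delta_f$, argue it is a quasi-compact monomorphism, invoke the valuative criterion for universal closedness of $\Delta_f$, and then invoke Zariski's main theorem plus \cite[18.12.6]{EGAIV.4} to get a closed immersion. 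This works, but it carries extra overhead: you must check quasi-compactness of $\Delta_f$, justify that a proper monomorphism is a closed immersion, and --- as you correctly flag --- keep careful track of more $2$-morphism data ($\gamma$, $\delta_\eta$, $\delta_t$) when invoking uniqueness. The paper's route is a strict simplification because it never leaves the ``two competing lifts'' setup of the uniqueness criterion. Two small remarks: first, your derivation of ``$\Delta_f$ is a monomorphism'' from faithfulness is correct but the universal homeomorphism on isotropy groups is not actually needed there; representability of $f$ (taken as a hypothesis in the lemma) already implies $\Delta_f$ is a monomorphism. Second, when you ``extract a compatible $2$-iso $\delta_t: a|_t \simeq b|_t$,'' the cleanest argument is: pass to the algebraic closure $\bar t$, observe that $a|_{\bar t}$ and $b|_{\bar t}$ together with $\gamma|_{\bar t}$ define two $\bar t$-points of the fiber $X_{g(\bar t)}$, which by \autoref{lemma:radicial} and \cite[Theorem 6.4.1]{olsson2016algebraic} is a scheme with a unique geometric point, so the two $\bar t$-points coincide; this coincidence, read through the universal property of the fiber product, is exactly the required $2$-iso compatible with $\gamma$. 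This is the same mechanism the paper uses, just stated for the pair $(a,b)$ rather than $(h_1,h_2)$. With these points addressed, the $2$-morphism bookkeeping you worry about goes through, and there is no gap.
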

\begin{proof}
By
\cite[\href{https://stacks.math.columbia.edu/tag/0E80}{Tag 0E80}]{stacks-project}
to show $f$ is separated, 
it suffices to verify the uniqueness part of the valuative criterion for discrete
	valuation rings.
	So, suppose we are given some dominant map of traits $T' \ra T$ and two maps $h_1: T' \ra X$ and $h_2 : T' \ra X$ (in place of $h$)
	making the first diagram in \eqref{equation:extension} $2$-commute.
We claim the second diagram in \eqref{equation:extension} also $2$-commutes.
First, observe that by \autoref{lemma:radicial}
	for any geometric point $y : \spec k \ra Y$, the fiber of $f$,
	$X_y := X \times_{Y,y} \spec k$, is $0$-dimensional and quasi-separated.
Therefore, $X_y$ is a scheme by \cite[Theorem 6.4.1]{olsson2016algebraic}.
So, to show the maps $\alpha_1: t' \ra T' \xra{h_1} X_y$ and $\alpha_2: t' \ra T' \xra{h_2} X_y$ agree, it suffices to show their images map to the same geometric point.
	This follows from \autoref{lemma:radicial}, because
	$f \circ \alpha_1\simeq f \circ \alpha_2$ by $2$-commutativity of the right diagram of \eqref{equation:extension}.
	Hence, it follows that both diagrams in \eqref{equation:extension} commute, and so $h_1$ agrees with $h_2$ by the uniqueness aspect of the valuative criterion for radimmersions for traits.
\end{proof}
We now deduce that morphisms of algebraic spaces satisfying the valuative criterion for radimmersions for traits are representable by schemes.
\begin{lemma}
	\label{lemma:schematic}
	Suppose $f: X \ra Y$ is a finite type quasi-separated morphism of algebraic spaces with $Y$ a locally noetherian scheme,
	and suppose $f$ satisfies the valuative criterion for radimmersions for traits.
	Then $X$ is in fact a scheme.
\end{lemma}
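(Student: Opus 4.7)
The plan is to reduce the statement to Zariski's main theorem for algebraic spaces, exactly as was done in the proof of \autoref{lemma:closed-immersion-criterion}. Because $f$ is a morphism of algebraic spaces with $Y$ a scheme, $f$ is automatically representable, so the hypotheses of \autoref{lemma:radicial} and \autoref{lemma:separated} apply to $f$. From \autoref{lemma:radicial}, every geometric fiber of $f$ has at most one geometric point. Combined with the finite type hypothesis, this forces each fiber to be a finite type algebraic space over a field with at most one point, so $f$ is quasi-finite. From \autoref{lemma:separated}, $f$ is separated.

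Thus $f$ is a separated, finite type, quasi-finite morphism of algebraic spaces with $Y$ a locally noetherian scheme. To conclude that $X$ is a scheme, the question is Zariski local on $Y$, so we may assume $Y$ is affine and noetherian. By Zariski's main theorem for algebraic spaces in the form of \cite[\href{https://stacks.math.columbia.edu/tag/082J}{Tag 082J}]{stacks-project}, the map $f$ admits a factorization $X \xra{j} Z \xra{\pi} Y$ with $j$ a quasi-compact open immersion and $\pi$ finite. Since $\pi$ is finite and $Y$ is affine, $Z$ is affine and in particular a scheme. Therefore $X$ is an open subspace of the scheme $Z$, and so is itself a scheme.

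The main content is really in the two preceding lemmas; once radicial-on-fibers and separated are in hand, quasi-finiteness and the appeal to Zariski's main theorem are routine. The only step requiring any attention is verifying that having at most one geometric point in each fiber together with the finite type hypothesis truly gives quasi-finiteness, but this is immediate because a finite type algebraic space over a field with a single geometric point has dimension zero.
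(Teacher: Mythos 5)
Your proposal is correct and takes essentially the same approach as the paper: both reduce to \autoref{lemma:radicial} and \autoref{lemma:separated} to get quasi-finiteness and separatedness, and then apply the same version of Zariski's main theorem, \cite[\href{https://stacks.math.columbia.edu/tag/082J}{Tag 082J}]{stacks-project}. The paper phrases the final step as ``$f$ is quasi-affine, hence representable by schemes,'' whereas you reduce to $Y$ affine and observe that $X$ is an open subspace of the affine scheme $Z$; these are the same idea.
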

\begin{proof}
	By \autoref{lemma:radicial}, $f$ is radicial, hence quasi-finite. By \autoref{lemma:separated} $f$ is separated.
	Hence, by
	\cite[\href{https://stacks.math.columbia.edu/tag/082J}{Tag 082J}]{stacks-project},
	(a variant of Zariski's main theorem,)
	$f$ is quasi-affine, and therefore $f$ is a representable by schemes.
	Therefore, $X$ is a scheme.
\end{proof}

We next state a lemma with the goal of establishing $(1) \implies (2)$ in \autoref{theorem:valuative}.
\begin{lemma}
	\label{lemma:radimmersions-satisfy-valuative-spaces}
	If $f: X \ra Y$ is a radimmersion of schemes with $Y$ a locally noetherian scheme, then $f$ satisfies the valuative criterion for traits with $T=T'$.
\end{lemma}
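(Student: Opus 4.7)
The plan is to exploit the factorization $X \xra{\iota} U \xra{j} Y$ guaranteed by \autoref{definition:radimmersion}, with $j$ an open immersion and $\iota$ finite radicial. The first observation is that $g \colon T \ra Y$ automatically factors through $U$: the generic point $\eta$ and closed point $t$ both map into $U$ (since $j_\eta$ and $j_t$ factor through $X$, hence through $U$), and the only open subset of $T$ containing $t$ is $T$ itself, so $g(T) \subset U$. This gives a unique $\bar g \colon T \ra U$ with $g = j \circ \bar g$, and reduces the problem to producing a unique lift $h \colon T \ra X$ of $\bar g$ along the finite radicial morphism $\iota$.

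Writing $T = \spec R$ for a DVR $R$ with residue field $k$ and fraction field $K$, I would form the pullback $X_T := X \times_U T = \spec A$, where $A$ is a finite $R$-algebra. Radicialness of $\iota$ implies $\spec(A \otimes_R k)$ has at most one point, and the existence of $j_t$ shows there is at least one, with residue field $k$. Since every maximal ideal of the finite $R$-algebra $A$ lies over $\fm_R$ and these maximal ideals correspond bijectively to the points of $\spec(A \otimes_R k)$, we conclude $A$ is local with residue field $k$. The section $j_\eta$ in turn provides an $R$-algebra map $\varphi \colon A \ra K$.

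The key observation is that the image of $\varphi$ is a finite $R$-subalgebra of $K$, hence consists of elements integral over $R$; since $R$ is a DVR, and therefore integrally closed in $K$, $\varphi$ factors through the inclusion $R \hookrightarrow K$, yielding the desired $R$-algebra map $\psi \colon A \ra R$ and thus the section $h \colon T \ra X_T \ra X$. Compatibility with $j_t$ is automatic since $\psi$ is local (both $A$ and $R$ have residue field $k$) and there is a unique surjective $R$-algebra map from the local ring $A$ to its residue field $k$, which must agree with the one induced by $j_t$. For uniqueness, any two such sections give $R$-algebra maps $A \ra R$ whose compositions with $R \hookrightarrow K$ both recover the unique generic section (unique by radicialness of $A \otimes_R K$), hence must coincide. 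I do not expect a serious obstacle here; the entire content is the interplay between radicialness at the closed point (making $A$ local), the existence of both sections, and the integral closedness of $R$ in $K$.
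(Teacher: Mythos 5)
Your proof is correct and takes essentially the same approach as the paper: factor $f$ as a finite radicial $X\to U$ followed by an open immersion $U\to Y$, note that $T$ factors through $U$ since the local scheme $T$ has its closed point landing in the open $U$, and then use properness of the finite morphism $X\to U$ to produce the lift. The only difference is that you unwind the valuative criterion for properness by hand (via locality of $A=\Gamma(X\times_U T,\sco_{X\times_U T})$ and integral closedness of the DVR $R$ in its fraction field), whereas the paper simply cites the criterion and then invokes radicialness separately to check compatibility with $j_t$.
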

\begin{proof}
	We can factor $f$ as a composition $X \ra U \ra Y$ for $X \ra U$ a finite radicial morphism and $U \ra Y$ an open immersion.
	Suppose we have commutative diagrams as in \eqref{equation:closed-and-generic-point}.
	Because both $t$ and $\eta$ factor through $U \subset Y$, we may replace $Y$ by $U$.
	Then, we may assume the map $f$ is finite, and in particular proper. By the valuative criterion for properness, a morphism $h: T' = T \ra Y$ exists making
	the first diagram in \eqref{equation:extension} commute.
	The second diagram in \eqref{equation:extension} then also commutes since the map $t \ra Y$ is uniquely determined by the composition $t \ra X \xra{f} Y$ 
	by \autoref{lemma:radicial}.
\end{proof}
We now bootstrap the preceding lemma to morphisms of algebraic stacks.
\begin{corollary}
	\label{corollary:radimmersions-satisfy-valuative-stacks}
	If $f: X \ra Y$ is a radimmersion of algebraic stacks with $Y$ locally noetherian, then $f$ satisfies the valuative criterion for radimmersions for traits with $T=T'$.
\end{corollary}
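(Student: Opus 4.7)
The plan is to reduce the stack-theoretic statement to the already-proved scheme-theoretic case \autoref{lemma:radimmersions-satisfy-valuative-spaces} by base-changing along the given map $g : T \ra Y$.

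First, I would factor $f$ as $X \xra{i} U \xra{j} Y$ with $i$ finite radicial and $j$ an open immersion. Then I would argue that $g$ itself factors through $U$. Indeed, the $2$-isomorphisms $\gamma_\eta$ and $\gamma_t$ from \eqref{equation:closed-and-generic-point} show that both $g \circ i_\eta$ and $g \circ i_t$ are $2$-isomorphic to maps through the open substack $U$; since being factored through an open substack is invariant under $2$-isomorphism, the open subscheme $g^{-1}(U) = T \times_Y U \subset T$ contains both the generic and closed points of $T$, and therefore equals $T$. Replacing $Y$ by $U$, we may assume $f$ is finite radicial.

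Next, since radimmersions are representable by schemes, the fiber product $X_T := X \times_Y T$ is a scheme and the base change $f_T : X_T \ra T$ is a finite radicial morphism of schemes, hence in particular a radimmersion of schemes. By the universal property of the fiber product, the pairs $(j_\eta, \gamma_\eta)$ and $(j_t, \gamma_t)$ produce canonical maps $\eta \ra X_T$ and $t \ra X_T$ sitting over the corresponding points of $T$. Applying \autoref{lemma:radimmersions-satisfy-valuative-spaces} to the radimmersion $f_T$ with $T' = T$ yields a unique section $\widetilde{h} : T \ra X_T$ extending this data, and composing with the projection $X_T \ra X$ gives the desired lift $h : T \ra X$.

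For uniqueness up to unique $2$-isomorphism, observe that any other candidate $h' : T \ra X$ (equipped with a $2$-morphism to $g$ compatible with $\gamma_\eta$ and $\gamma_t$) corresponds, by the universal property of $X_T$, to a section of $X_T \ra T$ agreeing with $\widetilde{h}$ at $\eta$ and $t$; the uniqueness clause of \autoref{lemma:radimmersions-satisfy-valuative-spaces} then forces $h'$ to equal $h$. There is no serious obstacle here once the reduction to schemes is made: the only delicate point is bookkeeping with the $2$-isomorphisms $\gamma_\eta$ and $\gamma_t$, but these are absorbed automatically into the universal property of $X_T = X \times_Y T$.
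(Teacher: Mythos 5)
Your proof is correct and takes essentially the same route as the paper: base change along $g : T \ra Y$ to get a scheme $X \times_Y T$ over $T$ (using that radimmersions are representable by schemes and stable under base change), then apply \autoref{lemma:radimmersions-satisfy-valuative-spaces} to get the unique section. The preliminary step where you replace $Y$ by the open substack $U$ is valid (and correctly uses that a trait has no proper open subset containing both its points) but redundant here, since \autoref{lemma:radimmersions-satisfy-valuative-spaces} already handles arbitrary radimmersions of schemes rather than only finite radicial maps.
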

\begin{proof}
	Suppose we are given $2$-commuting diagrams as in \eqref{equation:closed-and-generic-point} with $T$ and $T'$ traits.
	Because $X \ra Y$ is a radimmersion, the fiber product $X \times_Y T$ is a scheme and the resulting map $X \times_Y T \ra T$ is a radimmersion.
	By the universal property of fiber products, we obtain $2$-commuting diagrams
	\begin{equation}
		\label{equation:lift-fiber}
		\begin{tikzcd} 
			\eta \ar {r} \ar {d} & X \times_Y T \ar{r} \ar {d} & X \ar {d} & t \ar{r} \ar{d} & X \times_Y T \ar{r} \ar{d} & X \ar{d}{f} \\
			T \ar {r}{\id} & T \ar{r} & Y & T \ar{r}{\id} & T \ar{r} & Y,
		\end{tikzcd}\end{equation}
		Hence, by applying \autoref{lemma:radimmersions-satisfy-valuative-spaces} to the left squares in the above diagrams, we obtain a unique lift
		$h': T \ra X \times_Y T$ making the squares in \eqref{equation:lift-fiber} $2$-commute.
		This implies there is a unique lift $h: T \ra X$ making \eqref{equation:extension} $2$-commute
		as in the valuative criterion for radimmersions for traits with $T = T'$.
		Specifically, $h$ is the composition of $h'$ with the projection $X \times_Y T \ra X$.
\end{proof}

For the implication $(1) \implies (2)$ we will also need the following verification that the map on isotropy groups is a universal homeomorphism.
\begin{lemma}
	\label{lemma:homeomorphism-isotropy}
	A radimmersion $f: X \ra Y$ of algebraic stacks induces a universal homeomorphism on isotropy groups at each geometric point of $X$.
	\end{lemma}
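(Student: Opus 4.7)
The plan is to use the factorization in \autoref{definition:radimmersion} to reduce to the case of a finite radicial morphism, and then to identify the map on isotropy groups as a base change of the relative diagonal. Concretely, write $f$ as a composition $X \xra{i} U \xra{j} Y$ with $j$ an open immersion and $i$ a finite radicial morphism; as noted after \autoref{definition:radimmersion}, the map $i$ is necessarily representable by schemes. Since $j$ is a monomorphism, it induces isomorphisms on all isotropy groups by \cite[\href{https://stacks.math.columbia.edu/tag/04ZZ}{Tag 04ZZ}]{stacks-project}, so it suffices to show that $i$ induces a universal homeomorphism on isotropy groups at every geometric point of $X$.

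The key claim is that the relative diagonal $\Delta_i: X \ra X \times_U X$ is a universal homeomorphism of algebraic spaces. Since $i$ is finite and representable, it is separated, so $\Delta_i$ is a closed immersion. On the other hand, since $i$ is radicial, pulling back to any scheme mapping to $U$ yields a radicial morphism of schemes, whose diagonal is surjective; hence $\Delta_i$ itself is surjective. A surjective closed immersion is a universal homeomorphism, which establishes the claim.

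Finally, I fix a geometric point $x: \spec k \ra X$ with $u := i \circ x$. The absolute diagonal $\Delta_X: X \ra X \times X$ factors as $X \xra{\Delta_i} X \times_U X \ra X \times X$, where the second map is the base change of $\Delta_U: U \ra U \times U$ along $i \times i$. Pulling this factorization back along $(x,x): \spec k \ra X \times X$ gives
\begin{align*}
	\isom_X(x, x) \ra \spec k \times_{X \times X} (X \times_U X) \ra \spec k.
\end{align*}
A direct fiber product manipulation using $X \times_U X \simeq (X \times X) \times_{U \times U} U$ identifies the middle term with $\spec k \times_{U \times U} U \simeq \spec k \times_U \spec k = \isom_U(u, u)$, and under this identification the first arrow is the base change of $\Delta_i$ along the natural map $\isom_U(u, u) \ra X \times_U X$. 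Since universal homeomorphisms are preserved under base change, $\isom_X(x, x) \ra \isom_U(u, u)$ is a universal homeomorphism, finishing the argument. The only mildly delicate step is the surjectivity of $\Delta_i$, which reduces via representability to the classical characterization of radicial morphisms of schemes via their diagonals.
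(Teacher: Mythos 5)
Your proof is correct. It takes a genuinely different route than the paper's, and the comparison is worth noting. Both proofs begin with the same reduction to the finite radicial case via the factorization $X \to U \to Y$. From there the paper works fiberwise: it forms $X_y := X \times_{f,Y,y} \spec k$, observes that the projection $g: X_y \to \spec k$ is finite radicial and surjective (hence a universal homeomorphism by \cite[18.12.11]{EGAIV.4}), deduces that the section $z: \spec k \to X_y$ induced by $x$ is therefore itself a universal homeomorphism, and finally exhibits $\isom_X(x,x) \to \isom_Y(y,y)$ as a base change of $z$ via an explicit cartesian diagram. You instead work globally with the relative diagonal $\Delta_i: X \to X \times_U X$, show it is a surjective closed immersion (using separatedness of finite morphisms, the monomorphism property of relative diagonals for representable morphisms, and the characterization of radicial morphisms of schemes via surjectivity of the diagonal, checked on a smooth cover of $X \times_U X$ of the form $X_T \times_T X_T$), and then exhibit the isotropy map as a base change of $\Delta_i$ via the factorization of $\Delta_X$ through $X \times_U X$. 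The two arguments are closely related at the core --- indeed the paper's section $z$ is itself a base change of $\Delta_i$ along $(\id, x): X_y \to X \times_U X$ --- but they lean on different lemmas: yours invokes ``surjective closed immersion $\Rightarrow$ universal homeomorphism,'' the paper's invokes ``section of a universal homeomorphism $\Rightarrow$ universal homeomorphism.'' Your route is arguably the more conceptual one, since it realizes the isotropy comparison directly as a pullback of the relative diagonal; the paper's avoids reasoning about the stack $X \times_U X$ and is perhaps more elementary in its fiberwise setup. One small point to spell out if you write this up: the surjectivity of $\Delta_i$ should be checked after base change along the smooth surjection $X_T \times_T X_T \to X \times_U X$ for $T \to U$ a schematic smooth cover, where the pullback of $\Delta_i$ is identified with the diagonal of the radicial morphism of schemes $X_T \to T$; you gesture at this but it deserves the extra sentence.
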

\begin{proof}
	We can factor $f: X \ra Y$ as $X \ra U \ra Y$ where $X \ra U$ is finite radicial and $U \ra Y$ is an open immersion.
	An open immersion induces an isomorphism on isotropy groups at every point of the source, so it suffices to prove the lemma in the case that $f$ is finite radicial.
	Choose a geometric point $x: \spec k \ra X$ and let $y: \spec k \ra X \ra Y$ denote the composition.
	Let $X_y := X \times_{f,Y,y} \spec k$. The map $x: \spec k \ra X$ and the $2$-morphism $x \circ f \simeq y$ induce a map
	$z : \spec k \ra X_y$.
	Then, we have the following diagram, where all squares are cartesian
	\begin{equation}
		\label{equation:}
		\begin{tikzcd} 
			\isom_X(x,x) \ar {r} \ar {d} &\spec k \ar {d}{z}  & \\
			\isom_Y(y,y) \ar {r}{h} \ar {d}& X_y \ar {r}{g} \ar {d} & \spec k \ar {d}{y} \\
			\spec k \ar {r}{x} & X \ar {r}{f} & Y.
	\end{tikzcd}\end{equation}
	Because $f$ is a finite radicial map, $g: X_y \ra \spec k$ is also finite radicial.
	Additionally, $g$ is surjective because $z$ is a section of $g$.
	So, $g$ is finite radicial and surjective, therefore a universal homeomorphism \cite[18.12.11]{EGAIV.4}.
	Since $z: \spec k \ra X_y$ is a section of $g$, it is also a universal homeomorphism.
	Therefore, the map $\isom_X(x,x) \ra \isom_Y(y,y)$,
	which is the base change of $z$ along $h$, is also a universal homeomorphism.
\end{proof}

The following lemma will be useful for reducing the implication $(3) \implies (1)$ to the case that $Y$ is an algebraic space.
\begin{lemma}
	\label{lemma:local-valuative}
	Suppose $\alpha : \scx \ra \scy$ is a finite type quasi-separated representable morphism of algebraic stacks with $Y$ locally noetherian
	and suppose $\alpha$ satisfies \autoref{theorem:valuative}(3).
	Then, for any scheme $Z$, the base change map $\scx \times_\scy Z \ra Z$ also satisfies the valuative criterion for radimmersions for traits.
\end{lemma}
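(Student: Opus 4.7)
The plan is a direct reduction: given the valuative data for the base change $\scx\times_\scy Z \to Z$, compose with $Z \to \scy$ to get valuative data for $\alpha$ itself, apply the hypothesis on $\alpha$ to obtain a lift over some dominant $T' \to T$, and then use the universal property of the fiber product to package this lift back into a lift to $\scx \times_\scy Z$. No new geometric input beyond the hypothesis on $\alpha$ should be needed; the difficulty is purely a matter of bookkeeping $2$-isomorphisms.

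In more detail, let $T$ be a trait with closed point $t$ and generic point $\eta$, let $g: T \to Z$ be any map, and suppose we are given $2$-commutative diagrams as in \eqref{equation:closed-and-generic-point} exhibiting lifts of $g|_\eta$ and $g|_t$ to $\scx \times_\scy Z$. Let $p: \scx \times_\scy Z \to \scx$ denote the projection, and let $q: Z \to \scy$ denote the map defining the fiber product. Composing the given lifts with $p$ produces lifts of $q\circ g|_\eta$ and $q\circ g|_t$ to $\scx$, equipped with $2$-isomorphisms to $\alpha \circ p \circ (\text{lift})$ that, via the fiber product structure, are compatible with $q\circ g$. Applying \autoref{theorem:valuative}(3) for $\alpha$ to the map $q\circ g: T \to \scy$ yields a dominant map of traits $T' \to T$ and a unique morphism $h: T' \to \scx$ fitting into the diagrams \eqref{equation:extension}, compatibly with the chosen $2$-isomorphisms.

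Now the universal property of $\scx \times_\scy Z$ applied to the triple consisting of $h: T' \to \scx$, the composition $T' \to T \xra{g} Z$, and the $2$-isomorphism $\alpha\circ h \simeq q\circ g \circ (T'\to T)$ provided by $h$ satisfying the valuative criterion, produces a morphism $\widetilde{h}: T' \to \scx \times_\scy Z$. By construction $\widetilde{h}$ lifts the given data at both $t'$ and $\eta'$ compatibly with the chosen $2$-isomorphisms, so the existence portion of the valuative criterion is verified for $\scx\times_\scy Z \to Z$. For uniqueness, any two such lifts $\widetilde{h}_1, \widetilde{h}_2: T' \to \scx \times_\scy Z$ project under $p$ to two lifts of $q\circ g$ to $\scx$, both satisfying the hypotheses of the valuative criterion for $\alpha$; by uniqueness there, $p\circ \widetilde{h}_1 \simeq p\circ \widetilde{h}_2$, and the universal property of $\scx \times_\scy Z$ then forces $\widetilde{h}_1 \simeq \widetilde{h}_2$.

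The main obstacle, such as it is, lies in carefully threading the $2$-isomorphisms $\gamma_\eta, \gamma_t$ through the construction so that the lift $\widetilde{h}$ genuinely satisfies the compatibility condition ``$2$-commutes compatibly with the above choices of $\gamma_\eta$ and $\gamma_t$'' demanded by \autoref{definition:valuative}. This is purely a diagrammatic exercise using that the fiber product of algebraic stacks is a $2$-categorical limit, and no further hypothesis on $\alpha$ or $Z$ is needed beyond what is in the statement.
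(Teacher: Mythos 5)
Your existence argument is essentially the paper's: push the valuative data forward along $q: Z \to \scy$, invoke the criterion for $\alpha$ to get a dominant $T'\to T$ and a map $h: T' \to \scx$, then assemble $h$ together with $T' \to T \xra{g} Z$ and the $2$-isomorphism coming from the criterion into a map $\widetilde h: T' \to \scx\times_\scy Z$ by the universal property of the $2$-fiber product.

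Your uniqueness argument, however, departs from the paper's and has a real gap. You observe that two solutions $\widetilde h_1, \widetilde h_2 : T' \to \scx\times_\scy Z$ project under $p$ to two solutions for $\alpha$, conclude $p\circ \widetilde h_1 \simeq p\circ\widetilde h_2$ from the uniqueness clause for $\alpha$, and then assert ``the universal property of $\scx\times_\scy Z$ then forces $\widetilde h_1 \simeq \widetilde h_2$.'' This last step does not follow. Via the universal property, a map $T'\to \scx\times_\scy Z$ is a triple $(a, b, \theta)$ with $\theta: \alpha\circ a \simeq q\circ b$, and two triples define the same point of the algebraic space $\scx\times_\scy Z$ only if there is an isomorphism of the $a$-components that is \emph{compatible with the two $\theta$'s}. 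The uniqueness clause for $\alpha$ gives you a $2$-isomorphism $\phi: p\circ\widetilde h_1 \simeq p\circ\widetilde h_2$ pinned down at $\eta'$ and $t'$; whether the resulting $2$-automorphism $\theta_2\cdot(\alpha\phi)\cdot\theta_1^{-1}$ of $q\circ b$ over all of $T'$ is the identity is a separatedness question about $\isom_\scy(q\circ b, q\circ b)\to T'$, which is not among your hypotheses (nothing forces $\scy$ to have separated diagonal). Characterizing this as ``purely a diagrammatic exercise \ldots{} no further hypothesis \ldots{} is needed'' is exactly the misstep. The paper avoids this entirely: it notes that any solution sends $\eta'$ to the one geometric point of $\scx\times_\scy Z$ over the image of $\eta'$ (using \autoref{lemma:radicial}, and using that $\scx\times_\scy Z$ is an algebraic space so morphisms from $\eta'$ form a set), and then invokes \autoref{lemma:separated} to conclude $\scx\times_\scy Z\to Z$ is separated, whence uniqueness by the valuative criterion for separatedness. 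You should replace your universal-property step with that separatedness argument.
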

\begin{proof}
	Given any map $T \ra Z$ from a trait $T$, making \eqref{equation:closed-and-generic-point} commute with $X = \scx \times_\scy Z$ and $Y = Z$,
	we wish to show there is a unique dominant map of traits $T' \ra T$ and $h: T' \ra \scx \times_\scy Z$ making the resulting diagrams
	in \eqref{equation:extension} commute.
	Since $\scx \ra \scy$ satisfies the valuative criterion for radimmersions for traits, we obtain a map $T' \ra \scx$ making
	the diagrams in \eqref{equation:extension} associated to the map $\alpha: \scx \ra \scy$ $2$-commute.
	Since we are also given a map $T' \ra Z$, we obtain the desired map $h: T' \ra \scx \times_\scy Z$ making
	\eqref{equation:extension} commute.
	We only need verify uniqueness of the map $h$.
	There is a unique geometric point $q$ of $\scx \times_\scy Z$ over the image of $\eta'$ in $Z$ by \autoref{lemma:radicial}.
	This implies that any such map $h$ must send $\eta' \mapsto q$.
	From the valuative criterion for separatedness, to show the map $h$ is unique, it suffices to show
	$\scx \times_\scy Z \ra Z$ is separated.
	This holds by \autoref{lemma:separated} because $\scx \times_\scy Z \ra Z$ is the base change of the separated map $\scx \ra \scy$.
\end{proof}

We will also need a lemma which allows us to check that a morphism is a radimmersion fppf locally.
\begin{lemma}
	\label{lemma:radimmersion-local}
	Suppose $f : X \to Y$ is map of schemes and $g:Z \to Y$ is an fppf map of schemes.
	If $X \times_Y Z \to Z$ is a radimmersion then so is $f$.
\end{lemma}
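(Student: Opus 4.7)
The plan is to descend, via fppf descent, a radimmersion factorization of $f' : X \times_Y Z \to Z$ to a radimmersion factorization of $f$. Write $q : Z \to Y$ for the given fppf morphism, and $p_1, p_2 : Z \times_Y Z \to Z$ for the two projections. Because being of finite type, separated, quasi-finite, and radicial all descend along fppf morphisms, $f$ inherits these properties from $f'$, which I would record at the outset.

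The heart of the argument is to descend an appropriate open subscheme. Choose $U_Z \subset Z$ canonically as $U_Z := Z \setminus \bigl(\overline{f'(X \times_Y Z)} \setminus f'(X \times_Y Z)\bigr)$, the largest open of $Z$ in which the image $f'(X \times_Y Z) = q^{-1}(f(X))$ is closed; this open contains the image of $f'$, and the restricted map $X \times_Y Z \to U_Z$ is finite radicial. By fppf descent of open subschemes, to descend $U_Z$ to an open $U \subset Y$ it suffices to check $p_1^{-1}(U_Z) = p_2^{-1}(U_Z)$ inside $Z \times_Y Z$. Since $p_1, p_2$ are fppf and in particular open, a short topological argument gives $p_i^{-1}(\overline{A}) = \overline{p_i^{-1}(A)}$ for any subset $A \subset Z$. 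Applying this to $A = f'(X \times_Y Z)$ reduces the required equality to $p_1^{-1}(f'(X \times_Y Z)) = p_2^{-1}(f'(X \times_Y Z))$, which follows immediately from the identity $f'(X \times_Y Z) = q^{-1}(f(X))$.

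Having obtained an open $U \subset Y$ with $q^{-1}(U) = U_Z$, surjectivity of $q$ forces $f(X) \subset U$, so $f$ factors as $X \to U \to Y$. Base changing the map $X \to U$ along the fppf cover $U_Z \to U$ recovers $X \times_Y Z \to U_Z$, which is finite radicial by hypothesis; since finite radicial morphisms descend fppf-locally on the target, $X \to U$ is finite radicial, exhibiting $f$ as a radimmersion.

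The main obstacle is verifying the cocycle condition for $U_Z$ in the second paragraph. Without both the canonical choice of $U_Z$ and the topological identity $g^{-1}(\overline{A}) = \overline{g^{-1}(A)}$ for open maps $g$, there is no reason for the opens arising from the given radimmersion factorization of $f'$ to satisfy fppf descent; after one commits to this canonical open, the rest of the argument reduces to standard descent statements for properties of morphisms.
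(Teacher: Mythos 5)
Your argument is correct in outline, but it takes a noticeably longer route than the paper, and it has one unaddressed step. The paper's proof sidesteps descent of open subschemes entirely: starting from a factorization $X \times_Y Z \to U' \to Z$ with the first map finite radicial and the second an open immersion, it takes $U := g(U') \subset Y$, which is \emph{automatically open} because $g$, being flat and locally of finite presentation, is an open map. Then $f(X) \subset U$ follows from $g^{-1}(f(X)) = f'(X \times_Y Z) \subset U'$ and surjectivity of $g$, and $U' \to U$ is an fppf cover; base-changing $X \to U$ along it recovers $X \times_Y Z \to U'$, which is finite radicial by hypothesis, so $X \to U$ is finite radicial by fppf descent. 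In other words, there is no need to replace $U'$ by a canonical open or to verify a cocycle condition---one simply pushes the given open $U'$ forward to $Y$. Your approach instead enlarges $U'$ to the canonical $U_Z := Z \setminus (\overline{A} \setminus A)$, $A = f'(X\times_Y Z)$, precisely so that $U_Z$ satisfies the cocycle condition $p_1^{-1}(U_Z) = p_2^{-1}(U_Z)$, which you then verify via the identity $g^{-1}(\overline{A}) = \overline{g^{-1}(A)}$ for open $g$. That cocycle computation is fine, but it is extra work that the paper's choice of $U$ makes unnecessary.

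The step you leave unjustified is the assertion that the restricted map $X \times_Y Z \to U_Z$ is finite radicial. What you are given is a factorization $X \times_Y Z \to U' \to Z$ with $X \times_Y Z \to U'$ finite radicial, and you observe (correctly, but implicitly) that $U' \subset U_Z$. It does not follow formally that composing a finite morphism with the open immersion $U' \hookrightarrow U_Z$ yields a finite morphism. What saves you is that, by the defining property of $U_Z$, the image $A$ of $X \times_Y Z$ is \emph{closed} in $U_Z$; from this one can show $X \times_Y Z \to U_Z$ is universally closed (a closed subset of $V \cap U'$ contained in the closed $A_V \subset V$ is closed in $V$, for any base change $V$), hence proper, and since it is also quasi-finite and separated, it is finite by Zariski's main theorem. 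This is a genuine lemma---``a radimmersion becomes finite over the largest open in which its image is closed''---and should be stated and proved, or cited. Once that is supplied, your proof is complete, but the paper's route avoids the issue altogether by never changing the open $U'$.
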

\begin{proof}
	Let $X \times_Y Z \to U' \to Z$ be a factorization of $X \times_Y Z \to Z$ as the composition of a finite radicial map
	and an open immersion.
	Let $U \subset Y$ denote the image of $U'$, which is open because $g$ is flat and locally of finite presentation.
	Note that $f: X \to Y$ necessarily factors through $U$ because $X \times_Y Z \to Z$ factors through $Z$ and $g$ is fppf.
	To conclude, it suffices to check $X \to U$ is finite radicial because the properties of being finite and radicial
	can be checked fppf locally on the target. 
	Since $U' \to U$ is fppf, it is enough to check $X \times_U U' \to U'$ is finite radicial.
	However, there is an isomorphism $X \times_U U' \simeq X \times_Y Z$ identifying the map $X \times_U U' \to U'$ with the map $X \times_Y Z \to U'$,
	and so $X \times_U U' \to U'$ is indeed finite radicial.
\end{proof}

The following lemma will let us deal with the important special case that $Y$ is a strictly henselian scheme.
\begin{lemma}
	\label{lemma:strictly-henselian-case}
	Suppose $f: X \to Y$ is a finite type quasi-separated map of schemes with $Y$ a strictly henselian Noetherian local scheme whose closed point lies in the image of $f$.
	If $f$ satisfies the valuative criterion for radimmersions for traits then $f$ is finite radicial.
\end{lemma}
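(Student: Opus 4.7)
The plan is to combine Zariski's main theorem with the structure of finite schemes over a strictly henselian local ring, using the valuative criterion to control how the extra open locus coming from Zariski's theorem interacts with the connected component decomposition.

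First, I would dispatch the preliminaries. By \autoref{lemma:radicial} (with representability automatic and the isotropy condition trivial for maps of schemes), every geometric fiber of $f$ has at most one geometric point, so $f$ is radicial, and combined with finite type this gives quasi-finiteness. By \autoref{lemma:separated}, $f$ is separated. Applying Zariski's main theorem to the separated, quasi-finite, finite type morphism $f$ of Noetherian schemes yields a factorization $X \hookrightarrow X' \xra{g} Y$ with the first arrow an open immersion and $g$ finite. Since $Y$ is strictly henselian local and $g$ is finite, $X'$ decomposes as a finite disjoint union $X' = X'_1 \sqcup \cdots \sqcup X'_n$ of local schemes $X'_i = \spec B_i$, with closed points $x'_i$ all lying above $y_0$. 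Radicialness, together with the assumption that $y_0 \in \im(f)$, singles out a unique preimage $x_0 \in X$ of $y_0$; after relabeling, $x_0 = x'_1$.

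The key claim is that every $x \in X$ specializes to $x_0$ inside $X$. Setting $y := f(x)$ and using that $y \leadsto y_0$ in the local scheme $Y$, one constructs a discrete valuation ring $R$ together with a map $\spec R \ra Y$ sending the generic point to $y$ and the closed point to $y_0$. After possibly enlarging $R$, one may arrange that $\kappa(x)$ embeds in $\mathrm{Frac}(R)$ and $\kappa(x_0)$ embeds in the residue field of $R$ (using that $\kappa(x_0)/\kappa(y_0)$ is purely inseparable, which follows from radicialness), so that the generic point of $\spec R$ lifts to $x \in X$ and the closed point lifts to $x_0 \in X$. Applying the valuative criterion for radimmersions for traits then produces a dominant map of traits $T' \ra \spec R$ and a morphism $T' \ra X$, realizing a specialization $x \leadsto x_0$ inside $X$. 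Since topological specialization in $X$ is induced from specialization in $X'$, which respects the connected-component decomposition, this forces $x \in X'_1$. Hence $X \subset X'_1$, and since $X$ is an open subscheme of the local scheme $X'_1$ containing its closed point $x_0$, we must have $X = X'_1$. Therefore $f$ factors as the clopen immersion $X'_1 \hookrightarrow X'$ followed by the finite map $g$, so $f$ is finite (and already known to be radicial).

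The main obstacle is the construction, in the specialization step, of a discrete valuation ring $R$ whose fraction field contains $\kappa(x)$ and whose residue field contains $\kappa(x_0)$, compatibly with the chosen map $\spec R \ra Y$. This is standard but technical: one may start from the local ring of the scheme-theoretic closure of $y$ in $Y$ at $y_0$, dominate it by a discrete valuation ring in a suitable extension of its fraction field, and then further enlarge the residue field if needed. Once this DVR is in hand, the rest of the argument is a clean combination of Zariski's main theorem with the splitting of finite algebras over strictly henselian local rings.
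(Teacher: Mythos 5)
Your proposal is correct and follows essentially the same route as the paper: reduce to radicial, quasi-finite, and separated via \autoref{lemma:radicial} and \autoref{lemma:separated}; apply Zariski's main theorem to factor $f$ as an open immersion into a finite $Y$-scheme; decompose that finite scheme over the strictly henselian local base as a disjoint union of local schemes; use the valuative criterion to show $X$ lands in a single component; and conclude $X$ equals that component because it is an open subscheme of a local scheme containing the closed point. The paper phrases the key step as ``every $w \in X$ lies in the same component $C_1$ as the distinguished point $x$ over the closed point of $Y$,'' which is the same as your specialization claim.

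The one place you are more careful than the paper is the construction of the trait. The paper constructs $T \to Y$ via \cite[\href{https://stacks.math.columbia.edu/tag/054F}{Tag 054F}]{stacks-project} hitting $f(w)$ at the generic point and $f(x)$ at the closed point, and then immediately invokes the valuative criterion. But the criterion, as stated in \autoref{definition:valuative}, requires the two $2$-commutative squares of \eqref{equation:closed-and-generic-point} as input, i.e.\ genuine lifts $j_\eta: \eta \to X$ and $j_t : t \to X$, and these are not automatic: one needs $\kappa(w)$ to embed in $\kappa(\eta)$ and $\kappa(x)$ to embed in $\kappa(t)$ compatibly with the maps to $Y$. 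Since $f$ is radicial of finite type, both residue field extensions over $Y$ are finite purely inseparable, so one can always dominate the DVR by a further DVR whose fraction and residue fields are large enough; your explicit mention of this enlargement step addresses a gap the paper leaves implicit. Beyond that polish, the two arguments are the same.
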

\begin{proof}
	From \autoref{lemma:radicial}, $f$ is radicial, hence quasi-finite. From \autoref{lemma:separated}
	$f$ is separated.
	So, by Zariski's main theorem \cite[18.12.13]{EGAIV.4}, we find $f$ can be factored as $X \ra Z \ra Y$ with $\alpha: X \ra Z$ an open immersion and $\beta: Z \ra Y$ finite.
	By 
	\cite[18.5.11(a)]{EGAIV.4},
	$Z$ is a disjoint union $C_1 \coprod \cdots \coprod C_r$ with $C_i = \spec R_i$, for $R_i$ a local ring.
	We next show that, after possibly relabeling the $C_i$, $\alpha$ factors through $C_1$.
	Chose a point $x \in X$ with $f(x)$ mapping to the closed point of $Y$.
	We may assume $x \in C_1$. Choose some $w \in X$ lying in $C_i$ for some $i, 1 \leq i \leq r$. We will show that $i = 1$.
	Since
$Y$ is noetherian, 
	we may construct a trait $T$ whose closed point maps to $f(x)$ and whose generic point maps to $f(w)$
	\cite[\href{https://stacks.math.columbia.edu/tag/054F}{Tag 054F}]{stacks-project}.
	Hence, by the valuative criterion for radimmersions for traits, 
	we can find an extension $T' \ra T$ of traits whose closed point maps to $x$ and whose generic point maps to $w$.
	Therefore, $x$ is in the closure of $w$ and $i = 1$.
	We conclude that we $\alpha$ factors through $C_1$, so we may take $\alpha$ to be an isomorphism, as $\im \alpha$ contains the closed point of $C_1$.
	Hence $f$ is finite radicial.
\end{proof}

Combining the above, we now prove \autoref{theorem:valuative}.
\begin{proof}[Proof of \autoref{theorem:valuative}]
	(1) implies (2) by \autoref{corollary:radimmersions-satisfy-valuative-stacks} and \autoref{lemma:homeomorphism-isotropy}. 
	Also,
	(2) implies (3) by definition. 
	So it remains to check (3) implies (1).

	In order to verify $f: X \ra Y$ is a radimmersion, we may do so smooth locally on $Y$ by \autoref{lemma:radimmersion-local}.
	Therefore, 
	for $U \ra Y$ a smooth cover, the map $U \times_Y X \ra U$ again satisfies (3) by \autoref{lemma:local-valuative}.
	Hence, we may assume that $Y$ is a scheme.
	By representability of $f$, $X$ is an algebraic space.
	By \autoref{lemma:schematic}, we find that $X$ is also a scheme.
	Since $Y$ is locally noetherian, and $f$ is finite type, $f$ is in fact finitely presented.

	For any $x \in X$, let $S_x$ denote the spectrum of the strict henselization of $Y$ at $f(x)$.
	By \autoref{lemma:strictly-henselian-case}, $X \times_Y S_x \ra S_x$ is a finite radicial map for all $x \in X$. We want to show this implies $f: X \to Y$ is radimmersion.
	Note that $S_x$ is again noetherian by \cite[18.8.8(iv)]{EGAIV.4}.
	Then, since $f$ is finitely presented, and $S_x$ can be expressed as a limit of finite \'etale covers of the local scheme of $Y$ at $f(x)$,
	it follows
	from spreading out for finite morphisms \cite[8.10.5(x)]{EGAIV.3}
	that there is an \'etale neighborhood $S_x'$ of $f(x)$ such that $X \times_Y S_x' \to S_x'$ is finite.
	Define $U := \cup_{x \in X} \im (S_x \to Y)$.
	By fppf descent for finite morphisms 
	\cite[\href{https://stacks.math.columbia.edu/tag/02LA}{Tag 02LA}]{stacks-project}
	it follows that $f: X \ra U$ is finite. 
	Further, $X \ra U$ is radicial because it is so on all fibers over points of $U$.
	Therefore, we find that $f$ factors through $U$, with $U\ra Y$ an open immersion and $X \ra U$ finite radicial.
	Hence, $f$ is a radimmersion.
	\end{proof}

\bibliographystyle{alpha}
\bibliography{/home/aaron/Dropbox/master}

\end{document}